
\documentclass[letterpaper, 12pt]{amsart}

\usepackage{amsmath,amssymb,amsthm,amscd,enumerate,setspace}
\usepackage[all]{xy}
\usepackage{graphicx}
\usepackage{xcolor}

\usepackage[pagebackref]{hyperref}
\hypersetup{colorlinks=true, linkcolor=red, citecolor=blue}


\baselineskip=55pt
\textheight=8.6in
\textwidth=7in
\hoffset=-1in
\voffset=-.5in

\long\def\symbolfootnote[#1]#2{\begingroup%
\def\thefootnote{\fnsymbol{footnote}}\footnote[#1]{#2}\endgroup}

\newtheorem{Theorem}{Theorem}[section]
\newtheorem{Lemma}[Theorem]{Lemma}
\newtheorem{Corollary}[Theorem]{Corollary}
\newtheorem{Proposition}[Theorem]{Proposition}

\newtheorem{Remark}[Theorem]{Remark}

\theoremstyle{definition}
\newtheorem{Example}[Theorem]{Example}
\newtheorem{Definition}[Theorem]{Definition}
\newtheorem{Question}[Theorem]{Question}

\newtheorem{claim}{Claim}

\def\fka{\mathfrak a}

\def\fkm{\mathfrak m}

\def\fkp{\mathfrak p}
\def\fkq{\mathfrak q}

\def\fkP{\mathfrak P}

\def\rmD{\mathrm D}

\def\rmK{\mathrm K}
\def\rmH{\mathrm H}

\newcommand{\calR}{\mathcal{R}}
\def\K{\mathrm{K}}
\def\p{\mathfrak p}
\def\height{\operatorname{ht}}

\newcommand{\rmQ}{\mathrm{Q}}
\def\m{\mathfrak m}

\def\lb{[\![}
\def\rb{]\!]}

\def\Ass{\operatorname{Ass}}
\def\Assh{\operatorname{Assh}}

\def\depth{\operatorname{depth}}
\def\ds{\displaystyle}

\def\Ext{\mbox{\rm Ext}}

\def\h{\operatorname{ht}}
\def\Hom{\operatorname{Hom}}
\def\pd{\operatorname{pd}}

\def\Im{\operatorname{Im}}

\def\ker{\operatorname{Ker}}

\def\Min{\operatorname{Min}}

\def\rmS{\mathrm S}
\def\rmG{\mathrm G}

\def\Spec{\operatorname{Spec}}
\def\Supp{\operatorname{Supp}}

\def\th{\mbox{\tiny th}}


\def\rar{\rightarrow}
\def\lrar{\longrightarrow}
\def\Rar{\Rightarrow}

\def\inj{\hookrightarrow}

\begin{document}

\title{\sc Rings with $q$-torsionfree canonical modules}

\author[N. Endo]{Naoki Endo}
\address{School of Political Science and Economics, Meiji University, 1-9-1 Eifuku, Suginami-ku, Tokyo 168-8555, Japan}
\email{endo@meiji.ac.jp}
\urladdr{https://www.isc.meiji.ac.jp/~endo/}

\author[L. Ghezzi]{Laura Ghezzi}
\address{Department of Mathematics, New York City College of Technology and the Graduate Center, The City University of New York, 300 Jay Street, Brooklyn, NY 11201, U.S.A.; 365 Fifth Avenue, New York, NY 10016, U.S.A.}
\email{lghezzi@citytech.cuny.edu}

\author[S. Goto]{Shiro Goto}
\address{Department of Mathematics, School of Science and Technology, Meiji University, 1-1-1 Higashi-mita, Tama-ku, Kawasaki 214-8571, Japan}
\email{shirogoto@gmail.com}

\author[J. Hong]{Jooyoun Hong}
\address{Department of Mathematics, Southern Connecticut State University, 501 Crescent Street, New Haven, CT 06515-1533, U.S.A.}
\email{hongj2@southernct.edu}

\author[S.-i. Iai]{Shin-ichiro Iai}
\address{Mathematics laboratory, Sapporo College, Hokkaido University of Education, 1-3 Ainosato 5-3, Kita-ku, Sapporo 002-8502, Japan}
\email{iai.shinichiro@s.hokkyodai.ac.jp}

\author[T. Kobayashi]{Toshinori Kobayashi}
\address{Department of Mathematics, School of Science and Technology, Meiji University, 1-1-1 Higashi-mita, Tama-ku, Kawasaki 214-8571, Japan}
\email{toshinorikobayashi@icloud.com}

\author[N. Matsuoka]{Naoyuki Matsuoka}
\address{Department of Mathematics, School of Science and Technology, Meiji University, 1-1-1 Higashi-mita, Tama-ku, Kawasaki 214-8571, Japan}
\email{naomatsu@meiji.ac.jp}

\author[R. Takahashi]{Ryo Takahashi}
\address{Graduate School of Mathematics, Nagoya University, Furocho, Chikusaku, Nagoya, Aichi 464-8602, Japan}
\email{takahashi@math.nagoya-u.ac.jp}

\thanks{2020 {\em Mathematics Subject Classification.} 13H10, 13A02, 13A15.}
\thanks{{\em Key words and phrases.} Canonical module, Gorenstein ring, Cohen-Macaulay ring, 
$q$-torsionfree module, $q$-Gorenstein ring, quasi-normal ring}
\thanks{N. Endo was partially supported by JSPS Grant-in-Aid for Young Scientists 20K14299. 
L. Ghezzi was partially supported by the Fellowship Leave from the New York City College of Technology-CUNY (Fall 2022-Spring 2023) and by a grant from the City University of New York PSC-CUNY Research Award Program Cycle 53.
S. Goto was partially supported by JSPS Grant-in-Aid for Scientific Research (C) 21K03211. 
J. Hong was partially supported by the Sabbatical Leave Program at Southern Connecticut State University (Spring 2022).
T. Kobayashi was partly supported by JSPS Grant-in-Aid for JSPS Fellows 21J00567.
N. Matsuoka was partially supported by JSPS Grant-in-Aid for Scientific Research (C) 18K03227.
R. Takahashi was partially supported by JSPS Grant-in-Aid for Scientific Research (C) 19K03443.}

\maketitle

\setlength{\baselineskip} {15.3pt}

\begin{center} Dedicated to the memory of Wolmer V. Vasconcelos \end{center}

\begin{abstract}
Let $A$ be a Noetherian local ring with canonical module $\rmK_A$. We characterize $A$ when $\rmK_A$ is a torsionless, reflexive, or $q$-torsionfree module for an integer $q \ge 3$. If $A$ is a Cohen-Macaulay ring, H.-B. Foxby proved in 1974 that the $A$-module $\rmK_A$ is $q$-torsionfree if and only if the ring $A$ is $q$-Gorenstein. With mild assumptions, we provide a generalization of Foxby's result to arbitrary Noetherian local rings admitting the canonical module.  In particular, 
since the reflexivity of the canonical module is closely related to the ring being Gorenstein in low codimension, we also explore quasi-normal rings, introduced by W. V. Vasconcelos. We provide several examples as well.
\end{abstract}


\section{Introduction}
This paper investigates the question of the structure of a Noetherian local ring $A$ if its canonical module $\rmK_A$ is a torsionless, reflexive, or more generally, $q$-torsionfree $A$-module for an integer $q \ge 3$. 
The notion of $q$-torsionfree modules was one of the central contributions of the famous research of M. Auslander and M. Bridger \cite{AB}, which was succeeded by H.-B. Foxby \cite{F} to be a striking study of $q$-Gorenstein rings. 
Among many interesting results, Foxby settled the above question in the case where $A$ is a Cohen-Macaulay ring. More precisely, the $A$-module $\rmK_A$ is $q$-torsionfree if and only if the ring $A$ is $q$-Gorenstein, i.e., $A_{\fkp}$ is a Gorenstein ring for every $\fkp \in \Spec A$ with $\depth A_{\fkp} < q$ (see \cite[Proposition 3.2]{F1}).
It remains unclear what happens if we do not assume the ring $A$ is Cohen-Macaulay. 
The theory of canonical modules nowadays has been developed mainly over Cohen-Macaulay rings in connection with the Gorenstein property; see e.g., \cite{BH, GW, HK, K, St}.  
However, over Noetherian local (not necessarily Cohen-Macaulay) rings, there are also remarkable preceding researches on canonical modules, including the study of their endomorphism algebras; see \cite{A, AG, BS}. 
Therefore, behaviors of canonical modules, even for non-Cohen-Macaulay rings, are interesting and the $q$-torsionfree property is well worth studying. 
The motivation for the present research started with this question that arose while the second and fourth authors were writing the last paper with Vasconcelos concerning (torsionless) canonical modules \cite{BGHV}. 

To explain our results more precisely, let us start from definitions which we will use throughout this paper. For a Noetherian local ring $A$ of dimension $d$ with maximal ideal $\fkm$, a {\it canonical module} $K$ of $A$ is a finitely generated $A$-module satisfying
\[
\widehat{A} \otimes_{A} K \cong \Hom_{\widehat{A}}(\rmH^d_{\widehat{\fkm}}(\widehat{A}), \widehat{E}) 
\]
where $\rmH^d_{\widehat{\fkm}}(\widehat{A})$ denotes the $d^{\th}$ local cohomology module of the $\fkm$-adic completion $\widehat{A}$ of $A$ with respect to its maximal ideal ${\ds \widehat{\fkm}}$ and ${\ds \widehat{E}}$ is the injective hull of the $\widehat{A}$-module $\widehat{A} / \widehat{\fkm}$ (\cite[Definition 5.6]{HK}). 
Equivalently, a finitely generated $A$-module $K$ is a canonical module of $A$ if ${\ds \Hom_{A}(K, E) \cong \rmH^{d}_{\fkm}(A)}$, where $\rmH^{d}_{\fkm}(A)$ is the $d^{\th}$ local cohomology module of $A$ with respect to $\m$ and ${\ds E}$ is the injective hull of $A/\fkm$ (\cite[Definition 12.1.2, Remarks 12.1.3]{BS}). 
The canonical module $\rmK_A$ is uniquely determined up to isomorphisms (\cite[(1.5)]{A}, see also \cite[Lemma 5.8]{HK}) if it exists. Although the existence is not guaranteed even for Cohen-Macaulay local domains, provided $A$ is Cohen-Macaulay, the ring $A$ admits the canonical module if and only if $A$ is a homomorphic image of a Gorenstein ring (\cite{Re, S2}). 
The fundamental theory of canonical modules over Cohen-Macaulay rings was developed in the monumental book \cite{HK} of J. Herzog and E. Kunz. We shall in this paper freely refer to \cite{HK} for basic results on canonical modules (see \cite[Chapter 3]{BH} also).

We now continue to state our setup. Let $R$ be a Noetherian (not necessarily local) ring. For an $R$-module $M$, we have a canonical homomorphism 
$$
\varphi: M \rar M^{**}
$$ 
defined by ${\ds \big[ \varphi(x) \big](f) = f(x) }$ for each ${\ds f \in M^{*} }$ and $x \in M$, where $(-)^{*} =\Hom_{R}(-, R)$ denotes the $R$-dual functor. We say that $M$ is {\em torsionless} (resp. {\em reflexive}) if $\varphi$ is injective (resp. bijective). Torsionless modules are torsionfree, i.e., there is no nonzero torsion elements, and the converse holds if the total ring of fractions $\rmQ(R)$ of $R$ is Gorenstein (\cite[Theorem (A.1)]{V1}). 
Moreover, the $R$-module $M$ is torsionless (resp. reflexive) if and only if $\Ext_R^i(\rmD(M), R) = (0)$ for $i=1$ (resp. $i=1, 2$), where $\rmD(M)$ denotes the Auslander transpose of $M$ (\cite{AB}). From this point of view, Auslander and Bridger introduced a {\it $q$-torsionfree module} $M$ to be $\Ext_R^i(\rmD(M), R) = (0)$ for all $i=1, 2, \ldots, q$. 
In addition, for an integer $n$, we say that
\begin{itemize}
\item $M$ satisfies  $(\rmS_{n})$ if ${\ds \depth M_{\fkp} \geq \min \left\{n, \, \dim R_{\fkp} \right\} }$ for every $\fkp \in \Spec R$,
\item $M$ satisfies $(\widetilde{\rmS}_{n})$ if ${\ds \depth M_{\fkp} \geq \min \left\{n, \, \depth R_{\fkp} \right\} }$ for every $\fkp \in \Spec R$,
\item $R$ satisfies $(\rmG_{n})$ if $R_{\fkp}$ is Gorenstein for every $\fkp \in \Spec R$ with ${\ds \dim R_{\fkp} \leq n}$,
\item $R$ satisfies $(\widetilde{\rmG}_{n})$ if $R_{\fkp}$ is Gorenstein for every  $\fkp \in \Spec R$ with ${\ds \depth R_{\fkp} \leq n}$.
\end{itemize}
The condition $(\rmS_{n})$ is known as Serre's condition. 
A Noetherian ring satisfying $(\widetilde{\rmG}_{n})$ coincides with $(n+1)$-Gorenstein ring in earlier publications such as  \cite{AB, F}. The condition $(\widetilde{\rmG}_{n})$ is equivalent to saying that the ring satisfies both $(\rmS_{n+1})$ and  $(\rmG_{n})$.  

Let us now state our results, explaining how this paper is organized. 
In Section \ref{TCM}, after recalling the necessary definitions and preliminaries, we give a criterion for a Noetherian local ring $A$ to have the torsionless canonical module. We show that the $A$-module $\rmK_A$ is torsionless if and only if $A_{\p}$ is Gorenstein for every $\p \in \Assh A$, where $\Assh A =\{\p \in \Spec A \mid \dim A/\p = \dim A\} = \Ass_A \rmK_A$ (Proposition \ref{prop2}). 
Section \ref{RCM} is devoted to the characterizations of local rings $A$ with reflexive canonical modules. When $\dim A = 1$, this is exactly the case where $A$ is a Gorenstein ring (Proposition \ref{1.6}). We elaborate on the one-dimensional case in Section \ref{one-dimensional}. For the higher dimensional case, the reflexivity of $\rmK_A$ is characterized by the local ring $A_{\p}$ being Gorenstein for every $\p \in \Supp_A K_A$ with $\dim A_\p \le 1$ and $\Ass A \cap V(U) = \Assh A$, where $U$ denotes the unmixed component of $(0)$ in $A$ and $V(U)$ is the set of all prime ideals in $A$ containing $U$ (Theorem \ref{refcano1}). This lead us to obtain Corollary \ref{refcano2}, which claims that $\rmK_A$ is reflexive if and only if $A$ satisfies $({\rmG}_{1})$, provided $\Assh A = \Ass A$. This indicates that the reflexivity of canonical modules is deeply related to the ring being Gorenstein in low codimension. Thus Section \ref{QNR} is dedicated to quasi-normal rings, i.e., rings with $(\rmS_{2})$ and $(\rmG_{1})$, which have been introduced by Vasconcelos.
In Section \ref{qTFCM}, we generalize Foxby's result on $q$-torsionfree canonical modules to arbitrary Noetherian local rings $A$ admitting a canonical module. Our results of Sections \ref{TCM} and \ref{RCM} provide a complete generalization in case $q=1,2$. When $q \ge 3$, Theorem \ref{p3.2} states that the $A$-module $\rmK_A$ is $q$-torsionfree if and only if the ring $A$ satisfies $(\rmG_{q-1})$ and $(\rmS_{q-1})$ on $\Supp_A \rmK_A$, provided that $\rmK_A$ satisfies $(\rmS_q)$. In the final section we provide concrete examples of Cohen-Macaulay and $q$-Gorenstein rings in order to illustrate our theorems.



\section{Torsionless canonical modules}\label{TCM}

Throughout the section, let $(A, \fkm)$ be a Noetherian local ring of dimension $d$. We begin with some preliminaries. Let ${\ds (0) = \bigcap_{\fkp \in \Ass A } Q(\fkp) }$ denote a primary decomposition of $(0)$ in  $A$. We set
\[  \Assh A= \{ \fkp \in \Spec A \mid \dim A/\fkp =d \} \quad \mbox{and} \quad  U= \bigcap_{\fkp \in \Assh A} Q(\fkp) \]
where $U$ is called  the {\it unmixed component} of $(0)$ in $A$. Let  $V(U)$ denote the set of all prime ideals of $A$ containing $U$.

\begin{Lemma}\label{lem1}
There is an embedding ${\ds 0 \rar A/U \rar A}$ of $A$-modules.
\end{Lemma}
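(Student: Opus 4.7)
The plan is to construct an injection explicitly as multiplication by a suitable element. Fix the primary decomposition $(0) = \bigcap_{\fkp \in \Ass A} Q(\fkp)$ and put
\[
V \;=\; \bigcap_{\fkp \in \Ass A \setminus \Assh A} Q(\fkp),
\]
with the convention $V = A$ if $\Ass A = \Assh A$. Then $U \cap V = (0)$, so $UV = 0$ and every $v \in V$ determines an $A$-linear map $\varphi_v \colon A/U \to A$, $\bar a \mapsto av$. I aim to choose $v$ so that $(0 :_A v) = U$, which is exactly the condition $\ker \varphi_v = 0$.

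By prime avoidance (the finite family $\Assh A$ consists of primes), such a $v$ exists provided $V \not\subseteq \fkp$ for each $\fkp \in \Assh A$. Suppose for contradiction that $V \subseteq \fkp_0$ with $\fkp_0 \in \Assh A$. Primality of $\fkp_0$ yields some $\fkq \in \Ass A \setminus \Assh A$ with $Q(\fkq) \subseteq \fkp_0$; taking radicals gives $\fkq \subseteq \fkp_0$, whence $d = \dim A/\fkp_0 \le \dim A/\fkq \le d$. This forces $\fkq \in \Assh A$, a contradiction.

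With $v \in V$ chosen outside every $\fkp \in \Assh A$, I compute
\[
(0 :_A v) \;=\; \bigcap_{\fkp \in \Ass A} \bigl(Q(\fkp) :_A v\bigr).
\]
For $\fkp \in \Ass A \setminus \Assh A$ we have $v \in Q(\fkp)$ and the colon is $A$; for $\fkp \in \Assh A$ we have $v \notin \sqrt{Q(\fkp)} = \fkp$, so primary-ness of $Q(\fkp)$ yields $(Q(\fkp) :_A v) = Q(\fkp)$. Intersecting over $\Assh A$ gives $(0 :_A v) = U$, so $\varphi_v \colon A/U \hookrightarrow A$ is the desired embedding.

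The only non-routine step is the prime-avoidance argument verifying $V \not\subseteq \fkp$ for every $\fkp \in \Assh A$; it rests on the dimension-theoretic observation that a prime contained in a prime of $\Assh A$ must itself lie in $\Assh A$. Everything else is direct bookkeeping with the primary decomposition.
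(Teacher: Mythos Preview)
Your proof is correct and follows essentially the same approach as the paper's: both set $L=V=\bigcap_{\fkp\in\Ass A\setminus\Assh A}Q(\fkp)$, choose an element of $V$ lying outside every prime in $\Assh A$, and show that multiplication by this element has kernel exactly $U$. You supply more detail than the paper does---in particular the dimension-theoretic justification for $V\not\subseteq\fkp$ and the explicit colon-ideal computation of $(0:_A v)$---but the underlying argument is the same.
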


\begin{proof}
We may assume that $U \neq (0)$. Then $\Assh A \subsetneq \Ass A$. Let 
\[  L = \bigcap_{\tiny \fkp \in \Ass A \setminus \Assh A}Q(\fkp).\]  We then have ${\ds L \not\subseteq \bigcup_{\tiny \fkp \in \Assh A}\fkp}$. Choose an element $a \in L$ but ${\ds a \not\in \bigcup_{\fkp \in \Assh A}\fkp}$. Since $a$ is a non-zerodivisor on $A/U$ and $aU \subseteq L \cap U=(0)$, we obtain $((0):_{A} a) = U$.  Then $U$ is the kernel of the homomorphism $ \varphi : A \rar A$ given by $\varphi(1)=a$.   Thus, $A/U \cong \Im(\varphi) \inj A$.
\end{proof}

In the rest of this section, we assume the ring $A$ admits the canonical module $\rmK_A$. 
We recall several known facts about $\rmK_A$ which we will use throughout this article; see \cite[(1.6), (1.7), (1.8), (1.9), (1.10), Theorem 3.2]{A} and \cite[Korollar 6.3]{HK} (also \cite[Chapter 12]{BS}) for the proofs.

\begin{Proposition}\label{P1}
The following assertions hold true. 
\begin{enumerate}[$(1)$]
\item The annihilator of $\rmK_{A}$ is $U$. In particular, ${\dim_A \rmK_{A}=d}$ and ${\Ass_A \rmK_{A} = \Assh A}$.

\item If ${\ds a \in \fkm}$ is $A$-regular, then $a$ is $\rmK_{A}$-regular.  

\item $V(U) = \Supp_A \rmK_{A} = \{\fkp \in \Spec A \mid \dim A = \dim A/\fkp + \h_A \fkp \}$.

\item Both $\rmK_{A}$ and ${\ds \Hom_{A}(\rmK_{A}, \rmK_{A}) }$ satisfy  $(\rmS_{2})$.

\item ${\ds \rmK_{A_{\fkp}}=\big[\rmK_{A} \big]_{\fkp} }$ for every ${\ds \fkp \in \Supp_A \rmK_{A} }$.

\item $\Supp_A \rmK_{A} = \Spec A$ if and only if $\Min A = \Assh A$. 

\item $\Ass A = \Assh A$ if and only if $\rmK_{A}$ is a faithful $A$-module.

\item Suppose $A$ is Cohen-Macaulay. If $a \in \fkm$ is $A$-regular, then $\rmK_{A/(a)}$ exists and $\rmK_{A/(a)}  \cong \rmK_{A}/a\rmK_{A}$. 

\item Suppose that $\rmK_{A/U}$ exists. Then $\rmK_{A/U}$, as an $A$-module, is the canonical module of $A$.
\end{enumerate}
\end{Proposition}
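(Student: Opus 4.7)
The plan is to reduce every assertion to the complete case via faithfully flat base change along $A \to \widehat{A}$, so that the defining isomorphism $\rmK_A \cong \Hom_A(\rmH^d_\fkm(A), E)$ is directly available. Matlis duality then supplies an annihilator- and support-preserving anti-equivalence between the Noetherian $A$-module $\rmK_A$ and the Artinian $A$-module $\rmH^d_\fkm(A)$, translating most of the proposition into standard statements about the top local cohomology.

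The central input is the identification $\ann_A \rmH^d_\fkm(A) = U$. I would establish this using the primary decomposition $(0) = \bigcap_{\fkp \in \Ass A} Q(\fkp)$: each component $Q(\fkp)$ with $\fkp \notin \Assh A$ gives a quotient $A/Q(\fkp)$ of dimension less than $d$, hence with vanishing top local cohomology by Grothendieck's vanishing theorem, so only the $\Assh$-components survive in $\rmH^d_\fkm(A)$. Item (1) then follows immediately, and (3) is the consequence $\Supp \rmK_A = V(\ann \rmK_A) = V(U)$, together with the standard unmixed-component identity $V(U) = \{\fkp : \h \fkp + \dim A/\fkp = d\}$. Items (6) and (7) are then formal consequences: $V(U) = \Spec A$ iff $U$ lies in every minimal prime iff $\Min A = \Assh A$, and $\rmK_A$ is faithful iff $U = (0)$ iff $(0)$ has no embedded primes, i.e., $\Ass A = \Assh A$.

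Item (2) follows from (1) since any $A$-regular $a \in \fkm$ avoids $\Ass A \supseteq \Assh A = \Ass_A \rmK_A$. For (4), I would apply local duality to prove $\depth_{A_\fkp} (\rmK_A)_\fkp \ge \min\{2, \dim A_\fkp\}$: under Matlis duality at $A_\fkp$, the vanishing of low-degree local cohomology of $\rmK_{A_\fkp}$ corresponds to the top-degree concentration of local cohomology of $A_\fkp$, which holds by construction. The $(\rmS_2)$-property of $\Hom_A(\rmK_A, \rmK_A)$ then follows from applying $\Hom_A(-, \rmK_A)$ to the short exact sequences built around the natural map $A/U \to \Hom_A(\rmK_A, \rmK_A)$ and using the vanishing of low $\Ext^i_A(\rmK_A, \rmK_A)$. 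Item (5) rests on the compatibility of canonical modules with localization at $\fkp \in V(U)$; the support condition from (3) ensures that $\h \fkp + \dim A/\fkp = d$ provides the correct cohomological shift, which after Matlis dualizing yields $\rmK_{A_\fkp} \cong (\rmK_A)_\fkp$.

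The final two items are formal. For (8), the sequence $0 \to A \to A \to A/(a) \to 0$ given by multiplication by $a$, together with the Cohen-Macaulay hypothesis and the $\rmK_A$-regularity from (2), yields $\rmK_{A/(a)} \cong \rmK_A/a\rmK_A$ after taking top local cohomology and Matlis dualizing. For (9), since $\dim U < d$ one has $\rmH^d_\fkm(U) = \rmH^{d+1}_\fkm(U) = 0$ by Grothendieck vanishing, and the long exact sequence attached to $0 \to U \to A \to A/U \to 0$ gives $\rmH^d_\fkm(A) \cong \rmH^d_\fkm(A/U) \cong \rmH^d_{\fkm/U}(A/U)$; Matlis dualizing then identifies $\rmK_{A/U}$ with $\rmK_A$ as $A$-modules. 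The principal obstacle throughout is the annihilator identification $\ann_A \rmH^d_\fkm(A) = U$ underlying (1); once this is in place, the remaining assertions flow from Matlis duality and routine local cohomology manipulations.
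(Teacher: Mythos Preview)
The paper does not prove Proposition~\ref{P1} at all: it simply cites Aoyama \cite[(1.6)--(1.10), Theorem 3.2]{A}, Herzog--Kunz \cite[Korollar 6.3]{HK}, and Brodmann--Sharp \cite[Chapter 12]{BS}. So there is no proof in the paper to compare against; your sketch is in fact closer to a proof than what appears in the text. Your overall strategy---complete, use Matlis duality to trade $\rmK_A$ for $\rmH^d_\fkm(A)$, and read everything off the top local cohomology---is exactly the approach taken in Aoyama's paper, so in that sense you are aligned with the cited source.

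A few places where your sketch is rougher than it should be. For (1), your argument that ``only the $\Assh$-components survive in $\rmH^d_\fkm(A)$'' gives $U \subseteq \ann_A \rmH^d_\fkm(A)$ but not the reverse inclusion; you still need that $\rmH^d_\fkm(A/U)$ is faithful over $A/U$, which is where the real work lies (this is what Aoyama proves). For (4) you invoke local duality at $A_\fkp$ applied to $\rmK_{A_\fkp}$, which presupposes (5); you should reorder and establish (5) first. For (7), the phrase ``$(0)$ has no embedded primes, i.e., $\Ass A = \Assh A$'' conflates two conditions: no embedded primes means $\Ass A = \Min A$, whereas what you actually need (and correctly conclude) is $\Ass A = \Assh A$. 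These are not fatal, but they are the spots where a reader would ask for more.
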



\medskip


By Proposition \ref{P1}-(2), the canonical module $\rmK_A$ is torsionfree as an $A$-module. In general, torsionless modules are torsionfree, and the converse holds if and only if $A_\p$ is a Gorenstein local ring for every $\p \in \Ass A$ (\cite[Theorem (A.1)]{V1}). Therefore, if the total ring of fractions $\rmQ(A)$ of $A$ is Gorenstein, then $\rmK_A$ is torsionless. The following proposition shows the case where $\rmK_{A}$ is torsionless without assuming $\rmQ(A)$ is Gorenstein. It is also a generalization of \cite[Proposition 3.2]{BGHV}.


\begin{Proposition}\label{prop2}
The following conditions are equivalent{\rm \,:}
\begin{enumerate}[{\rm (1)}]
\item $\rmK_{A}$ is a torsionless $A$-module{\rm \,;}
\item $A_{\fkp}$ is a Gorenstein ring for every $\fkp \in \Assh A${\rm \,;}
\item $\rmK_{A} \cong I$ for some ideal $I$ of $A$.
\end{enumerate}
\end{Proposition}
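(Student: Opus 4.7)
The plan is to prove the cyclic chain $(3)\Rightarrow(1)\Rightarrow(2)\Rightarrow(3)$, with most of the work concentrated in the last step.

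The direction $(3)\Rightarrow(1)$ is immediate: an ideal $I$ embeds in the free module $A$, and any submodule of a free module is torsionless, since the coordinate projections separate its points.

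For $(1)\Rightarrow(2)$, I would fix $\fkp\in\Assh A$ and localize. The condition $\dim A/\fkp=d$ forces $\fkp$ to be a minimal prime, so $A_\fkp$ is Artinian local. Since $\rmK_A$ is finitely generated, $\Hom$ commutes with localization, and combined with Proposition \ref{P1}(5) this gives that $\rmK_{A_\fkp}$ is a torsionless $A_\fkp$-module. Over an Artinian local ring the canonical module is the injective hull $E$ of the residue field. A torsionless finitely generated module embeds in some $A_\fkp^n$, and because $E$ is injective the embedding splits, exhibiting $E$ as a summand of a free module, hence as a free $A_\fkp$-module. Comparing lengths via Matlis duality ($\ell(E)=\ell(A_\fkp)$) then forces $E\cong A_\fkp$, so $A_\fkp$ is Gorenstein.

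For the main implication $(2)\Rightarrow(3)$, I would first reduce to the unmixed case $U=(0)$. Proposition \ref{P1}(9) identifies $\rmK_{A/U}$ with $\rmK_A$ as $A$-modules; the sets $\Assh A$ and $\Assh(A/U)$ correspond with $(A/U)_\fkp=A_\fkp$ at each $\fkp\in\Assh A$ (since $U$ localizes to zero at a minimal prime); and Lemma \ref{lem1} turns any embedding of $\rmK_A$ into $A/U$ into one into $A$. Thus I may assume $U=(0)$, so $\Ass A=\Assh A$ and $\rmK_A$ is faithful by Proposition \ref{P1}(7). The total quotient ring is the finite product $\rmQ(A)\cong\prod_{\fkp\in\Assh A}A_\fkp$, which is Gorenstein by hypothesis, and Proposition \ref{P1}(5) yields
\[
\rmK_A\otimes_A\rmQ(A)\;\cong\;\prod_{\fkp\in\Assh A}\rmK_{A_\fkp}\;\cong\;\prod_{\fkp\in\Assh A}A_\fkp\;\cong\;\rmQ(A).
\]
Since $\Ass A=\Ass_A\rmK_A$, every non-zerodivisor on $A$ is a non-zerodivisor on $\rmK_A$, so the localization map $\rmK_A\to\rmK_A\otimes_A\rmQ(A)$ is injective and realizes $\rmK_A$ as an $A$-submodule of $\rmQ(A)$. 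Because $\rmK_A$ is finitely generated, a suitable non-zerodivisor $s\in A$ satisfies $s\rmK_A\subseteq A$, and multiplication by $s$ is injective on $\rmK_A$ by Proposition \ref{P1}(2). Hence $\rmK_A\cong s\rmK_A$, an ideal of $A$.

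The main obstacle is the last implication, and within it the step of placing $\rmK_A$ inside $\rmQ(A)$: this requires both the unmixed reduction (so that the $A$-regular and $\rmK_A$-regular elements coincide) and the product decomposition of $\rmQ(A)$ coming from the Gorenstein hypothesis at the minimal primes. Once $\rmK_A\hookrightarrow\rmQ(A)$ is in hand, the clearing-of-denominators step is routine.
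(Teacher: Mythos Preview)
Your proposal is correct and follows essentially the same route as the paper. The only difference is organizational: for $(2)\Rightarrow(3)$ you first pass to $A/U$ (using Proposition~\ref{P1}(9) and Lemma~\ref{lem1}) and then localize at all non-zerodivisors, whereas the paper localizes directly at $W=A\setminus\bigcup_{\fkp\in\Assh A}\fkp$ to obtain $W^{-1}\rmK_A\cong W^{-1}A\cong W^{-1}(A/U)$, embeds $\rmK_A$ into $A/U$, and then applies Lemma~\ref{lem1}; the underlying mechanism is identical.
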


\begin{proof}
(1) $\Rar$ (2) Since $\rmK_{A}$ is torsionless, there exists an exact sequence $0 \rar \rmK_{A} \rar F$ of $A$-modules, where $F$ is a finitely generated free $A$-module. 
Let ${\fkp \in \Assh A \subseteq \Supp_A \rmK_{A}}$. Then $A_{\fkp}$ is Artinian and $\big[\rmK_A \big]_\fkp$ is the canonical module of  $A_{\fkp}$. 
Therefore, we may assume that $\big[\rmK_A\big]_\fkp$ is the injective hull of $A_\fkp/\fkp A_\fkp$. The splitting monomorphism $0 \rar \big[\rmK_A\big]_{\fkp} \rar F_{\fkp}$ induces that $\big[\rmK_A \big]_{\fkp}$ is  a direct summand of the free $A_\fkp$-module $F_{\fkp}$. Since $A_{\fkp}$ is Artinian, by the Matlis duality, we have $\big[\rmK_A \big]_\fkp \cong A_{\fkp}$. Hence $A_{\fkp}$ is a Gorenstein ring.


\medskip

\noindent 
(2) $\Rar$ (3) Let ${W = A \setminus \bigcup_{\fkp \in \Assh A}\fkp}$. 
By assumption, $\big[\rmK_A \big]_\fkp \cong A_{\fkp}$ for every $\fkp \in \Assh A$. Thus, ${\ds   W^{-1}\rmK_{A} \cong W^{-1}A  }$.  Moreover, we have  ${\ds W^{-1}A \cong W^{-1}(A/U) }$ because  $W^{-1}U=(0)$ by the proof of Lemma \ref{lem1}. Since every element of $W$ is a non-zerodivisor on both $\rmK_{A}$ and $A/U$, the isomorphism ${\ds  W^{-1}\rmK_{A} \cong W^{-1}(A/U) }$  induces the embedding ${\ds \rmK_{A} \inj A/U}$. By Lemma \ref{lem1}, there is an embedding ${\ds \rmK_{A} \inj A}$. 

\medskip 

\noindent (3) $\Rar$ (1) is clear.
\end{proof}

If $A$ is {\it reduced}, which means there are no nonzero nilpotents, then the local ring $A_\p$ is a field for every $\p \in \Ass A$. Hence we obtain the following.

\begin{Corollary}\label{cor3.1} 
 If $A$ is a reduced ring, then $\rmK_{A} \cong  I$ for some ideal $I$ of $A$.
\end{Corollary}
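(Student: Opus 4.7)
The plan is to reduce directly to Proposition \ref{prop2} by verifying condition (2) under the reduced hypothesis. The chain of ideas is short: in a reduced ring, the zero ideal equals the intersection of its minimal primes, so $\Ass A = \Min A$; any localization of a reduced ring remains reduced; and a reduced local ring of dimension zero has its unique prime equal to zero, hence is a field.

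Concretely, I would proceed as follows. First, I would take an arbitrary $\fkp \in \Assh A$, noting that $\Assh A \subseteq \Ass A = \Min A$ since $A$ is reduced. Then $A_\fkp$ is a local ring whose only prime is $\fkp A_\fkp$. Being a localization of the reduced ring $A$, the ring $A_\fkp$ is reduced, so $\fkp A_\fkp = \sqrt{(0)} = (0)$. Hence $A_\fkp$ is a field and in particular Gorenstein. This establishes condition (2) of Proposition \ref{prop2}.

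By the equivalence of (2) and (3) in that proposition, we conclude $\rmK_A \cong I$ for some ideal $I$ of $A$, which is exactly the desired conclusion.

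I do not expect any real obstacle here; the only point to be careful about is ensuring $\Ass A = \Min A$ in the reduced case (so that $\Assh A \subseteq \Ass A$ indeed consists of minimal primes, where localization gives a field rather than only the weaker Gorenstein conclusion). Everything else is a direct invocation of the preceding proposition.
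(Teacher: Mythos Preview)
Your proposal is correct and follows essentially the same route as the paper: the paper simply observes that in a reduced ring $A_\fkp$ is a field for every $\fkp \in \Ass A$, and then invokes Proposition~\ref{prop2}. Your version spells out the reasoning (reducedness forces $\Ass A = \Min A$, and a reduced zero-dimensional local ring is a field) in slightly more detail, but the argument is the same.
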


\if0
Since $A$ is reduced, for every minimal prime $\fkp$, $A_{\fkp}$ is a field. Then the assertion follows from (\ref{prop2}).
\fi


We recall that if $A$ is Cohen-Macaulay, the canonical module $\rmK_A$ has rank one if and only if the ring $A$ is {\it generically Gorenstein}, i.e., $A_{\p}$ is a Gorenstein local ring for every $\p \in \Min A$. When one of the equivalent conditions of  \cite[Proposition 3.3.18]{BH} is satisfied, the canonical module can be identified with an ideal of $A$ (see also \cite[Satz 6.21]{HK}). 
The example below shows that
the assumption $\fkp \in \Assh A$ is necessary for Proposition \ref{prop2}.

\begin{Example}\label{ex1}
Let $S=k\lb X,Y,Z \rb$ be the formal power series ring over a field $k$ and set $A= S/[(X) \cap (Y,Z)^2]$. Let  $x, y, z$ denote the images of $X, Y, Z$ in $A$, respectively. Then we have 
\begin{center}
$U=(x)$ and $\rmK_A  \cong A/(x) \cong y^2A$. 
\end{center}
By Proposition \ref{prop2}, $\rmK_{A}$ is torsionless. However, $A$ is 
not generically Gorenstein. In fact, $A_{\fkq}$ is not a Gorenstein ring for $\fkq = (y,z) \in \Min A$.  
\end{Example}



\section{Reflexive canonical modules}\label{RCM}

Let $(A, \fkm)$ be a Noetherian local ring of dimension $d$ admitting the canonical module $\rmK_{A}$. We denote by $U$ the unmixed component of $(0)$ in $A$.
In this section we will show how the reflexivity of the canonical module is related to the Gorensteinness of the ring.  We begin with the following simple but effective lemma.

\begin{Lemma}\label{1.5}
Suppose that $\rmK_A$ is reflexive. Then $\Ass A \cap V(U) = \Assh A$. 
In particular, if $\rmK_{A}$ is reflexive and $\depth A =0$, then $\dim A =0$. 
\end{Lemma}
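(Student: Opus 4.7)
The plan is to establish the set equality $\Ass A \cap V(U) = \Assh A$ and then derive the ``in particular'' clause as an immediate corollary. The inclusion $\Assh A \subseteq \Ass A \cap V(U)$ is essentially tautological: $\Assh A \subseteq \Ass A$ by definition, and since $U = \bigcap_{\fkq \in \Assh A} Q(\fkq)$ we have $U \subseteq \fkp$ for every $\fkp \in \Assh A$. The substantive task is the reverse inclusion, which I would handle by localization. Fix $\fkp \in \Ass A \cap V(U)$ and set $B := A_\fkp$. Then $\depth B = 0$ since $\fkp \in \Ass A$, and by Proposition \ref{P1}-(5) the localization $[\rmK_A]_\fkp = \rmK_B$ is the canonical module of $B$; because reflexivity of finitely generated modules over a Noetherian ring localizes, $\rmK_B$ is reflexive over $B$.

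The problem thus reduces to the following local claim: \emph{if $(B, \fkm_B)$ is Noetherian local with $\depth B = 0$ and $\rmK_B$ reflexive, then $\dim B = 0$.} To prove it I would invoke the standard identity $\Ass_B \Hom_B(M, N) = \Supp_B M \cap \Ass_B N$ for finitely generated $M$. Since $\rmK_B \neq 0$ and $\rmK_B \cong \rmK_B^{**}$, the dual $\rmK_B^{*} = \Hom_B(\rmK_B, B)$ must be nonzero, so locality of $B$ forces $\fkm_B \in \Supp_B \rmK_B^{*}$. Combined with $\fkm_B \in \Ass B$ (from $\depth B = 0$), the Ass--Hom formula gives
\[
\fkm_B \;\in\; \Supp_B \rmK_B^{*} \cap \Ass B \;=\; \Ass_B \Hom_B(\rmK_B^{*}, B) \;=\; \Ass_B \rmK_B^{**} \;=\; \Ass_B \rmK_B \;=\; \Assh B,
\]
the last equality being Proposition \ref{P1}-(1). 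Therefore $\dim B = \dim B/\fkm_B = 0$.

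Returning to $A$, the local claim yields $\h \fkp = \dim A_\fkp = 0$; since $\fkp \in V(U)$, Proposition \ref{P1}-(3) supplies $\dim A/\fkp + \h \fkp = d$, hence $\dim A/\fkp = d$ and $\fkp \in \Assh A$. The ``in particular'' clause is then automatic: if $\depth A = 0$, then $\fkm \in \Ass A \cap V(U) = \Assh A$, forcing $\dim A = \dim A/\fkm = 0$. The only genuine obstacle, in my view, is the observation that reflexivity immediately produces $\rmK_B^{*} \neq 0$; once that point is on the table, the Ass--Hom formula closes the argument essentially mechanically.
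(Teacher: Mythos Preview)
Your proof is correct and rests on the same key computation as the paper's: the Ass--Hom identity $\Ass_A \Hom_A(\rmK_A^{*},A)=\Supp_A \rmK_A^{*}\cap \Ass A$ combined with $\rmK_A^{**}\cong \rmK_A$ and $\Ass_A \rmK_A=\Assh A$. The paper simply runs this chain directly over $A$ in a single line (noting $\Supp_A \rmK_A^{*}=V(U)$ under reflexivity), whereas you detour through localization to a depth-zero local ring; the localization is harmless but unnecessary.
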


\begin{proof} The assertion follows from
\[V(U) \cap \Ass A =  \Supp_A \rmK_{A}^{*} \cap \Ass A = \Ass_A  \Hom_{A}( \rmK_{A}^{*}, A )  = \Ass_A\rmK_{A}^{**}  = \Ass_A\rmK_{A} = \Assh A. \qedhere \]
\end{proof}

The example below shows that the reflexivity of $\rmK_A$ may require a rather strong restriction on $A$.

\begin{Example}\label{ex2}
Let $S=k\lb X,Y \rb$ be the formal power series ring over a field $k$ and set $A= S/[(X) \cap (X^{2}, Y)]$. Let  $x, y$ denote the images of $X, Y$ in $A$, respectively. Let $\fkm=(x, y)$ be the maximal ideal of $A$. Then we have ${\ds \Assh A=\left\{ (x) \right\} }$, ${\ds U=(x) }$, and ${\ds \rmK_{A}=A/U}$.
\begin{enumerate}[(1)]
\item Let $\fkp=(x)$. Since $A_{\fkp}$ is a field, by Proposition~\ref{prop2}, $\rmK_{A}$ is torsionless.

\item Since $\depth A=0$ and $\dim A=1$, by  Lemma~\ref{1.5}, $\rmK_{A}$ is not reflexive.
\end{enumerate}
\end{Example}


\begin{Proposition}\label{1.6}
Suppose $d=1$.
Then $\rmK_{A}$ is a reflexive $A$-module if and only if $A$ is a Gorenstein ring.
\end{Proposition}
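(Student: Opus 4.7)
My plan is as follows. The backward direction is immediate: if $A$ is Gorenstein, then $\rmK_A \cong A$, which is reflexive.

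For the forward direction, assume $\rmK_A$ is reflexive. Lemma~\ref{1.5} combined with $\dim A = 1$ rules out $\depth A = 0$, so $\depth A \geq 1$ and $A$ is Cohen-Macaulay. Since reflexive modules are torsionless, Proposition~\ref{prop2} yields that $A_\fkp$ is Gorenstein for every $\fkp \in \Assh A = \Ass A = \Min A$ (the last two equalities holding since $A$ is Cohen-Macaulay of dimension one). Consequently $A$ is generically Gorenstein, $\rmK_A$ has rank one, and by Proposition~\ref{prop2} one can realize $\rmK_A$ as a regular ideal $I \subseteq A$ with $A/I$ of finite length. It remains to show that $\rmK_A$ is cyclic, equivalently that the type $r(A) = 1$.

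The plan for this last step is to exploit the length identity $\ell_A(\Ext_A^1(M, A)) = r(A) \cdot \ell_A(M)$, which holds for every finite-length $A$-module $M$ in the present one-dimensional Cohen-Macaulay setting. This follows from the minimal injective resolution $0 \to A \to \bigoplus_{\fkp \in \Min A} E(A/\fkp) \to E(A/\fkm)^{r(A)} \to 0$ of $A$ as an $A$-module by applying $\Hom_A(M, -)$ and noting $\Hom_A(M, E(A/\fkp)) = 0$ for $\fkp \in \Min A$ when $M$ has finite length. Dualizing $0 \to I \to A \to A/I \to 0$ identifies $\rmK_A^{*}/A \cong \Ext_A^1(A/I, A)$ (using $\Hom_A(A/I, A) = 0$), and so $\ell(\rmK_A^{*}/A) = r(A) \cdot \ell(A/I)$. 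Dualizing $0 \to A \to \rmK_A^{*} \to \rmK_A^{*}/A \to 0$ and invoking the reflexivity $\rmK_A^{**} \cong I$ produces an exact sequence $0 \to I \to A \to \Ext_A^1(\rmK_A^{*}/A, A) \to \Ext_A^1(\rmK_A^{*}, A) \to 0$, whence $\ell(A/I) + \ell(\Ext_A^1(\rmK_A^{*}, A)) = r(A)^2 \cdot \ell(A/I)$. Combined with a separate vanishing $\Ext_A^1(\rmK_A^{*}, A) = 0$, this forces $r(A) = 1$ or $\ell(A/I) = 0$, hence $A$ is Gorenstein.

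The main obstacle is controlling $\Ext_A^1(\rmK_A^{*}, A)$: since $\rmK_A^{*}$ is a maximal Cohen-Macaulay module rather than one of finite length, the length identity above does not apply to it directly, and local duality with respect to $\rmK_A$ provides only $\Ext_A^1(\rmK_A^{*}, \rmK_A) = 0$, whose transfer to $A$-coefficients would ess
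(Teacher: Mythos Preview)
Your forward-direction setup is fine through the point where $A$ is Cohen--Macaulay and $\rmK_A \cong I$ for an ideal $I$ with $\ell_A(A/I) < \infty$. The problem is exactly where you flag it: the identity
\[
\ell(A/I) + \ell\bigl(\Ext_A^1(\rmK_A^{*}, A)\bigr) = r(A)^2 \cdot \ell(A/I)
\]
is not a constraint but a tautology. It is simply the length count coming out of your own exact sequence $0 \to I \to A \to \Ext_A^1(\rmK_A^{*}/A, A) \to \Ext_A^1(\rmK_A^{*}, A) \to 0$ together with $\ell\bigl(\Ext_A^1(\rmK_A^{*}/A, A)\bigr) = r(A) \cdot \ell(\rmK_A^{*}/A) = r(A)^2 \ell(A/I)$. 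So the entire argument rests on the ``separate vanishing'' $\Ext_A^1(\rmK_A^{*}, A) = 0$, and that vanishing is precisely $3$-torsionfreeness of $\rmK_A$ (recall $\Ext_A^{i+2}(\rmD(M), A) \cong \Ext_A^i(M^{*}, A)$ for $i > 0$). You have only assumed $2$-torsionfreeness, so there is no evident way to extract this; in effect the approach demands a hypothesis strictly stronger than the one given.

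The paper's argument sidesteps this by reducing to dimension zero. Reflexivity gives a $2$-syzygy presentation $0 \to \rmK_A \to F_1 \to F_0$; modding out by an $A$-regular element $a \in \fkm$ yields an embedding $\rmK_A/a\rmK_A \hookrightarrow F_1/aF_1$, so $\rmK_{A/aA} \cong \rmK_A/a\rmK_A$ is torsionless over the Artinian ring $A/aA$, and Proposition~\ref{prop2} then forces $A/aA$, hence $A$, to be Gorenstein. The moral is that the ``kill a regular element to drop one syzygy'' trick is calibrated exactly to the hypothesis, whereas your Ext/length bookkeeping implicitly asks for one more degree of torsionfreeness than is available.
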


\begin{proof}
 Suppose that $\rmK_{A}$ is reflexive. By Lemma \ref{1.5}, $A$ is Cohen-Macaulay.
Since $\rmK_{A}$ is reflexive, there exists an exact sequence 
${\ds 0 \rar \rmK_A \rar F_{1} \rar F_{0} }$, where $F_{0}, F_{1}$ are finite free  $A$-modules \cite[Proposition 2.1]{F}.
Let $a \in \fkm$ be an $A$-regular element. Since $A$ is Cohen-Macaulay,  $\rmK_{A}/a\rmK_{A}$ is the canonical module of $A/aA$. Moreover, the embedding  ${\ds  0 \rar \rmK_A/a\rmK_A \rar F_{1}/aF_{1} }$  proves that $K_{A/aA}$ is torsionless. Therefore, by Proposition \ref{prop2}, $A/aA$ is a Gorenstein ring.  Thus, $A$ is a Gorenstein ring. The converse is clear.
\end{proof}

\begin{Remark}
{\rm There exist non-Cohen-Macaulay local rings with reflexive canonical module. Example \ref{ex3} shows a two-dimensional non-Cohen-Macaulay local ring $A$ with $\rmK_{A}$ reflexive. The example also shows that, even if $\rmK_{A}$ is reflexive, the equality $\Ass A = \Assh A$ does not hold true in general.}
\end{Remark}

Recall that a finitely generated $A$-module $M$ is reflexive, i.e., the canonical map $\varphi : M \to M^{**}$ is an isomorphism, if and only if there is at least one isomorphism $M \cong M^{**}$ of $A$-modules. 

\begin{Lemma}\label{lem3.5}
Suppose that there is an exact sequence
\[ 0 \to \rmK_A \to \rmK_A^{**} \to C \to 0 \]
of $A$-modules. If $C \ne (0)$, then $A_{\p}$ is a Cohen-Macaulay ring with $\dim A_{\p} = 1$ for every $\p \in \Ass_A C$ with $\depth A_{\p} \ge 1$. In particular, $\p \in V(U)$ and $U_{\p} = (0)$.
\end{Lemma}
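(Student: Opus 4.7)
The plan is to localize the given short exact sequence at $\p$ and combine the depth lemma with the fact that a double dual embeds in a free module. Set $B = A_\p$ and $L = [\rmK_A]_\p$. First I would check that $\p \in V(U) = \Supp_A \rmK_A$ (Proposition~\ref{P1}(3)): otherwise $[\rmK_A]_\p = 0$, so $[\rmK_A^*]_\p = \Hom_{A_\p}([\rmK_A]_\p, A_\p) = 0$ and $[\rmK_A^{**}]_\p = 0$, which forces $C_\p = 0$ and contradicts $\p \in \Ass_A C$. Thus $L = \rmK_B$ by Proposition~\ref{P1}(5).

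Localizing the given sequence yields
\[ 0 \to L \to L^{**} \to C_\p \to 0. \]
Now $L^{**} = \Hom_B(L^*, B)$ embeds into a finite free $B$-module, so $\Ass_B L^{**} \subseteq \Ass B$; since $\depth B \ge 1$, the maximal ideal $\p B$ is not associated to $L^{**}$, giving $\depth_B L^{**} \ge 1$. On the other hand $\depth_B C_\p = 0$ because $\p B \in \Ass_B C_\p$, so the depth lemma
\[ 0 = \depth_B C_\p \ge \min\{\depth_B L - 1,\ \depth_B L^{**}\} \]
forces $\depth_B L \le 1$. But $L = \rmK_B$ satisfies $(\rmS_2)$ by Proposition~\ref{P1}(4), so $\depth_B L \ge \min\{2, \dim B\}$; this forces $\dim B \le 1$, and together with $\dim B \ge \depth B \ge 1$ we conclude $\dim B = 1 = \depth B$, i.e., $A_\p$ is Cohen-Macaulay of dimension one.

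For the final claim $U_\p = (0)$: since $B$ is one-dimensional Cohen-Macaulay, every $\fkq \in \Ass B$ is minimal with $\dim B/\fkq = 1 = \dim B$, so $\Ass B = \Assh B$. Proposition~\ref{P1}(7) then makes $\rmK_B$ faithful, and combining this with Proposition~\ref{P1}(1) together with the compatibility of annihilators with localization for finitely generated modules yields
\[ U_\p = (\ann_A \rmK_A)_\p = \ann_{A_\p}([\rmK_A]_\p) = \ann_B \rmK_B = (0). \]
The main obstacle is the interplay in the depth lemma between the lower bound $\depth_B L^{**} \ge 1$ coming from duality and the $(\rmS_2)$-bound on $\depth_B L$ from Proposition~\ref{P1}(4), which together pin down $\dim A_\p = 1$; the remaining conclusions are routine bookkeeping with the canonical-module facts collected in Proposition~\ref{P1}.
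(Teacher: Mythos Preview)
Your proof is correct and follows essentially the same approach as the paper: localize at $\p$, use that $\rmK_A^{**}$ has its associated primes contained in $\Ass A$ to get $\depth_B L^{**}\ge 1$, apply the depth lemma to force $\depth_B \rmK_B\le 1$, and then invoke $(\rmS_2)$ for $\rmK_B$ to conclude $\dim A_\p=1$. Your write-up merely fills in a few details the paper leaves implicit (the reason $\p\in\Supp_A\rmK_A$, the embedding of $L^{**}$ into a free module, and the derivation of $U_\p=(0)$ via Proposition~\ref{P1}(1),(7)).
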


\begin{proof}
Let $\p \in \Ass_A C$ such that $\depth A_{\p} \ge 1$. Then $\p \in \Supp_A\rmK_A = V(U)$. 
Since $\big[\rmK_A\big]_\p \cong \rmK_{A_{\p}}$, by passing to the ring $A_\p$, we may assume $\depth A>0$ and $\depth_A C=0$. Since $\Ass_A\rmK_A^{**} \subseteq \Ass A$, we have
$ \depth_A \rmK_A^{**} \geq 1$. From the exact sequence  ${\ds 0 \to \rmK_A \to \rmK_A^{**} \to C \to 0}$, we obtain 
\[ 0 = \depth_{A} C \geq \min \{ \depth_{A} \rmK_{A} -1, \; \depth_{A} \rmK_{A}^{**} \}. \]
Thus, $\depth_A \rmK_A=1$. Since $\rmK_A$ satisfies $(\rmS_2)$, we have
\[ 1 = \depth_{A} \rmK_{A} \geq \min \{ 2, \; \dim A \}. \]
Therefore, $A$ is a Cohen-Macaulay ring of dimension $1$.  In particular, $U=(0)$. 
\end{proof}



Now we aim to generalize  Proposition \ref{1.6}. 

\begin{Theorem}\label{refcano1}
The following conditions are equivalent{\rm \,:}
\begin{enumerate}[{\rm (1)}]
\item $\rmK_A$ is a reflexive $A$-module{\rm \,;}
\item $\Ass A \cap V(U) = \Assh A$, and $A_\fkp$ is Gorenstein for every $\fkp \in \Supp_A \rmK_A$ with $\h_A \fkp \le 1$. 
\end{enumerate}
\end{Theorem}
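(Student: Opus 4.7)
The plan is to prove the two implications separately, leveraging the results that have already been established in Sections~\ref{TCM} and~\ref{RCM}.

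For the implication (1) $\Rightarrow$ (2), the equality $\Ass A \cap V(U) = \Assh A$ is exactly Lemma \ref{1.5}. For the Gorenstein condition, I take $\fkp \in \Supp_A \rmK_A$ with $\h_A \fkp \le 1$ and localize. Since the property of being reflexive is preserved under localization (for finitely generated modules over a Noetherian ring) and $\big[\rmK_A\big]_\fkp = \rmK_{A_\fkp}$ by Proposition \ref{P1}(5), the $A_\fkp$-module $\rmK_{A_\fkp}$ is reflexive. If $\h_A \fkp = 0$, the ring $A_\fkp$ is Artinian, $\rmK_{A_\fkp}$ is in particular torsionless, and Proposition \ref{prop2} forces $A_\fkp$ to be Gorenstein. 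If $\h_A \fkp = 1$, then $\dim A_\fkp = 1$ and Proposition \ref{1.6} applies directly.

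For the implication (2) $\Rightarrow$ (1), I first argue that $\rmK_A$ is torsionless. By Proposition \ref{P1}(3), every $\fkp \in \Assh A$ lies in $V(U) = \Supp_A \rmK_A$ and has $\h_A \fkp = 0$, so hypothesis (2) guarantees that $A_\fkp$ is Gorenstein for every $\fkp \in \Assh A$; Proposition \ref{prop2} then yields that $\rmK_A$ is torsionless. Hence the canonical map $\varphi \colon \rmK_A \to \rmK_A^{**}$ is injective, and we obtain an exact sequence
\[ 0 \to \rmK_A \to \rmK_A^{**} \to C \to 0 \]
with $C = \coker \varphi$. The remaining, and main, task is to prove $C = (0)$, which will finish the proof since $\varphi$ will then be bijective.

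I would argue by contradiction, assuming $C \ne (0)$ and choosing $\fkp \in \Ass_A C$. Since $\Supp_A C \subseteq \Supp_A \rmK_A^{**} \subseteq \Supp_A \rmK_A = V(U)$, the prime $\fkp$ automatically belongs to $V(U)$. Split into two cases according to $\depth A_\fkp$. If $\depth A_\fkp = 0$, then $\fkp \in \Ass A \cap V(U) = \Assh A$ by hypothesis, so $\h_A \fkp = 0$; then (2) makes $A_\fkp$ Artinian Gorenstein, hence $\rmK_{A_\fkp} \cong A_\fkp$ is reflexive, forcing $C_\fkp = 0$, a contradiction. If $\depth A_\fkp \ge 1$, then Lemma \ref{lem3.5} shows that $A_\fkp$ is Cohen-Macaulay of dimension one, so $\h_A \fkp = 1$; condition (2) again makes $A_\fkp$ Gorenstein, $\rmK_{A_\fkp}$ free of rank one, and again $C_\fkp = 0$, a contradiction. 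Thus $\Ass_A C = \emptyset$ and $C = (0)$. The main delicate point is this last case analysis, where Lemma \ref{lem3.5} does the essential work of converting the assumption on associated primes of the cokernel into the height constraint that makes hypothesis (2) applicable.
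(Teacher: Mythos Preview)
Your proof is correct and follows essentially the same approach as the paper's own proof: both directions invoke Lemma~\ref{1.5}, Proposition~\ref{prop2}, Proposition~\ref{1.6}, and Lemma~\ref{lem3.5} in the same way. The only cosmetic difference is that in $(2)\Rightarrow(1)$ you split the analysis of $\fkp\in\Ass_A C$ according to whether $\depth A_\fkp=0$ or $\depth A_\fkp\ge 1$, whereas the paper splits according to whether $\h_A\fkp\le 1$ or $\h_A\fkp\ge 2$; the two case divisions lead to the same contradictions via the same lemmas.
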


\begin{proof}
$(1) \Rightarrow (2)$ By Lemma \ref{1.5}, we have $\Ass A \cap V(U) = \Assh A$.  Let $\fkp \in \Supp_A \rmK_A$ with $\h_A \fkp \le 1$.  If $\fkp \in \Assh A$, then $A_{\fkp}$ is Gorenstein by Proposition~\ref{prop2}. Otherwise, we have $\dim A_{\fkp} =1$. Since  $\rmK_{A_\fkp}$ is a reflexive $A_\fkp$-module, the ring $A_\fkp$ is Gorenstein by Proposition~\ref{1.6}.

\medskip

\noindent $(2) \Rightarrow (1)$ Since $A_\fkp$ is Gorenstein for every $\fkp \in \Assh A$, by Proposition~\ref{prop2}, $\rmK_A$ is torsionless. Hence we have the exact sequence 
\[0 \rar \rmK_{A} \stackrel{\varphi}{\longrightarrow} \rmK_{A}^{**} \rar C \rar 0 \]
of $A$-modules, where $\varphi$ is the canonical homomorphism. 
Suppose that $C \neq (0)$.  Let $\fkp \in \Ass_A C$. 
Note that $\fkp \in \Supp_A \rmK_{A}$ and $\big[\rmK_{A} \big]_{\fkp} \cong \rmK_{A_{\fkp}}$. 
If $\h_A \fkp \le 1$, then by assumption $A_{\fkp}$ is Gorenstein. Then  $C_{\fkp} =(0)$, which is a contradiction. 
Thus, $\h_A \fkp \ge 2$. 
Since $\Ass A \cap V(U) = \Assh A$, we have $\depth A_\fkp \ge 1$. This shows, by Lemma \ref{lem3.5}, that $A_{\p}$ is a Cohen-Macaulay ring with $\dim A_{\p} = 1$, which is a contradiction. Therefore $C=(0)$ and $\rmK_A$ is a reflexive $A$-module.
\end{proof}


We summarize some consequences of Theorem \ref{refcano1}.
Note that $A$ satisfies $(\rmS_{1})$ if and only if $\Ass A=\Min A$, and the latter condition implies $\Ass A \cap V(U) = \Assh A$. 

\begin{Corollary}\label{refcanom}
If $A$ satisfies  $(\rmS_{1})$ and $(\rmG_{1})$, then $\rmK_{A}$ is a reflexive $A$-module.
\end{Corollary}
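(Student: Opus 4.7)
The plan is to deduce the corollary directly from Theorem \ref{refcano1} by verifying its two conditions under the hypotheses $(\rmS_1)$ and $(\rmG_1)$. Since Theorem \ref{refcano1} already packages the hard implication ``(2) $\Rightarrow$ (1)'' (involving the cokernel analysis via Lemma \ref{lem3.5}), the work here is purely to translate Serre's condition $(\rmS_1)$ and the Gorenstein-in-codimension-one condition into exactly the two hypotheses appearing there. No serious obstacle is expected; the main content is the set-theoretic manipulation for the first condition.

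For the equality $\Ass A \cap V(U) = \Assh A$, I would first recall that $(\rmS_1)$ is equivalent to $\Ass A = \Min A$, as noted in the paragraph preceding the corollary, and that the inclusion $\Assh A \subseteq \Ass A \cap V(U)$ is automatic (since $U = \bigcap_{\fkq \in \Assh A}Q(\fkq) \subseteq \fkq$ for each $\fkq \in \Assh A$, and $\Assh A \subseteq \Ass A$ always holds). For the reverse inclusion, I would take $\fkp \in \Ass A \cap V(U)$; then $\fkp \in \Min A$ by $(\rmS_1)$, and $\fkp \supseteq U$ forces $\fkp \supseteq Q(\fkq)$ for some $\fkq \in \Assh A$, hence $\fkp \supseteq \fkq$. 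Minimality of $\fkp$ then gives $\fkp = \fkq \in \Assh A$.

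For the second condition in Theorem \ref{refcano1}, I would simply observe that $\h_A \fkp \le 1$ means $\dim A_\fkp \le 1$, so $(\rmG_1)$ immediately guarantees that $A_\fkp$ is Gorenstein, whether or not $\fkp$ lies in $\Supp_A \rmK_A$. With both conditions of Theorem \ref{refcano1}(2) verified, the theorem yields that $\rmK_A$ is reflexive, completing the proof.
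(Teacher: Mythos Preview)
Your proposal is correct and follows essentially the same approach as the paper: the corollary is recorded there as an immediate consequence of Theorem~\ref{refcano1}, with the sentence preceding it noting that $(\rmS_1)$ gives $\Ass A = \Min A$, which in turn forces $\Ass A \cap V(U) = \Assh A$. Your argument simply spells out this last implication in detail (the paper could equally have invoked Proposition~\ref{P1}(3) to see that $\fkp \in \Min A \cap V(U)$ has $\h_A\fkp = 0$ and $\dim A/\fkp = d$), and the use of $(\rmG_1)$ for the second hypothesis is identical.
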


Recall that if $\Assh A = \Ass A$, then ${\ds \Spec A = \Supp_{A} \rmK_{A} }$. Thus, we obtain the following as another direct consequence of Theorem \ref{refcano1}.

\begin{Corollary}\label{refcano2}
Suppose that $\Assh A = \Ass A$. 
Then $\rmK_{A}$ is a reflexive $A$-module if and only if $A$ satisfies $(\rmG_{1})$.
\end{Corollary}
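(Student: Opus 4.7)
The plan is to deduce this directly from Theorem \ref{refcano1}; the hypothesis $\Assh A = \Ass A$ essentially trivializes the first clause of condition (2) in that theorem and forces the support of $\rmK_A$ to be all of $\Spec A$, so that the second clause becomes exactly $(\rmG_1)$.

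First I would translate the standing hypothesis into facts about $U$ and $\Supp_A \rmK_A$. By Proposition~\ref{P1}-(7), the equality $\Assh A = \Ass A$ is equivalent to $\rmK_A$ being faithful, i.e. $\ann_A \rmK_A = (0)$; since $\ann_A \rmK_A = U$ by Proposition~\ref{P1}-(1), we get $U = (0)$. Consequently, by Proposition~\ref{P1}-(3),
\[
\Supp_A \rmK_A \;=\; V(U) \;=\; \Spec A.
\]
(Alternatively, one notes $\Assh A \subseteq \Min A \subseteq \Ass A = \Assh A$, so $\Min A = \Assh A$, and then Proposition~\ref{P1}-(6) yields the same conclusion.)

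Next I would apply Theorem~\ref{refcano1}. The first part of condition (2) there, namely $\Ass A \cap V(U) = \Assh A$, reads $\Ass A \cap \Spec A = \Assh A$, which holds by assumption. The second part, that $A_\fkp$ is Gorenstein for every $\fkp \in \Supp_A \rmK_A$ with $\h_A \fkp \le 1$, now reads: $A_\fkp$ is Gorenstein for every $\fkp \in \Spec A$ with $\h_A \fkp \le 1$. This is literally the condition $(\rmG_1)$. Thus Theorem~\ref{refcano1} gives the equivalence of the reflexivity of $\rmK_A$ with $(\rmG_1)$ under the present hypothesis.

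There is no real obstacle here; the corollary is a direct specialization of Theorem~\ref{refcano1}. The only thing to be careful about is justifying $\Supp_A \rmK_A = \Spec A$ cleanly from $\Assh A = \Ass A$, which is handled by Proposition~\ref{P1}. The converse direction of course also goes through Theorem~\ref{refcano1}: if $A$ satisfies $(\rmG_1)$, then in particular $A_\fkp$ is Gorenstein (in fact a field, since $A_\fkp$ is zero-dimensional) for every $\fkp \in \Assh A$, so both clauses of condition (2) of Theorem~\ref{refcano1} are satisfied, and $\rmK_A$ is reflexive.
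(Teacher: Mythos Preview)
Your proof is correct and follows exactly the paper's approach: the paper also observes that $\Assh A = \Ass A$ forces $\Supp_A \rmK_A = \Spec A$ (via Proposition~\ref{P1}) and then reads off the result from Theorem~\ref{refcano1}. One small slip: in your final parenthetical, a zero-dimensional Gorenstein local ring need not be a field (e.g., $k[x]/(x^2)$), but this is irrelevant to the argument, which only uses Gorensteinness.
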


If $A$ is a Cohen-Macaulay ring, the above corollary recovers \cite[Korollar 7.29]{HK}. 
Recall that $A$ is a {\it generalized Cohen-Macaulay ring}, if the $i^{\th}$ local cohomology module $\rmH_\fkm^i(A)$ is a finitely generated $A$-module for every $i \neq d$.

\begin{Corollary}
Suppose that $A$ is a generalized Cohen-Macaulay ring and $d >0$.
Then $\rmK_{A}$ is a reflexive $A$-module if and only if 
$\depth A >0$ and $A$ satisfies $(\rmG_{1})$.  
\end{Corollary}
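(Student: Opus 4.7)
The plan is to reduce the statement to Corollary \ref{refcano2} by exploiting two classical structural properties of generalized Cohen-Macaulay local rings of positive dimension: (i) $A_\fkp$ is Cohen-Macaulay for every non-maximal prime $\fkp$; and (ii) $A$ is equidimensional, that is, $\Min A = \Assh A$. Both are standard consequences of the finite length of $\rmH^i_\fkm(A)$ for $i<d$, with (ii) being equivalent to the dimension formula $\dim A_\fkp+\dim A/\fkp=d$ on the punctured spectrum.

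For the forward direction, I would first argue that $\rmK_A$ reflexive together with $d>0$ forces $\depth A>0$: Lemma \ref{1.5} tells us that $\depth A=0$ would give $\dim A=0$, a contradiction. Hence $\fkm\notin\Ass A$, so any $\fkp\in\Ass A$ is non-maximal, and by (i) the ring $A_\fkp$ is Cohen-Macaulay. Since $\fkp A_\fkp\in\Ass A_\fkp$ and a Cohen-Macaulay local ring has no embedded primes, $\dim A_\fkp=0$, i.e., $\fkp\in\Min A$. Combined with (ii), this yields $\Ass A=\Min A=\Assh A$. Consequently the unmixed component satisfies $U=(0)$ and $\Supp_A\rmK_A=V(U)=\Spec A$. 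Theorem \ref{refcano1} then asserts that $A_\fkp$ is Gorenstein whenever $\h_A\fkp\le 1$, which is precisely $(\rmG_1)$.

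For the converse, assume $\depth A>0$ and $(\rmG_1)$. The very same argument based on (i), (ii), and $\depth A>0$ produces $\Ass A=\Assh A$, after which Corollary \ref{refcano2} immediately gives that $\rmK_A$ is a reflexive $A$-module.

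The main obstacle to watch for is the equidimensionality claim (ii); one might worry whether it genuinely holds in the generalized Cohen-Macaulay setting, but it is well known (see e.g.\ Schenzel, or Stückrad--Vogel's monograph on Buchsbaum rings) that any generalized Cohen-Macaulay local ring of positive dimension is equidimensional, so the reduction goes through cleanly.
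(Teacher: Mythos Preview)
Your proof is correct and follows essentially the same route as the paper: both arguments use the structural properties of generalized Cohen-Macaulay rings to deduce $\Ass A=\Assh A$ once $\depth A>0$ is in hand (via Lemma~\ref{1.5} in the forward direction), and then invoke Corollary~\ref{refcano2}. The only cosmetic difference is that the paper packages your facts (i) and (ii) into the single observation $\Ass A\setminus\{\fkm\}\subseteq\Assh A$ and applies Corollary~\ref{refcano2} in both directions, whereas you unwind this into the Cohen--Macaulayness of proper localizations plus equidimensionality and appeal to Theorem~\ref{refcano1} for the forward implication; the substance is the same.
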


\begin{proof} By assumption, we have ${\ds \Ass A \setminus \{\fkm \} \subseteq \Assh A }$. If $\rmK_A$ is reflexive, then by Lemma \ref{1.5} we see that $\depth A >0$. Without loss of generality, we may assume $\depth A > 0$. Hence ${\ds \Ass A = \Assh A}$. The assertion follows from Corollary \ref{refcano2}.
\end{proof}

It seems natural to ask for the relation between the reflexivity of the $A$-module $\rmK_A$ and that of the $A/U$-module $\rmK_{A/U}$. As for this question, we have
the following.

\begin{Theorem}\label{refcano1s}
The following conditions are equivalent{\rm \,:}
\begin{enumerate}[{\rm (1)}]
\item $\rmK_A$ is a reflexive $A$-module{\rm \,;}
\item $\rmK_{A/U}$ is a reflexive $A/U$-module and $\Ass A \cap V(U) = \Assh A$.
\end{enumerate}
\end{Theorem}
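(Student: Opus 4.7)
The plan is to apply Theorem \ref{refcano1} to both $A$ and $A/U$ and match the resulting Gorenstein criteria at height-$\leq 1$ primes. For $A/U$, the associated primes coincide with the minimal primes (the primes $\p/U$ for $\p \in \Assh A$), so the unmixed component of $(0)$ in $A/U$ is $(0)$; by Proposition \ref{P1}(7), $\rmK_{A/U}$ is thus faithful and $\Supp_{A/U}\rmK_{A/U} = \Spec(A/U)$. Theorem \ref{refcano1} applied to $A/U$ then simplifies to: $\rmK_{A/U}$ is reflexive over $A/U$ if and only if $(A/U)_{\bar\p}$ is Gorenstein for every $\bar\p \in \Spec(A/U)$ with $\h_{A/U}\bar\p \leq 1$. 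Since both (1) and (2) contain the condition $\Ass A \cap V(U) = \Assh A$ (by Lemma \ref{1.5} for (1)), I would assume this throughout.

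The first step is to observe that, under $\Ass A \cap V(U) = \Assh A$, for every $\p \in V(U)$ with $\h_A\p \leq 1$ the ring $A_\p$ is Cohen-Macaulay and $U_\p = (0)$, so $A_\p = (A/U)_{\bar\p}$. Indeed, if $\h_A\p = 0$ then $\p \in \Assh A$ and $A_\p$ is Artinian; if $\h_A\p = 1$ the hypothesis forces $\p \notin \Ass A$, so $\depth A_\p \geq 1$ and $A_\p$ is one-dimensional Cohen-Macaulay. Either way, $\Ass A_\p = \Assh A_\p$, and Proposition \ref{P1}(7) applied to $A_\p$ gives $U_\p = \Ann_{A_\p}\rmK_{A_\p} = (0)$.

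The crux is the converse implication: for $\p \in V(U)$, $\h_{A/U}\bar\p \leq 1$ forces $\h_A\p \leq 1$ (the reverse inequality $\h_{A/U}\bar\p \leq \h_A\p$ being clear by lifting chains of primes). We argue the contrapositive: if $\h_A\p \geq 2$, then since $\rmK_A$ satisfies $(\rmS_2)$ by Proposition \ref{P1}(4), we have $\depth_{A_\p}\rmK_{A_\p} \geq \min\{2,\dim A_\p\} = 2$. On the other hand, $\dim\rmK_{A_\p} = \dim A_\p/\Ann_{A_\p}\rmK_{A_\p} = \dim A_\p/U_\p = \h_{A/U}\bar\p$, so the standard bound $\depth \leq \dim$ forces $\h_{A/U}\bar\p \geq 2$.

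Under $\Ass A \cap V(U) = \Assh A$, these two observations together show that the primes $\p \in V(U)$ with $\h_A\p \leq 1$ correspond bijectively to the primes $\bar\p \in \Spec(A/U)$ with $\h_{A/U}\bar\p \leq 1$, and at each such prime $A_\p = (A/U)_{\bar\p}$. Thus the Gorenstein criterion of Theorem \ref{refcano1} for $A$ is prime-by-prime identical to the simplified criterion for $A/U$, which yields $(1) \Leftrightarrow (2)$. The only non-trivial technical ingredient is the $(\rmS_2)$-depth estimate above.
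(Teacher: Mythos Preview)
Your proof is correct and follows essentially the same strategy as the paper: reduce both sides to the Gorenstein criterion of Theorem \ref{refcano1} (equivalently Corollary \ref{refcano2} for $A/U$), show $U_\fkp=(0)$ and hence $A_\fkp=(A/U)_{\fkp/U}$ at the relevant primes, and match the height-$\leq 1$ loci. The only minor difference is in the height comparison: the paper asserts $\h_A\fkp = \h_{A/U}(\fkp/U)$ directly (a consequence of the dimension formula in Proposition \ref{P1}(3) applied to both $A$ and $A/U$), whereas you deduce the needed equivalence $\h_A\fkp \leq 1 \Leftrightarrow \h_{A/U}(\fkp/U) \leq 1$ from the $(\rmS_2)$-depth bound on $\rmK_A$; both routes are valid.
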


\begin{proof} Let $B=A/U$. Then ${\ds \rmK_{A}=\rmK_{B}}$ (\cite[1.8]{A}). Also note that $\Ass B= \Assh B$.

\medskip

\noindent (1) $\Rightarrow$ (2)
By Lemma \ref{1.5}, we have $\Ass A \cap V(U) = \Assh A$. By Corollary \ref{refcano2},  it suffices to show that $B$ satisfies $(\rmG_{1})$.  Let ${\ds \fkP \in \Spec B}$ be a prime with ${\ds \h_{B} \fkP \leq 1}$. We write  ${\ds \fkP = \fkp/U}$ for some $\fkp \in V(U)$. Then $\h_{A}\fkp = \h_{B} \fkP \leq 1$. Moreover, $\rmK_{A_\fkp}$ is a reflexive $A_\fkp$-module. By Propositions \ref{prop2} and \ref{1.6}, $A_\fkp$ is Gorenstein.
Since ${\ds U_\fkp = (0) :_{A_\fkp} \rmK_{A_\fkp} = (0)}$, we obtain ${\ds B_{\fkP} =A_{\fkp}}$. Thus, ${\ds B_{\fkP}}$ is a Gorenstein ring. 

\medskip

\noindent $(2) \Rightarrow (1)$ Let $\fkp \in \Supp_A \rmK_A$ with $\h_A \fkp \leq 1$. By Theorem \ref{refcano1}, it is enough to show that $A_\fkp$ is Gorenstein. Let $\fkP=\fkp/U$. Then by Corollary \ref{refcano2}, $B_{\fkP}$ is a Gorenstein ring.  Since $\Ass A \cap V(U) = \Assh A$, the ring $A_\p$ is Cohen-Macaulay. In particular, $U_\fkp = (0)$ and ${\ds A_{\fkp} = B_{\fkP}}$.  Therefore $A_\fkp$ is Gorenstein.
\end{proof}

\begin{Corollary}\label{AmodU}
Suppose that $A/U$ is a Gorenstein ring. Then the following assertions hold true. 
\begin{enumerate}[{\rm (1)}]
\item $\rmK_{A}$ is a reflexive $A$-module if and only if ${\ds \Ass A \cap V(U) =\Assh A}$.
\item If $A$ satisfies $(\rmS_1)$, then  $\rmK_{A}$ is reflexive.
\end{enumerate}
\end{Corollary}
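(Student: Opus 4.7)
The plan is to obtain both parts of Corollary \ref{AmodU} as almost immediate consequences of Theorem \ref{refcano1s}, once we exploit that $A/U$ Gorenstein forces its canonical module to be free. Setting $B = A/U$, the Gorenstein hypothesis gives $\rmK_B \cong B$, which is trivially reflexive as a $B$-module; by Proposition \ref{P1}(9) this is also the canonical module $\rmK_A$ viewed as an $A$-module.

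For part (1), the forward direction is just Lemma \ref{1.5}. For the converse, I invoke Theorem \ref{refcano1s}: it characterizes reflexivity of $\rmK_A$ over $A$ by reflexivity of $\rmK_{A/U}$ over $A/U$ together with the condition $\Ass A \cap V(U) = \Assh A$. The first condition is automatic from the Gorenstein assumption on $A/U$, so only the second remains, which is precisely the hypothesis of (1).

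For part (2), the strategy is to reduce to (1) by showing that $(\rmS_1)$ forces $\Ass A \cap V(U) = \Assh A$. Under $(\rmS_1)$ we have $\Ass A = \Min A$, i.e., no embedded primes. The inclusion $\Assh A \subseteq \Ass A \cap V(U)$ is immediate from the primary decomposition defining $U$. For the reverse inclusion, any $\fkp \in \Min A \cap V(U)$ contains $\sqrt{U} = \bigcap_{\fkq \in \Assh A} \fkq$ and hence contains some $\fkq \in \Assh A$; since both $\fkp$ and $\fkq$ lie in $\Min A$ (the latter because $\Assh A \subseteq \Ass A = \Min A$), minimality forces $\fkp = \fkq \in \Assh A$. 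Then part (1) delivers the conclusion.

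The main obstacle is entirely conceptual rather than computational: recognizing that the Gorenstein hypothesis on $A/U$ trivializes the reflexivity condition inside Theorem \ref{refcano1s} so that the statement collapses onto the single condition on associated primes. After that, the remaining manipulations with $\sqrt{U}$ and minimal primes are routine.
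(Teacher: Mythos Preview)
Your proof is correct and follows essentially the same strategy as the paper: both parts are deduced from Theorem \ref{refcano1s} after observing that the Gorenstein hypothesis on $A/U$ makes $\rmK_{A/U}\cong A/U$ trivially reflexive. The only cosmetic difference is in part (2): the paper verifies $\Ass A\cap V(U)\subseteq\Assh A$ via the dimension formula of Proposition \ref{P1}(3) (namely $\h_A\fkp=0$ together with $\dim A=\dim A/\fkp+\h_A\fkp$ forces $\fkp\in\Assh A$), whereas you argue with $\sqrt{U}$ and minimality of primes; both are routine.
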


\begin{proof} Note that (1) follows directly from Theorem \textcolor{purple}{\ref{refcano1s}}. To prove (2), it is enough to show that ${\ds \Ass A \cap V(U) \subseteq \Assh A}$. Let ${\ds \fkp \in  \Ass A \cap V(U) }$. 
Since $A$ satisfies $(\rmS_1)$, we have $\h_A \fkp =0$. Since $\fkp \in V(U)$, we have ${\ds \dim A = \dim A/\fkp + \h_A \fkp = \dim A/\fkp}$. Therefore $\fkp \in \Assh A$.
\end{proof}

Closing this section, we provide the examples of (not necessarily Cohen-Macaulay) local rings admitting reflexive canonical modules.

\begin{Example}
Let $S = k\lb X, Y_1, Y_2, \ldots, Y_n \rb$ ~$(n \geq 2)$ be the formal power series ring over a field $k$ and let $A= S/[(X^m) \cap J]$ where $m \geq 1$ and $J$ is a $(Y_1, Y_2, \ldots, Y_n)$-primary ideal of $S$. 
Let  $x$ denote the image of $X$ in $A$. Then $U=(x^m)$,  $\Assh A = \{(x)\}$, and $A/U$ is a Gorenstein ring. By Corollary \ref{AmodU}, the $A$-module $\rmK_{A}$ is reflexive. 
\end{Example}

\begin{Example} Let $k$ be a field and $R=k[\Delta]$ be the Stanley-Reisner ring of a simplicial complex $\Delta$. Since $R$ is reduced, the graded canonical module $\rmK_R$ (see \cite{GW, St2}) is torsionless. Moreover, if $\#(\Assh R)=1$, the ring $R/U$ is Gorenstein, so that $\rmK_R$ is reflexive as an $R$-module, where $U$ stands for the unmixed component of $(0)$ in $R$ and $\Assh R = \{\fkp \in \Spec R \mid \dim R/\fkp = \dim R \}$.
\end{Example}



\section{Quasi-normal rings}\label{QNR}


Quasi-normal rings were introduced by Vasconcelos (\cite[Definition 1.2]{V1}) and they are exactly $2$-Gorenstein rings.  


\begin{Definition}\label{QN1}{\rm 
A Noetherian ring $R$ is said to be {\em quasi-normal} if $R$ satisfies $(\rmS_{2})$ and $(\rmG_{1})$. 
}\end{Definition}

The following is a direct consequence of \cite[Proposition 2.3]{F} (for a local ring case, see also \cite[Theorem 3.8]{EG85}).  Here we include our alternative proof specifically for quasi-normal rings.

\begin{Proposition}\label{QN2}
Let $R$ be a quasi-normal ring and let $M$ be a finitely generated $R$-module. 
If ${\ds   \depth_{R_{\fkp}} M_{\fkp} \geq \min \{ 2, \; \dim R_{\fkp} \}  }$ for every $\fkp \in \Spec R$, then $M$ is reflexive.
\end{Proposition}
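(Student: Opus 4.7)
The plan is to analyze the canonical map $\varphi\colon M\to M^{**}$ prime by prime and extract reflexivity from Serre's condition on $M$ together with the Gorenstein low-codimension hypothesis on $R$. First, at every $\fkp\in\Spec R$ with $\dim R_{\fkp}\le 1$, the condition $(\rmG_{1})$ makes $R_{\fkp}$ Gorenstein, while the depth hypothesis forces $M_{\fkp}$ to be a maximal Cohen--Macaulay module over $R_{\fkp}$. Such a module is reflexive (trivially by Matlis duality when $\dim R_{\fkp}=0$, and by the standard duality theory of Gorenstein rings when $\dim R_{\fkp}=1$), so $\varphi_{\fkp}$ is bijective there. Consequently both $\ker\varphi$ and $C:=\coker\varphi$ are supported only at primes $\fkp$ with $\dim R_{\fkp}\ge 2$.

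For injectivity, I would pick any $\fkp\in\Ass_R(\ker\varphi)$. Since $\ker\varphi\subseteq M$ we get $\fkp\in\Ass_R M$, so $\depth M_{\fkp}=0$; on the other hand $\fkp\in\Supp_R(\ker\varphi)$ yields $\dim R_{\fkp}\ge 2$, whence the hypothesis on $M$ forces $\depth M_{\fkp}\ge 2$, a contradiction. Thus $\ker\varphi=0$ and $\varphi$ is at least injective.

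For surjectivity, I would suppose $C\ne 0$, choose $\fkp\in\Ass_R C$, and exploit the short exact sequence $0\to M_{\fkp}\to M^{**}_{\fkp}\to C_{\fkp}\to 0$. As above $\dim R_{\fkp}\ge 2$ and $\depth M_{\fkp}\ge 2$; then the depth lemma together with $\depth C_{\fkp}=0$ forces $\depth M^{**}_{\fkp}=0$, i.e.\ $\fkp\in\Ass_R M^{**}$. The decisive input is now the standard identification
\[
\Ass_R M^{**}\;=\;\Ass_R\Hom_R(M^{*},R)\;=\;\Supp_R M^{*}\cap \Ass R\;\subseteq\;\Ass R,
\]
combined with the observation that $(\rmS_{2})$ on $R$ implies $(\rmS_{1})$, so $\Ass R=\Min R$. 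Therefore $\dim R_{\fkp}=0$, contradicting $\dim R_{\fkp}\ge 2$; hence $C=0$ and $M\cong M^{**}$. The main obstacle in this plan is precisely this last step: one must control $\Ass_R M^{**}$, and the cleanest route I see runs through the $\Hom$--$\Ass$ formula together with the chain $(\rmS_{2})\Rightarrow(\rmS_{1})\Rightarrow\Ass R=\Min R$; without $(\rmS_{2})$ on $R$ the bidual may acquire embedded primes and the argument breaks down. An alternative, more machinery-heavy route would be to invoke the Auslander--Bridger characterization and verify that $\Ext^{1}(\rmD M,R)=\Ext^{2}(\rmD M,R)=0$.
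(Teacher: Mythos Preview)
Your proof is correct and follows essentially the same approach as the paper's: establish reflexivity of $M_{\fkp}$ at every $\fkp$ with $\dim R_{\fkp}\le 1$ via $(\rmG_{1})$, then derive a depth-lemma contradiction for any associated prime of $\ker\varphi$ or $\coker\varphi$. The only cosmetic difference is in the surjectivity step: the paper first uses $\depth_{R_{\fkp}} M_{\fkp}^{**}\ge\min\{2,\depth R_{\fkp}\}\ge 1$ (since a bidual is a second syzygy and $(\rmS_{2})$ gives $\depth R_{\fkp}\ge 1$) to force $\depth_{R_{\fkp}} M_{\fkp}\le 1$, hence $\dim R_{\fkp}=1$, whereas you run the depth lemma in the opposite direction to get $\depth_{R_{\fkp}} M_{\fkp}^{**}=0$ and then invoke $\Ass_R M^{**}\subseteq\Ass R=\Min R$ for the contradiction.
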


\begin{proof} Consider the exact sequence of $R$-modules
\[0 \rar X \rar  M \stackrel{\varphi}{\longrightarrow}  M^{**} \rar C \rar 0, \]
where $\varphi$ denotes the canonical homomorphism.
Suppose $X \neq (0)$ and choose $\fkp \in \Ass_R X$.  By assumption, we have $\dim R_{\fkp}=0$. Since $R$ satisfies $(\rmG_{1})$, the local ring ${\ds R_{\fkp}}$ is  Gorenstein, whence $M_{\fkp}$ is reflexive. Hence $X_{\fkp}=(0)$, which is a contradiction. So $X=(0)$, and we have the exact sequence 
\[0 \rar   M \rar  M^{**} \rar C \rar 0. \]
Suppose $C \neq (0)$. Let ${\ds \fkp \in \Ass_R C}$. If $\dim R_{\fkp}=0$, then $M_{\fkp}$ is reflexive, so  $C_{\fkp}=(0)$. This is a contradiction. Thus ${\ds \dim R_{\fkp} \geq 1}$. As $R$ satisfies $(\rmS_{2})$, we have ${\ds \depth R_{\fkp} \geq \min \{ 2, \; \dim R_{\fkp} \} }$. Hence ${\ds \depth R_{\fkp} \geq 1}$. Since 
${\ds \depth_{R_{\fkp}} M_{\fkp}^{**} \geq \min \{2, \; \depth R_{\fkp} \}}$, we then have ${\ds \depth_{R_{\fkp}} M_{\fkp}^{**} \geq 1}$. The exact sequence
\[0 \rar  M_{\fkp} \rar  M_{\fkp}^{**} \rar C_{\fkp} \rar 0 \]
gives that 
\[ 0 = \depth_{R_{\fkp}} C_{\fkp} \geq \min\{ \depth_{R_{\fkp}} M_{\fkp} -1, \;  \depth_{R_{\fkp}} M_{\fkp}^{**} \}. \]
Hence ${\ds \depth_{R_{\fkp}} M_{\fkp}   \leq 1}$. By assumption, we have
\[ 1 \geq  \depth_{R_{\fkp}} M_{\fkp} \geq \min \{ 2, \; \dim R_{\fkp} \}. \]
Therefore ${\ds \dim R_{\fkp} =1}$ and ${\ds \depth_{R_{\fkp}} M_{\fkp}  =1}$. Since $R$ satisfies $(\rmG_{1})$, the ring ${\ds R_{\fkp}}$ is Gorenstein. By \cite[Corollary 2.3]{V1}, we see that $M_{\fkp}$ is reflexive. Hence $C_{\fkp} =(0)$, which is a contradiction.
\end{proof}


 A finitely generated $R$-module $\omega_{R}$ is a {\em canonical module} of $R$, if ${\ds (\omega_{R})_{\fkm}}$ is the canonical module of $R_{\fkm}$ for all maximal ideals $\fkm$ of $R$. In contrast to the local case, the canonical module is in general not unique up to isomorphisms; see e.g., \cite[Remark 3.3.17]{BH}.

\begin{Corollary}\label{QN3}
Let $R$ be a Noetherian ring with $d = \dim R >0$. Suppose that there exists a canonical module $\omega_{R}$ and ${\ds \Ass R_{\fkm}=\Assh R_{\fkm}}$ for every maximal ideal $\fkm$. Then $R$ is quasi-normal if and only if $R$ satisfies $(\rmS_{2})$ and $\omega_{R}$ is reflexive.
\end{Corollary}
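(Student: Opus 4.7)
The plan is to handle the two implications separately, reducing to the local setting and invoking Proposition \ref{QN2} and Corollary \ref{refcano2} respectively.

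For the forward direction, assume $R$ is quasi-normal. Then $R$ satisfies $(\rmS_{2})$ by definition, so it remains to show $\omega_R$ is reflexive. I plan to apply Proposition \ref{QN2} with $M = \omega_R$; this requires verifying that $\depth_{R_\fkp}(\omega_R)_\fkp \geq \min\{2,\dim R_\fkp\}$ for every $\fkp \in \Spec R$. Fix such a $\fkp$, pick a maximal ideal $\fkm \supseteq \fkp$, and use that $(\omega_R)_\fkm = \rmK_{R_\fkm}$. The hypothesis $\Ass R_\fkm = \Assh R_\fkm$ together with Proposition \ref{P1}-(6) gives $\Supp_{R_\fkm}\rmK_{R_\fkm} = \Spec R_\fkm$, so Proposition \ref{P1}-(5) yields $(\omega_R)_\fkp = \rmK_{R_\fkp}$. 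Proposition \ref{P1}-(4) then says $\rmK_{R_\fkp}$ satisfies $(\rmS_{2})$, which in particular gives the required depth bound. Thus Proposition \ref{QN2} applies and $\omega_R$ is reflexive.

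For the converse, assume $R$ satisfies $(\rmS_{2})$ and $\omega_R$ is reflexive. It suffices to show $R$ satisfies $(\rmG_{1})$, and this can be checked after localizing at an arbitrary maximal ideal $\fkm$. Since localization commutes with $\Hom$ for finitely generated modules over a Noetherian ring, the canonical map $\omega_R \to \omega_R^{**}$ localizes to the canonical map $(\omega_R)_\fkm \to ((\omega_R)_\fkm)^{**}$, so $\rmK_{R_\fkm} = (\omega_R)_\fkm$ is a reflexive $R_\fkm$-module. Because $\Ass R_\fkm = \Assh R_\fkm$, Corollary \ref{refcano2} applied to the local ring $R_\fkm$ yields that $R_\fkm$ satisfies $(\rmG_{1})$. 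Running this over all maximal ideals shows $R$ satisfies $(\rmG_{1})$, hence $R$ is quasi-normal.

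The main thing to be careful about is the reduction step in the forward direction: one must use both hypotheses ($(\rmS_2)$, $(\rmG_1)$, and $\Ass R_\fkm = \Assh R_\fkm$) to ensure that $(\omega_R)_\fkp$ really is the canonical module $\rmK_{R_\fkp}$ at every prime, so that Proposition \ref{P1}-(4) supplies the depth inequality needed to feed into Proposition \ref{QN2}. Everything else is a routine localization argument, and the hypothesis $d > 0$ is only there to make the quasi-normality condition substantive.
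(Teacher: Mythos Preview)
Your proof is correct and follows essentially the same approach as the paper: the forward direction applies Proposition~\ref{QN2} to $\omega_R$, and the converse localizes at maximal ideals and invokes Corollary~\ref{refcano2}. Your treatment of the forward direction is in fact more careful than the paper's, which simply asserts that $\omega_R$ satisfies $(\rmS_2)$ without spelling out (as you do) why $(\omega_R)_\fkp \cong \rmK_{R_\fkp}$ for every prime $\fkp$.
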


\begin{proof} Suppose $R$ is quasi-normal. Since $\omega_{R}$ satisfies $(\rmS_{2})$ and ${\ds \dim_{R_{\fkp}} [\omega_{R}]_{\fkp} = \dim R_{\fkp} }$ for every $\fkp \in \Spec R$, by Proposition \ref{QN2}, we conclude that $\omega_{R}$ is reflexive. For the converse, it remains  to show that $R$ satisfies $(\rmG_{1})$. 
Let $A=R_{\fkm}$, where $\fkm$ is a maximal ideal of $R$. Then $\rmK_{A}= (\omega_{R})_{\fkm}$ is reflexive. Therefore we have ${\ds \Ass A =\Assh A}$. By 
Corollary \ref{refcano2}, $A$ satisfies $(\rmG_{1})$. Thus $R$ satisfies $(\rmG_{1})$.
\end{proof}

We summarize some examples. First, we note examples of quasi-normal rings which are not normal. The simplest ones are non-normal Gorenstein rings.

\begin{Example}\label{4.3}
Suppose that $R$ is a Cohen-Macaulay ring with canonical module $\omega_{R}$. We set $T = R \ltimes \omega_{R}$ to be the idealization of $\omega_R$ over $R$. Then $T$ is a Gorenstein ring (\cite{Re}), but not normal because it is never a  reduced ring.
\end{Example}

For a commutative ring $R$, we denote by $\overline{R}$ the integral closure of $R$ in $\rmQ(R)$. We refer to \cite[p. 178]{BH} for background on numerical semigroups.

\begin{Example}
Let $H=\left<a_1, a_2, \ldots, a_\ell \right>$ be a symmetric numerical semigroup. We consider $R = k[s, t^{a_1}, t^{a_2},\ldots, t^{a_\ell}]$, where $s,t$ are indeterminates and $k$ is a field. Then $R$ is a two-dimensional Gorenstein ring with $\overline{R}=k[s,t]$, so that $R$ is not normal if $1 \not\in H$. As a special case, the ring $R = k[s,t^2,t^3]$ is quasi-normal, but not normal. 
\end{Example}


Next, we note examples of quasi-normal but non-normal Cohen-Macaulay rings which are moreover not Gorenstein.

\begin{Example}
{\rm Let $k$ be a field and $k[X,Y]$ the polynomial ring over $k$. Let $H=\left<a_1, a_2, \ldots, a_\ell \right>$ be a symmetric numerical semigroup such that $1 \not\in H$ and let $k[H]=k[t^{a_1}, t^{a_2}, \ldots, t^{a_\ell}]$ denote the semigroup ring of $H$ over $k$, where $t$ is an indeterminate. Let $T=k[X^n, X^{n-1}Y, \ldots, XY^{n-1},Y^n]$, where $n \ge 3$ is an integer. We set $R= T\otimes_k k[H]$. Then $R$ is a quasi-normal Cohen-Macaulay ring with $\dim R=3$, which is neither Gorenstein nor normal.  Indeed, because $\overline{R}=T \otimes_k k[t]$ and $k[H] \ne k[t]$, the ring $R$ is not normal. As $T$ is normal, we see that $R$ is a quasi-normal ring (see Proposition \ref{5.4} (2)). Moreover, $R$ is not a Gorenstein ring because $T$ is not Gorenstein. The simplest example in this class is $R=k[X^3,X^2Y, XY^2, Y^3]\otimes_k k[t^2,t^3]$.}
\end{Example}

\begin{Example}\label{4.5}
Let $T = k[X,Y,Z,V]$ be the polynomial ring over a field $k$. 
We denote by  $\mathbf{I}_2(\mathbb{N})$ the ideal of $T$ generated by all the $2\times 2$ minors of a matrix $\mathbb{N}$. 
Let $I=\mathbf{I}_2(\mathbb{M})$ where $
\mathbb{M}=\left(\begin{smallmatrix}
X^a&Y^b + V&Z^c\\
Y^{b'}&Z^{c'}&X^{a'}\\
\end{smallmatrix}\right)
$ for some integers $a,b,c,a',b',c' \ge 1$. 
We set $R=T/I$. Then $R$ is a Cohen-Macaulay ring of dimension $2$. 
Let $x,y,z,v$ denote the images of $X,Y,Z,V$ in $R$, respectively. We first check the isomorphism $ \omega_{R} \cong (x^a,y^{b'})R$. In fact, by setting
$$
f = Z^{c+c'}-X^{a'}(Y^b+V), \ \ g = X^{a+a'}-Y^{b'}Z^c, \ \ \text{and} \ \ h=-X^aZ^{c'}+Y^{b'}(Y^b+V),
$$ 
we can consider the exact sequence 
$$
\xymatrix{
0 \ar[r] & T^2 \ar[r]^{^t\mathbb{M}} & T^3 \ar[rr]^{\scriptsize \begin{pmatrix}
f & g & h
\end{pmatrix}} && T \ar[r] & R \ar[r] & 0
}
$$
of $T$-modules. By taking the $T$-dual, we get the presentation of $\omega_R$ of the form $T^3 \overset{\mathbb{M}}{\to} T^2 \to \omega_R \to 0$. 
Therefore, the complex of $R$-modules
$$
\xymatrix{
R^3 \ar[r]^{\mathbb{M}} & R^2 \ar[rr]^{\left(\begin{smallmatrix} Y^{b'} & -X^a \end{smallmatrix}\right) \quad \ } & & (x^a,y^{b'})R \ar[r] & 0
}
$$
 induces a natural epimorphism 
$$
\varphi: \omega_{R}\twoheadrightarrow (x^a, y^{b'})R
$$
of $R$-modules. 
Moreover, $\varphi$ is an isomorphism because $\omega_{R}$ is a torsionfree $R$-module of rank one and $x^a$ is a non-zerodivisor on $R$. Hence $\omega_{R}  \cong (x^a, y^{b'})R$, as claimed. 
We similarly have $$\omega_{R}  \cong (y^b+v, z^{c'})R\cong (z^c, x^{a'})R.$$ 
We also note that the isomorphisms can be obtained by using the procedure of \cite[Section 6.1.2]{V2}.
In particular, $R$ is not a Gorenstein ring, since the type of $R$ is two.

Next, we show that $R$ is a quasi-normal ring. 
Let $\p \in \Spec R$ with $\height_R \p \le 1$. 
If $x \not\in \p$, then $[\omega_{R}]_{\fkp}\cong (x^a,y^{b'})R_\p = R_\p$, so that $R_\p$ is a Gorenstein ring. 
Assume that $x\in \p$. Similarly, we may assume that $y,z \in \p$. 
Then, $v \notin \p$, since $\height_R \p \le 1$. 
Therefore, $[\omega_{R}]_{\fkp} \cong (y^b+v,z^{c'})R_\p = R_\p$, so that  $R_\p$ is a Gorenstein ring. Hence $R$ is a quasi-normal ring.

Finally, we prove that $R$ is a normal ring if and only if $a'=b'=c=1$. 
Assume $a'=b'=c=1$ and consider the ideal 
$$
J=\mathbf{I}_2\begin{pmatrix} Z^{c'} & (a+1)X^a & Y^b+V\\ -(b+1)Y^b + V & -Z & bXY^{b-1}\\ c'X^aZ^{c'-1} & -Y & -(c'+1)Z^{c'}\\ -Y & 0 & X \end{pmatrix}.
$$
Then $J+I / I$ is the Jacobian ideal of $R$ over $k$. A direct computation shows that $\sqrt{J+I} =(X,Y,Z,V)$ and hence, by the Jacobian criterion, the local ring $R_\fkp$ is regular for every $\fkp \in \Spec R \setminus\{(x,y,z,v)\}$. Hence $R$ is a normal ring.
Conversely, we assume $a' \ge 2$, or $b' \ge 2$, or $c \ge 2$. By taking
$$P = \begin{cases}
	(X,Y^b+V, Z) & \text{(if $c \ge 2$)}\\
	(X,Y, Z) & \text{(if $c =1$)},
 \end{cases}
$$
we then have $J \subseteq P$. 
We set $\p=PR$. Then $\height_R \p = 1$, but $R_\p$ is not a DVR. 
Indeed, because $\varepsilon = Y$ or $\varepsilon' = Y^b+V$ is invertible in $T_P$, we see that
$$JT_P = 
\begin{cases}
\left(Z^{c+c'} - X^{a'}(Y^b+V), \dfrac{X^{a+a'}}{\varepsilon^{b'}} - Z^c, - \dfrac{X^aZ^{c'}}{\varepsilon} + (Y^b+V) \right) \subseteq (Y^b+V) +(X,Z)^2 & \text{(if $c \ge 2$)}\\
\left(\dfrac{Z^{c+c'}}{\varepsilon'} - X^{a'}, X^{a+a'} - Y^{b'}Z^c, 
-\dfrac{ X^a Z^{c'}}{\varepsilon'} + Y^{b'} \right) \subseteq (Y) +(X,Z)^2 & \text{(if $c =1$)}
\end{cases}
$$ 
in $T_P$. Thus $R_\p= T_P/JT_P$ cannot be a DVR. Hence $R$ is not a normal ring.
As a special case, $R=k[X,Y,Z,V]/\mathbf{I}_2\left(\begin{smallmatrix}
X&Y+V&Z\\
Y&Z&X^2\\
\end{smallmatrix}\right)$ is a
quasi-normal ring, but not normal.
\end{Example}



\section{Reflexive canonical modules in dimension one}\label{one-dimensional}

Let $(A,\fkm)$ be a Cohen-Macaulay local ring with $\dim A = 1$ admitting the canonical module $\rmK_A$. In this section, we explore the question of when $A$ has a reflexive canonical module.  We denote by $\rmQ(A)$ the total ring of fractions of $A$. Throughout this section, we assume that there exists an $A$-submodule $K$ of $\rmQ(A)$ such that $A \subseteq K \subseteq \overline{A}$ and $K \cong \rmK_A$ as an $A$-module, where $\overline{A}$ denotes the integral closure of $A$ in $\rmQ(A)$. Note that the assumption is automatically satisfied if $\rmQ(A)$ is Gorenstein and the residue class field $A/\m$ is infinite; see \cite[Corollaries 2.8, 2.9]{GMT}.
For $A$-submodules $X$ and $Y$ of $\rmQ(A)$, let $X:Y = \{a \in \rmQ(A) \mid aY \subseteq X\}$. If we consider ideals $I, J$ of $A$, we set $I:_AJ =\{a \in A \mid aJ \subseteq I\}$; hence $I:_AJ = (I:J) \cap A$.


\begin{Proposition}\label{prop5.1}
The following conditions are equivalent{\rm \,:}
\begin{enumerate}[$(1)$]
\item $A$ is a Gorenstein ring{\rm \,;}
\item $K^2 : K = K${\rm \,;}
\item $\rmK_A$ is a reflexive $A$-module.
\end{enumerate}
\end{Proposition}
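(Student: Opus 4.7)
The plan is to establish the three equivalences via the following pattern: $(3)\Rightarrow(1)$ is already available as Proposition~\ref{1.6}, while $(1)\Rightarrow(3)$ holds because a Gorenstein ring has canonical module $\rmK_A\cong A$, which is free and hence reflexive. For $(1)\Rightarrow(2)$, when $A$ is Gorenstein the fractional ideal $K$ is principal, say $K=\alpha A$ for some $\alpha\in \rmQ(A)$, and a direct calculation gives $K^2:K=\alpha^2A:\alpha A=\alpha A=K$. (In fact, the constraint $A\subseteq K\subseteq\overline{A}$ forces $\alpha^{-1}\in A$ to be a unit, so $K=A$, but this is not needed for the computation.)

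The substantive direction is $(2)\Rightarrow(3)$. The plan is to derive it from the identity
\[
K^2:K\;=\;K^{**}
\]
of fractional ideals, where $K^{**}=A:(A:K)$. Once this identity is in hand, hypothesis $(2)$ forces $K=K^{**}$, i.e., $K$ is reflexive, so $(3)$ holds and the Gorenstein conclusion $(1)$ then follows from Proposition~\ref{1.6}.

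To prove $K^2:K=K^{**}$, set $I=A:K$. Clearing denominators of a finite generating set of $K$ shows $I\ne 0$, and since $A$ is one-dimensional Cohen--Macaulay the ideal $I$ is $\fkm$-primary (hence maximal Cohen--Macaulay); similarly $K^2$, sandwiched between $K$ and $\overline{A}$, is MCM. A direct check using $K:K=A$ gives $K:K^2=A:K=I$. The crucial ingredient is the Matlis-type self-duality $K:(K:M)=M$ for MCM fractional ideals $M$ (equivalently, $\Hom_A(-,K)$ is an exact duality on the category of MCM modules), which applied to $M=K^2$ yields the key identity
\[
K:I\;=\;K^2.
\]
Granting this, both inclusions of $K^2:K=A:I$ are formal. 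For $y\in K^2:K$, one has $yKI\subseteq K^2I=K(KI)\subseteq K\cdot A=K$, giving $yI\subseteq K:K=A$, so $y\in A:I=K^{**}$. Conversely, for $y\in A:I$, one has $yIK\subseteq AK=K$, so $yK\subseteq K:I=K^2$, whence $y\in K^2:K$. The main obstacle is thus the identity $K:I=K^2$, which rests squarely on the MCM self-duality of the canonical module.
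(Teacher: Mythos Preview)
Your proof is correct and follows essentially the same approach as the paper: both arguments establish the identity $K^2:K = A:(A:K)$ using $K:K=A$ together with the canonical duality $K:(K:K^2)=K^2$, and then conclude via Proposition~\ref{1.6}. The paper packages this as a single chain of colon identities, $A:(A:K)=(K:K):(K:K^2)=[K:(K:K^2)]:K=K^2:K$, while you unwind the same manipulations via two inclusions, but the content is the same.
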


\begin{proof}
(1) $\Leftrightarrow$ (3) See Proposition \ref{1.6}.

(3) $\Leftrightarrow$ (2) Since $A:K=[K:K]:K= K:K^2$ (\cite[Bemerkung 2.5]{HK}), we have $$A:(A:K) = (K:K):(K:K^2) = [K:(K:K^2)]:K=K^2 : K.$$ Therefore, $K^2:K=K$ if and only if $A:(A:K)=K$, that is $\rmK_A$ is a reflexive $A$-module.
\end{proof}

Recall that an ideal $I$ of $A$ is called a {\it canonical ideal} of $A$, if $I \ne A$ and $I \cong \rmK_A$ as an $A$-module. By \cite[Corollary 2.8]{GMT}, there exists a canonical ideal $I$ of $A$. We then have the following.


\begin{Theorem}
Let $I$ be a canonical ideal of $A$. Then the following conditions are equivalent{\rm \,:}
\begin{enumerate}[$(1)$]
\item $A$ is a Gorenstein ring{\rm \,;}
\item $I^2 :_A I = I${\rm \,;}
\item $I/I^2$ is a free $A/I$-module{\rm \,;}
\item $I$ is a reflexive $A$-module.
\end{enumerate}
\end{Theorem}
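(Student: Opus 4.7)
My plan is to route everything through Proposition \ref{prop5.1} by translating between the fractional ideal $K$ of the section's standing hypothesis and the abstract canonical ideal $I$. The equivalence $(1) \Leftrightarrow (4)$ is essentially immediate: since $I \cong \rmK_A$ as an $A$-module, $I$ is reflexive if and only if $\rmK_A$ is, and by Proposition \ref{1.6} the latter is equivalent to $A$ being Gorenstein.

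For $(1) \Leftrightarrow (2)$, the key preliminary is that $I : I = A$. Since $A$ is one-dimensional Cohen-Macaulay and hence unmixed, $\Hom_A(\rmK_A, \rmK_A) = A$; transporting this identity through the isomorphism $I \cong \rmK_A$ and using that $I$ contains a non-zerodivisor identifies $\Hom_A(I,I)$ with $I : I$ as a fractional-ideal operation in $\rmQ(A)$. As a consequence $I^2 : I \subseteq I : I = A$, so $I^2 :_A I = I^2 : I$ and the distinction between the two colons collapses. Next, both $I$ and $K$ are fractional $A$-ideals in $\rmQ(A)$ isomorphic to $\rmK_A$, so any $A$-module isomorphism $K \xrightarrow{\sim} I$ is given by multiplication by a fixed element of $\rmQ(A)^{\times}$; evaluating at $1 \in A \subseteq K$ produces $c \in I \subseteq A$, necessarily a non-zerodivisor, with $I = cK$. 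A direct calculation then yields $I^2 : I = c\,(K^2 : K)$, so $I^2 :_A I = I$ is equivalent to $K^2 : K = K$, which by Proposition \ref{prop5.1} is equivalent to $(1)$.

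Finally, to bring in $(3)$, the direction $(1) \Rightarrow (3)$ is direct: when $A$ is Gorenstein, $\rmK_A \cong A$ forces $I = (a)$ for some non-zerodivisor $a$, and then $I/I^2 = aA/a^2A \cong A/I$ is free of rank one over $A/I$. For $(3) \Rightarrow (2)$, I will use the elementary identity $I^2 :_A I = \ann_A(I/I^2)$. Since $I$ is a canonical ideal we have $I \subseteq \m$ and $I \ne 0$, so Nakayama's lemma rules out $I = I^2$; thus $I/I^2$ is a nonzero free $A/I$-module, and its annihilator in $A$ therefore coincides with $\ann_A(A/I) = I$.

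The most delicate part of the plan is the bookkeeping in the second paragraph—specifically, confirming that $I^2 : I$ already lies inside $A$ before transferring identities from $K$ to $I$—and the endomorphism identity $I : I = A$ is exactly what makes this reduction clean. Once that is in place, the rest is a direct manipulation of fractional ideals on top of Proposition \ref{prop5.1}.
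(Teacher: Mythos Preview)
Your proof is correct and takes a genuinely different (and shorter) route for the one substantive implication $(2)\Rightarrow(1)$. The paper, after noting that Proposition \ref{prop5.1} reduces everything to $(2)\Rightarrow(1)$, proceeds by enlarging the residue field, choosing generators $x_1,\dots,x_n$ of $I$ each of which generates a reduction, passing to $J=bI$ for a suitable non-zerodivisor $b$, establishing $J^2:_AJ=J$ via a socle computation in the Gorenstein artinian ring $A/J$, and finally showing $J^2=y_1J$ so that \cite[Theorem~3.7]{GMT} forces $A$ to be Gorenstein. Your approach instead exploits $I:I=A$ (from the canonical isomorphism $A\to\Hom_A(\rmK_A,\rmK_A)$) to collapse $I^2:_AI$ to the fractional colon $I^2:I$, writes $I=cK$ for a non-zerodivisor $c\in A$, and reads off $I^2:I=c(K^2:K)$, thereby reducing $(2)$ for $I$ to the condition $K^2:K=K$ already handled in Proposition \ref{prop5.1}. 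Your argument is cleaner and avoids both reduction theory and the external reference; the paper's argument, while longer, is more self-contained in that it exhibits directly the principal reduction witnessing Gorensteinness rather than routing through the auxiliary fractional ideal $K$.
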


\begin{proof}
By Proposition \ref{prop5.1}, it suffices to show $(2) \Rightarrow (1)$. Enlarging the residue class field $A/\fkm$ of $A$ if necessary, we may assume that $A/\m$ is infinite. Let $I = (x_1, x_2, \ldots, x_n)$~($n > 0$) so that each $(x_i)$ is a reduction of $I$. We set $K_i = x_i^{-1}I$ and choose a non-zerodivisor $b$ of $A$ so that $bK_i^2 \subseteq A$ for all $1 \le i \le n$. Let $J = bI$ and $y_i = bx_i$ for $1 \le i \le n$. Then $(y_i)$ is a reduction of $J$. Notice that $A/I$ and $A/J$ are both Gorenstein rings, since $I, J \cong \rmK_A$ as $A$-modules.

\begin{claim}\label{claim1}
$J^2 :_AJ= J$.
\end{claim}

\begin{proof}[Proof of Claim 1]
Suppose that $J^2:_A J \supsetneq J$. Then, $J :_A\m \subseteq J^2:_AJ$. Since $A/J$ is a Gorenstein ring, we have $J:_A\m = J+A\varphi$ for some $\varphi \in (J:_A\m) \setminus J$. Hence, $\frac{\varphi}{b} \in \rmQ(A)$ and $\m{\cdot}\frac{\varphi}{b} \subseteq I$, so that $\frac{\varphi}{b} \in I:\m$. Because $I \subsetneq I :_A \m \subseteq I:\m$ and $\ell_A\left((I:\m)/I\right)=1$ (since $A/I$ is a Gorenstein ring), we get $\frac{\varphi}{b} \in I:_A\m$, so that $\frac{\varphi}{b} \in A$. On the other hand, $\frac{\varphi}{b}{\cdot}I \subseteq I^2$, since $\varphi{\cdot}bI =\varphi J \subseteq J^2 = b^2I^2$. Consequently, $\frac{\varphi}{b} \in I^2:_AI = I$, whence $\varphi \in bI = J$, which is impossible. Thus $J^2:_AJ=J$.
\end{proof}

Let $\overline{y_i}$ denote the image of $y_i$ in $J/J^2$. We then have $J/J^2= \sum_{i=1}^n(A/J){\cdot}\overline{y_i}$, and therefore, $(0):_{A/J}\overline{y_i}=(0)$ for some $i$, since $A/J$ is a Gorenstein ring and $(0):_{A/J}J/J^2=(0)$ by Claim \ref{claim1}. Without loss of generality, assume $i=1$. Then, $J^2 :_A y_1 = J$. On the other hand, since $bK_1^2 \subseteq A$ and $K_1=y_1^{-1}J$, we get $b{\cdot}(y_1^{-1}J)^2 \subseteq A$ , whence $bJ^2 \subseteq (bx_1)^2$. Therefore, $J^2 \subseteq (bx_1^2) \subseteq (bx_1)=(y_1)$. Hence, $J^2 = y_1{\cdot}(J^2:_Ay_1)=y_1J$. Thus $A$ is a Gorenstein ring (see \cite[Theorem 3.7]{GMT}).
\end{proof}



\section{$q$-torsionfree canonical modules}\label{qTFCM}

The purpose of this section is to give a generalization of Proposition \ref{prop2} and Theorem \ref{refcano1}, which characterize local rings with $q$-torsionfree canonical modules for $q=1,2$. 

Let $R$ be a Noetherian (not necessarily local) ring and $q$ an integer. 
Let $M$ be a finitely generated $R$-module with a finite projective presentation ${\ds P_{1}  \stackrel{\sigma}{\rar} P_{0} \rar M \rar 0 }$. 
By applying the $R$-dual functor $(-)^{*}=\Hom_R(-, R)$, we obtain the exact sequence
\[ 0 \lrar M^{*} \lrar P_{0}^{*} \stackrel{\sigma^{*}}{\lrar} P_{1}^{*} \lrar \rmD_{\sigma}(M) \lrar 0\]
 of $R$-modules.
We set ${\ds \rmD(M)=\rmD_{\sigma}(M)}$ and call it the {\em Auslander transpose} of $M$.
Note that $\rmD(M)$ is uniquely determined up to projective equivalence. 

\begin{Definition}[{\cite[Definition 2.15]{AB}}]\label{def_q_torsionfree}
A finitely generated $R$-module $M$ is said to be {\em $q$-torsionfree} if $\Ext_{R}^{i}(\rmD(M), R) =0 $ for all $i=1, 2, \ldots, q$.
\end{Definition}

By \cite[Proposition 2.6]{AB}, there exists an exact sequence
\[ 0 \lrar \Ext^{1}_{R}(\rmD(M), R) \lrar M \stackrel{\varphi}{\lrar} M^{**} \lrar \Ext^{2}_{R}(\rmD(M), R) \rar 0\] of $R$-modules, where $\varphi$ is the canonical homomorphism, and furthermore, we have
$$
\Ext_{R}^{i+2}(\rmD(M), R) \cong \Ext_{R}^{i}(M^{*}, R) \ \ \ \text{for all} \ \ i >0.
$$
This shows $M$ is torsionless (resp. reflexive) if and only if $M$ is $1$-torsionfree ($2$-torsionfree). When $q \ge 3$, the $R$-module $M$ is $q$-torsionfree if and only if $M$ is reflexive and $\Ext_{R}^{i}(M^{*}, R)=(0)$ for all $i=1, 2, \ldots, q-2$.

\begin{Example}\label{ex3}
Let $S=k\lb X,Y, Z \rb$ be the formal power series ring over a field $k$ and set $A= S/[(X) \cap (Y, Z)]$. Let  $x, y, z$ denote the images of $X, Y, Z$ in $A$, respectively. Then we have ${\ds \Assh A=\left\{ (x) \right\} }$, ${\ds U=(x) }$, and ${\ds \rmK_{A}=A/U}$, where $U$ denotes the unmixed component of $(0)$ in $A$. 
By dualizing the exact sequence  
\[ A \stackrel{\cdot x}{\longrightarrow} A \rar A/(x)=\rmK_{A} \rar 0,\] we obtain 
\[ 0 \rar \rmK_{A}^{*} \rar A \stackrel{\cdot x}{\longrightarrow} A \rar A/(x) \rar 0.\] Thus, ${\ds \rmD(\rmK_{A}) = \rmK_{A}}$. Consider the free resolution of $\rmD(\rmK_{A})=D$:
\[  A^{5} \stackrel{\tau_{4}}{\longrightarrow} A^{3} \stackrel{\tau_{3}}{\longrightarrow} A^{2} \stackrel{\tau_{2}}{\longrightarrow}  A \stackrel{\cdot x}{\longrightarrow} A   \longrightarrow D   \longrightarrow 0,   \]
where ${\ds \tau_{2} = [y \; z], \; \tau_{3}=\left[ \begin{array}{rrr} x & 0 & z \\ 0 & x & -y  \end{array}  \right],\; \text{and}\; \tau_{4}= \left[ \begin{array}{ccccc} y & z & 0 & 0 & 0 \\ 0 & 0& y & z & 0 \\ 0 & 0 & 0 & 0& x    \end{array}  \right]. }$ 
Dualize this free resolution to obtain
\[ 0 \longrightarrow D^{*} \longrightarrow  A  \stackrel{\cdot x}{\longrightarrow} A \stackrel{\sigma_{2}}{\longrightarrow} A^{2} \stackrel{\sigma_{3}}{\longrightarrow} A^{3} \stackrel{\sigma_{4}}{\longrightarrow} A^{5},   \]
where ${\ds \sigma_{2}, \sigma_{3}, \sigma_{4}}$ are the transposes of ${\ds \tau_{2}, \tau_{3}, \tau_{4} }$, respectively.  Let ${\ds a \in \ker \sigma_{2} }$. Then 
\[  a \in \big[(0):_Ay\big] \cap \big[(0):_Az\big] = (x) \cap (x) =(x) = \Im(\cdot x).\] 
Thus, ${\ds \Ext^{1}_{A}(D, A) =(0)}$. Let 
${\ds 
\left(\begin{smallmatrix}
a_{1}\\
a_{2}\\
\end{smallmatrix}\right)
\in \ker \sigma_{3}}$.  
Then ${\ds a_{1}, a_{2} \in (0):_Ax=(y, z) }$ and ${\ds a_{1}z-a_{2}y =0}$. 
Hence ${\ds
\left(\begin{smallmatrix}
-a_{2}\\
a_{1}\\
\end{smallmatrix}\right)
\in \ker \tau_{2} = \Im(\tau_{3}) }$. 
Let ${\ds -a_{2} = c_{1} x - c_{3}z }$, and ${\ds a_{1}= c_{2}x + c_{3}y  }$ for some ${\ds c_{1}, c_{2}, c_{3} \in A}$. 
Then ${\ds c_{1} x = - a_{2} +c_{3}z \in (y, z) }$. 
Thus, $c_{1}=0$ and $a_{2}=c_{3}z$. Similarly, ${\ds a_{1}= c_{3}y }$. 
We obtain ${\ds 
\left(\begin{smallmatrix}
a_{1}\\
a_{2}\\
\end{smallmatrix}\right)
\in \Im \sigma_{2} }$. 
Then ${\ds \Ext^{2}_{A}(D, A)  =(0)}$. 
Hence $\rmK_{A}$ is $2$-torsionfree. 
Note that ${\ds \ker \sigma_{4}}$ is generated by 
$\left(\begin{smallmatrix}
x\\
0\\
0
\end{smallmatrix}\right)$, $\left(\begin{smallmatrix}
0\\
x\\
0\\
\end{smallmatrix}\right)$, 
$\left(\begin{smallmatrix}
0\\
0\\
y
\end{smallmatrix}\right)$,
$
\left(\begin{smallmatrix}
0\\
0\\
z
\end{smallmatrix}\right)$.
Then ${\ds \Ext^{3}_{A}(D, A)  \neq (0)}$. Thus, $\rmK_{A}$ is not $3$-torsionfree.
\end{Example}

\if0
\begin{Example}
Let $S = k[[X,Y,Z]]$ be the formal power series ring over a field $k$ and let $A = S/[(X) \cap (Y,Z)]$. We denote by $x,y, z$ the images of $X,Y, Z$ in $A$, respectively. We then have $\Assh A = \{(x)\}$, $U = (x) \cong A/(y,z)$, and $\rmK_A= A/U$. We shall check that $\rmK_A=A/U$ is a reflexive $A$-module but not $3$-torsionfree, by showing that $\Ext_A^1(\rmD(A/U),A)=(0)$, $\Ext_A^2(\rmD(A/U), A) = (0)$, but $\Ext_A^1((A/U)^*, A) \ne (0)$ (\cite[(2.15) Definition, (2.17) Theorem]{AB}). 

First of all, take the $A$-dual of the presentation
$ 
A \overset{x}{\rightarrow} A \to A/U \to 0
$ of $A/U$ and get the exact sequence
$$ 
\ \  ({\rm E}) \ \ \ \ \ \ \ 0 \to (A/U)^* \to A \overset{x}{\rightarrow} A \to A/U \to 0,
$$
which shows $\rmD(A/U) = A/U$. Consequently, we have the exact sequence 
$$ 
0 \to \Ext_A^1(A/U,A) \to \rmK_A \overset{\varphi}{\rightarrow} \rmK_A^{**} \to \Ext_A^2(A/U,A) \to 0.
$$
The $A$-module $A/U$ has a minimal free resolution of the form
$$ 
\xymatrix{
\cdots \ar[r] & A^5 \ar[rr]^{\left(\begin{smallmatrix}
y&z&0&0&0\\
0&0&y&z&0\\
0&0&0&0&x
\end{smallmatrix}\right)} && A^{\oplus 3} \ar[rr]^{\left(\begin{smallmatrix}
x&0&z\\
0&x&-y
\end{smallmatrix}\right)} && A^{\oplus 2} \ar[r]^{\left(\begin{smallmatrix}
y&z\\
\end{smallmatrix}\right)} & A \ar[r]^x&  A \ar[r] & A/U \ar[r]& 0.
}
$$  
We take the $A$-dual of the resolution and get the following complex
$$
\xymatrix{
0 \ar[r] & A \ar[r]^x & A \ar[r]^{\left(\begin{smallmatrix}
y\\
z\\
\end{smallmatrix}\right)} & A^{\oplus 2} \ar[rr]^{\left(\begin{smallmatrix}
x&0\\
0&x\\
z&-y\\
\end{smallmatrix}\right)} && A^{\oplus 3} \ar[rr]^{\left(\begin{smallmatrix}
y&0&0\\
z&0&0\\
0&y&0\\
0&z&0\\
0&0&x\\
\end{smallmatrix}\right)
} && A^{\oplus 5} \ar[r] &  \cdots.
}
$$ 
The complex is exact at the part of [$A \overset{x}{\rightarrow} A \overset{\left(\begin{smallmatrix}
y&z\\
\end{smallmatrix}\right)}{\rightarrow} A^{\oplus 2}$], because $(0):_A y =(0):_Az = (x)$. Let $\left(\begin{smallmatrix}
a\\
b\\
\end{smallmatrix}\right) \in \ker [\xymatrix{A^{\oplus 2} \ar[r]^{\left(\begin{smallmatrix}
x&0\\
0&x\\
z&-y\\
\end{smallmatrix}\right)} & A^{\oplus 3}}]$. Then $a,b \in (y,z)$, since $ax=bx=0$, while $\left(\begin{smallmatrix}
-b\\
a\\
\end{smallmatrix}\right) = \left(\begin{smallmatrix}
x&0&z\\
0&x&-y\\
\end{smallmatrix}\right) \left(\begin{smallmatrix}
\varphi\\
\psi\\
\delta\\
\end{smallmatrix}\right)$ for some $\varphi, \psi, \delta \in A$, since $(-b)y + az = 0$. Therefore, $x\varphi, x\psi \in (y,z)$, so that $\varphi, \psi \in (y,z)$. Consequently, $x\varphi=x\psi=0$, and hence, $\left(\begin{smallmatrix}
a\\
b\\
\end{smallmatrix}\right)
= -\delta \left(\begin{smallmatrix}
y\\
z\\
\end{smallmatrix}\right)$.
Thus, $\Ext_A^1(A/U, A) = \Ext_A^2(A/U,A) =(0)$, whence $\rmK_A=A/U$ is a reflexive $A$-module. We similarly have that $\ker [
\xymatrix{A^{\oplus 3} \ar[rr]^{\left(\begin{smallmatrix}
y&0&0\\
z&0&0\\
0&y&0\\
0&z&0\\
0&0&x\\
\end{smallmatrix}\right)} && A^{\oplus 5}}]$ is generated by the vectors $\left(\begin{smallmatrix}
x\\
0\\
0
\end{smallmatrix}\right)$, $\left(\begin{smallmatrix}
0\\
x\\
0\\
\end{smallmatrix}\right)$, 
$\left(\begin{smallmatrix}
0\\
0\\
y
\end{smallmatrix}\right)$,
$
\left(\begin{smallmatrix}
0\\
0\\
z
\end{smallmatrix}\right)
$ so that $\Ext_A^3(A/U,A) \ne (0)$. Hence, the sequence (E) shows $\Ext_A^1((A/U)^*,A) \ne (0)$, and therefore, $\rmK_A$ is not $3$-torsionfree.
\end{Example}
\fi

\begin{Definition}[{\cite[Definition 2.15]{AB}}]
A finitely generated $R$-module $M$ is called {\em $q$-syzygy}, if there exist finite free $R$-modules $F_{1}, F_{2}, \ldots, F_{q}$ and an exact sequence ${\ds 0 \to M \to F_{1} \to F_{2} \to \cdots \to F_{q}}$ of $R$-modules.
\end{Definition}

Note that $(a)$ $M$ is torsionless if and only if $M$ is $1$-syzygy, (b) every $q$-torsionfree $R$-module is $q$-syzygy, and (c) if $M$ is $q$-syzygy and $x$ is an $R$-regular element, then $M/xM$ is $(q-1)$-syzygy as an $R/xR$-module.


Although the following theorem has been proved by Foxby in a more general setting involving Gorenstein modules, we restate it and give its proof in our context for the sake of completeness.
Recall that $R$ is $q$-Gorenstein if $R_{\fkp}$ is Gorenstein for every prime $\fkp$ with ${\ds \depth R_{\fkp} <q}$.

\begin{Theorem}[{\cite[Proposition 3.2]{F1}}]\label{3.1} 
Let $A$ be a Cohen-Macaulay local ring admitting the canonical module $\rmK_{A}$. Then the following conditions are equivalent{\rm \,:}
\begin{enumerate}[{\rm (1)}]
\item $A$ is $q$-Gorenstein{\rm \,;}
\item $\rmK_{A}$ is $q$-torsionfree{\rm \,;}
\item $\rmK_{A}$ is $q$-syzygy.
\end{enumerate}
\end{Theorem}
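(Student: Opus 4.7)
The plan is to prove the chain $(2)\Rightarrow(3)\Rightarrow(1)\Rightarrow(2)$. The first implication $(2)\Rightarrow(3)$ is the standard Auslander--Bridger fact: take a free resolution of $\rmD(\rmK_{A})$, dualize it, and use the hypothesis $\Ext^{i}_{A}(\rmD(\rmK_{A}),A)=0$ for $i\le q$ to extract an exact sequence $0\to \rmK_{A}\to F_{1}\to\cdots\to F_{q}$ with $F_{i}$ free.

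For $(3)\Rightarrow(1)$: given $\fkp\in\Spec A$ with $\depth A_{\fkp}<q$, localize the $q$-syzygy sequence at $\fkp$ and use $[\rmK_{A}]_{\fkp}\cong \rmK_{A_{\fkp}}$ (Proposition~\ref{P1}-(5)) to reduce to the case $A=A_{\fkp}$ with $d:=\dim A<q$; the task becomes showing $A$ is Gorenstein. I would induct on $d$. For $d=0$, the module $\rmK_{A}\cong E_{A}(A/\fkm)$ embeds in a free module; by injectivity of $\rmK_{A}$ this embedding splits, so $\rmK_{A}$ is a summand of a free module, and a length count ($\ell(\rmK_{A})=\ell(A)$) forces $\rmK_{A}\cong A$. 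For $d\ge 1$, choose an $A$-regular $x\in\fkm$, which is $\rmK_{A}$-regular by Proposition~\ref{P1}-(2); since each image $\Im(F_{i}\to F_{i+1})$ sits inside the free module $F_{i+1}$, the element $x$ is regular on every term of the syzygy sequence, so tensoring with $A/xA$ preserves exactness. Combined with $\rmK_{A}/x\rmK_{A}\cong \rmK_{A/xA}$ (Proposition~\ref{P1}-(8)), this proves $\rmK_{A/xA}$ is $q$-syzygy over the Cohen--Macaulay ring $A/xA$ of dimension $d-1<q$, and the inductive hypothesis gives $A/xA$ Gorenstein, hence $A$ Gorenstein.

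For $(1)\Rightarrow(2)$: induct on $q$. The base cases $q=1,2$ follow from Proposition~\ref{prop2} and Corollary~\ref{refcano2}, once one notes that in the Cohen--Macaulay setting $\Ass A=\Assh A=\Min A$ and depth equals dimension at every prime. For the step $q\ge 3$: if $d<q$ then $A$ itself is Gorenstein and $\rmK_{A}\cong A$ is trivially $q$-torsionfree; otherwise $d\ge q$ and I pick an $A$- and $\rmK_{A}$-regular $x\in\fkm$, observe that $A/xA$ is a Cohen--Macaulay $(q-1)$-Gorenstein ring of dimension $d-1$ (any $\bar\fkp=\fkp/xA$ of depth $<q-1$ lifts to $\fkp$ of depth $<q$, forcing $A_{\fkp}$, and hence $(A/xA)_{\bar\fkp}$, Gorenstein), and invoke the inductive hypothesis to obtain that $\rmK_{A}/x\rmK_{A}=\rmK_{A/xA}$ is $(q-1)$-torsionfree over $A/xA$. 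To lift this to $q$-torsionfreeness of $\rmK_{A}$ over $A$ I would use the natural isomorphism $\rmD_{A/xA}(\rmK_{A}/x\rmK_{A})\cong\rmD_{A}(\rmK_{A})/x\rmD_{A}(\rmK_{A})$ (obtained by tensoring a presentation with $A/xA$), the change-of-rings identity $\Ext^{i+1}_{A}(N,A)\cong\Ext^{i}_{A/xA}(N,A/xA)$ valid for $x$-annihilated $N$, and the long exact sequence coming from multiplication by $x$ on $\rmD_{A}(\rmK_{A})$.

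The main obstacle is this final change-of-rings step: the long exact sequence shows that multiplication by $x$ on $\Ext^{q}_{A}(\rmD(\rmK_{A}),A)$ is injective, but Nakayama's lemma needs surjectivity, which reduces to the vanishing of $\Ext^{q+1}_{A}(\rmD(\rmK_{A})/x\rmD(\rmK_{A}),A)$ and thus to a $q$-torsionfreeness statement for $\rmK_{A}/x\rmK_{A}$ that is strictly stronger than the inductive hypothesis provides. A cleaner workaround would be to exploit the Auslander--Bridger formula $\Ext^{i+2}_{A}(\rmD(\rmK_{A}),A)\cong\Ext^{i}_{A}(\rmK_{A}^{*},A)$ noted in the excerpt and to prove directly that $\Ext^{i}_{A}(\rmK_{A}^{*},A)=0$ for $1\le i\le q-2$ by Ischebeck-type depth bounds, using that $\rmK_{A}^{*}$ is locally free on the Gorenstein locus, which under the $q$-Gorenstein hypothesis contains every prime of depth $<q$.
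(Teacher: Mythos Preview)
Your implications $(2)\Rightarrow(3)$ and $(3)\Rightarrow(1)$ are essentially correct and match the paper's approach. One minor slip in $(3)\Rightarrow(1)$: after modding out by a regular element $x$, you only obtain that $\rmK_{A/xA}$ is $(q-1)$-syzygy, not $q$-syzygy. The obstruction is the last step: from $0\to M_{q-1}\to F_q\to C\to 0$ you need $\operatorname{Tor}_1^A(C,A/xA)=0$, i.e.\ $x$ regular on the cokernel $C$, and nothing guarantees this. Fortunately $d-1<q-1$, so the induction still closes. The paper avoids this bookkeeping by reducing modulo a full system of parameters $f_1,\dots,f_n$ ($n=\dim A_\fkp<q$) in one stroke, landing directly at the Artinian case via Proposition~\ref{prop2}.

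The real gap is in $(1)\Rightarrow(2)$. You correctly diagnose that your change-of-rings induction only gives that multiplication by $x$ is injective on $\Ext^q_A(\rmD(\rmK_A),A)$, which is not enough. Your proposed Ischebeck workaround does not close this: knowing that $\rmK_A^*$ is locally free at primes of height $<q$ tells you $\Ext^i_A(\rmK_A^*,A)$ is supported only on primes of height $\ge q$, but support in high codimension does not force vanishing (think of $A/\fkm$), and the Ischebeck inequality $\Ext^i(M,A)=0$ for $i<\depth A-\dim M$ gives nothing here since $\dim\rmK_A^*=d$. What actually works is the local criterion you are circling around: since $A$ is Cohen--Macaulay and $\rmK_A$ is maximal Cohen--Macaulay, $\rmK_A$ satisfies $(\widetilde{\rmS}_q)$; and at every prime $\fkp$ with $\depth A_\fkp<q$ the hypothesis makes $A_\fkp$ Gorenstein, so $(\rmK_A)_\fkp\cong A_\fkp$ is free, in particular $q$-torsionfree. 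These two local conditions imply global $q$-torsionfreeness by the Auslander--Bridger criterion (this is exactly \cite[Proposition~2.3]{F}, and it is what the paper cites in one line; it is also the content of Theorem~\ref{t1.4} later in the paper). Invoke that criterion directly rather than trying to bound individual Ext groups.
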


\begin{proof} Since $A$ is Cohen-Macaulay, we have ${\ds \Spec A=\Supp_A \rmK_{A} }$ and ${\ds [ \rmK_{A} ]_{\fkp} = \rmK_{A_{\fkp}} }$ for every $\fkp \in \Spec A$.  Notice that ${\ds \rmK_{A_{\fkp}} }$ is maximal Cohen-Macaulay as an $A_{\p}$-module.


$(1) \Rightarrow (2)$ Since every $A$-regular sequence is $\rmK_A$-regular,  the $A$-module $\rmK_{A}$ is $q$-torsionfree by \cite[Proposition 2.3]{F}.

$(2) \Rightarrow (3)$ This follows from \cite[Proposition 2.1]{F}.

$(3) \Rightarrow (1)$ Let $\fkp \in \Spec A$ with $\depth A_\fkp < q$.
Set $n=\dim A_\fkp$. When $n=0$, the ring $A_\fkp$ is Gorenstein. Assume $n>0$ and choose a system $f_1, f_2, \ldots, f_n$ of parameters of $A_\fkp$. Then it is an $A_\fkp$-regular sequence, so that $\rmK_{A_\fkp}/(f_1, f_2, \ldots, f_n)\rmK_{A_\fkp}$ is $1$-syzygy because $n<q$. 
Since $\rmK_{A_\fkp}/(f_1, f_2, \ldots, f_n)\rmK_{A_\fkp}\cong\K_{A_\fkp/(f_1, f_2, \ldots, f_n)A_\fkp}$, we conclude that $A_\fkp/(f_1, f_2, \ldots, f_n)A_\fkp$ is Gorenstein by Proposition \ref{prop2}, whence so is the ring $A_\fkp$. This completes the proof.
\end{proof}

As a direct consequence of Theorem \ref{3.1}, we have the following.

\begin{Corollary}\label{3.1.1}
Let $A$ be a Cohen-Macaulay local ring with $d=\dim A$ admitting the canonical module $\rmK_A$. 
Then $A$ is a Gorenstein ring if and only if $\rmK_{A}$ is $(d+1)$-torsionfree.
\end{Corollary}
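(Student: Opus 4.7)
The plan is to deduce the corollary as an immediate consequence of Theorem \ref{3.1}, by showing that for a Cohen-Macaulay local ring $(A,\fkm)$ of dimension $d$, being Gorenstein is equivalent to being $(d+1)$-Gorenstein in the sense used in the theorem. Once this equivalence is established, the corollary follows from the chain: $A$ Gorenstein $\iff$ $A$ is $(d+1)$-Gorenstein $\iff$ $\rmK_A$ is $(d+1)$-torsionfree, where the second equivalence is exactly $(1) \iff (2)$ of Theorem \ref{3.1}.

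For the forward direction of the auxiliary equivalence, I would observe that if $A$ is Gorenstein then every localization $A_\fkp$ is Gorenstein, so in particular $A$ is $q$-Gorenstein for every integer $q$, including $q = d+1$. For the converse, I would use the Cohen-Macaulay hypothesis in the following crucial way: for every $\fkp \in \Spec A$ we have
\[
\depth A_\fkp \;=\; \dim A_\fkp \;\le\; d \;<\; d+1,
\]
so the defining condition of $(d+1)$-Gorenstein (that $A_\fkp$ be Gorenstein whenever $\depth A_\fkp < d+1$) forces $A_\fkp$ to be Gorenstein for \emph{every} prime $\fkp$. In particular, taking $\fkp = \fkm$, the ring $A = A_\fkm$ itself is Gorenstein.

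No real obstacle is present here: the only subtlety is making sure that the Cohen-Macaulay assumption is used to convert the depth condition into a statement about all primes. Note that the inequality $\dim A_\fkp \le d$ requires the Cohen-Macaulay hypothesis (via the equidimensionality that it entails for localizations at primes in $\Supp_A \rmK_A = \Spec A$); without it, the integer $d+1$ would not be sufficient to cover every prime of $A$. The entire argument fits naturally in a few lines and is essentially bookkeeping on top of Theorem \ref{3.1}.
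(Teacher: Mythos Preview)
Your argument is correct and matches the paper's approach, which simply records the corollary as a direct consequence of Theorem \ref{3.1}. One minor inaccuracy: the inequality $\dim A_\fkp \le d$ holds for \emph{any} Noetherian local ring $A$ of dimension $d$ (since $\dim A_\fkp = \h_A\fkp \le \dim A$), so the equivalence ``$A$ Gorenstein $\iff$ $A$ is $(d+1)$-Gorenstein'' does not itself require the Cohen--Macaulay hypothesis---that hypothesis is needed only so that Theorem \ref{3.1} applies.
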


Theorem \ref{3.1} and Corollary \ref{3.1.1} lead us to the question of the structure of a local ring $A$ with $q$-torsionfree canonical module without the assumption that $A$ is Cohen-Macaulay.


\vspace{0.3em}
For a subset $\Phi$ of prime ideals in a Noetherian ring $R$, we say that 
\begin{itemize}
\item $R$ satisfies $(\mathrm{S}_{n})$ on $\Phi$ if ${\ds \depth R_{\fkp} \geq \min \left\{n, \, \dim R_{\fkp} \right\} }$ for every $\fkp\in \Phi$,
\item $R$ satisfies $(\rmG_{n})$ on $\Phi$ if $R_{\fkp}$ is Gorenstein for every $\fkp\in \Phi$  with ${\ds \dim R_{\fkp} \leq n}$.
\end{itemize}

The main result of this section gives an answer to the above question.


\begin{Theorem} \label{p3.2}
Let $A$ be a Noetherian local ring admitting the canonical module $\rmK_A$. Suppose that $\rmK_{A}$ satisfies $(\mathrm{S}_{q})$. Then the following conditions are equivalent{\rm \,:}
\begin{enumerate}[\rm(1)]
\item $A$ satisfies both $(\mathrm{G}_{q-1})$ and $(\mathrm{S}_{q-1})$ on $\Supp_A \rmK_{A}${\rm \,;}
\item $\rmK_{A}$ is $q$-torsionfree{\rm \,;}
\item $\rmK_{A}$ is $q$-syzygy.
\end{enumerate}
\end{Theorem}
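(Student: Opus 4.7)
My plan is to prove the three implications in the order $(2) \Rightarrow (3) \Rightarrow (1) \Rightarrow (2)$, adapting Foxby's proof (Theorem \ref{3.1}) of the Cohen-Macaulay case.

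The implication $(2) \Rightarrow (3)$ is standard. Starting from a finite projective presentation of $\rmD(\rmK_A)$, I extend it to a resolution of length $q$ and dualize, obtaining a complex $0 \to \rmD(\rmK_A)^{*} \to F_0^{*} \to F_1^{*} \to \cdots \to F_{q-1}^{*}$. The vanishings $\Ext^i(\rmD(\rmK_A), A)=0$ for $i=1,\dots, q$ make this complex exact, and the Auslander-Bridger identification $\rmD(\rmK_A)^{*}\cong \rmK_A^{**}\cong \rmK_A$ exhibits $\rmK_A$ as a $q$-syzygy.

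For $(3) \Rightarrow (1)$, I would localize at $\fkp\in \Supp_A \rmK_A$. By Proposition \ref{P1}-(5), $\rmK_{A_\fkp}=[\rmK_A]_\fkp$, and this module inherits both the $q$-syzygy property and the $(\rmS_q)$ condition from $\rmK_A$. When $\dim A_\fkp \le q-1$, $(\rmS_q)$ gives $\depth \rmK_{A_\fkp} \ge \dim A_\fkp$, so $\rmK_{A_\fkp}$ is maximal Cohen-Macaulay. The crux is to conclude $A_\fkp$ is itself Cohen-Macaulay; this does not follow from a general ``MCM canonical module implies CM ring'' principle (which fails in the non-CM setting, as seen in $A=k\lb x,y\rb/(x^2,xy)$ where $\rmK_A\cong k\lb y\rb$ is MCM while $\depth A=0<1=\dim A$), so I would exploit the full $q$-syzygy chain $0\to \rmK_{A_\fkp} \to F_1 \to \cdots \to F_q$ via iterated application of the depth lemma to the short exact sequences $0 \to C_{i-1} \to F_i \to C_i \to 0$ (with $C_0 = \rmK_{A_\fkp}$), combined with Proposition \ref{P1}-(4) for $\Hom_A(\rmK_A, \rmK_A)$, to force $A_\fkp$ to be Cohen-Macaulay. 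Once this is done, Foxby's Theorem \ref{3.1} applied to $A_\fkp$ gives $A_\fkp$ Gorenstein (as $\depth A_\fkp = \dim A_\fkp < q$), yielding $(\rmG_{q-1})$ on $\Supp_A\rmK_A$. For the remaining $(\rmS_{q-1})$ condition, I plan to induct on $q$, with the base cases $q=1,2$ being Proposition \ref{prop2} and Theorem \ref{refcano1}, and the inductive step applying the depth lemma to $0 \to \rmK_A \to F_1 \to M \to 0$ (with $M$ the $(q-1)$-syzygy cokernel) together with the inductive conclusion for $M$.

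For $(1) \Rightarrow (2)$, I would invoke Foxby's \cite[Proposition 2.3]{F}. Passing to $B = A/U$, Proposition \ref{P1}-(9) gives $\rmK_A = \rmK_B$, and for $\fkp \in V(U)$ with $\dim A_\fkp \le q-1$, hypothesis (1) forces $A_\fkp$ Gorenstein, hence $U_\fkp = 0$ and $B_{\fkp/U} = A_\fkp$. Thus the Gorenstein-in-small-codimension condition on $B$ required by Foxby's criterion is met, and the $(\rmS_q)$ hypothesis on $\rmK_A$ transfers to the appropriate $(\widetilde{\rmS}_q)$ condition on $\rmK_B$ over $B$. Foxby's result then yields $q$-torsionfreeness of $\rmK_B$ over $B$, which we transfer back to $A$ by comparing the Auslander transposes $\rmD_A(\rmK_A)$ and $\rmD_B(\rmK_B)$ using that $U$ annihilates $\rmK_A$. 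The main obstacle I anticipate is the Cohen-Macaulayness step in $(3)\Rightarrow(1)$: extracting $A_\fkp$ Cohen-Macaulay from the $q$-syzygy chain on $\rmK_A$, since the naive implication ``MCM canonical $\Rightarrow$ CM ring'' is unavailable and one must carefully exploit the higher syzygy constraints together with the canonical module structure.
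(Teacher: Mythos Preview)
Your outline correctly isolates the central difficulty in $(3)\Rightarrow(1)$: deducing that $A_\fkp$ is Cohen--Macaulay once $\rmK_{A_\fkp}$ is maximal Cohen--Macaulay and $q$-syzygy. However, the plan to extract this from ``iterated depth lemma plus Proposition~\ref{P1}-(4)'' does not work as stated: applying the depth lemma to the successive short exact sequences $0\to C_{i-1}\to F_i\to C_i\to 0$ only bounds $\depth C_i$ from below in terms of $\depth A_\fkp$ and $\depth C_{i-1}$; it yields no upper bound on $\depth\rmK_{A_\fkp}$ in terms of $\depth A_\fkp$, and the $(\rmS_2)$ property of $\Hom_A(\rmK_A,\rmK_A)$ is not visibly linked to the chain. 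The missing ingredient is Lemma~\ref{l1.7}: if a nonzero module $M$ over a local ring is $q$-syzygy with $q\ge\depth A+2$, then $\depth_A M=\depth A$. This rests on the behaviour of syzygy depths for modules of infinite projective dimension and is not a consequence of the depth lemma alone. With it in hand, Theorem~\ref{t1.8} gives the $(\rmS_{q-1})$ condition on $\Supp_A\rmK_A$ directly (no induction is needed); then for $\fkp$ with $\dim A_\fkp\le q-1$ the $(\rmS_{q-1})$ condition already forces $A_\fkp$ Cohen--Macaulay, and Theorem~\ref{3.1} finishes.

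Your route for $(1)\Rightarrow(2)$ through $B=A/U$ also has a gap: $q$-torsionfreeness of $\rmK_B$ over $B$ does not transfer to $q$-torsionfreeness of $\rmK_A$ over $A$ in general. Example~\ref{ex2} already illustrates this for $q=2$: there $B\cong k\lb Y\rb$ is a DVR, so $\rmK_B\cong B$ is reflexive over $B$, yet $\rmK_A$ is not reflexive over $A$. Any valid transfer must therefore use the $(\rmS_{q-1})$ hypothesis in an essential way, and ``comparing $\rmD_A(\rmK_A)$ with $\rmD_B(\rmK_B)$'' does not explain how. The paper avoids the change of rings entirely by proving a general criterion (Theorem~\ref{t1.4} and Corollary~\ref{c1.6}): if $M$ satisfies $(\rmS_q)$ and $R$ satisfies $(\rmG_{q-1})$ and $(\rmS_{q-1})$ on $\Supp_R M$, then $M$ is $q$-torsionfree. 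Applying this with $M=\rmK_A$ gives $(1)\Rightarrow(2)$ directly.
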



\if0
Let $A$ be a Noetherian local ring. For each integer $i \ge 1$, let $\Omega^i M$ denote the $i^{\th}$ syzygy of a finitely generated $A$-module $M$ with respect to a minimal free resolution
$$
\cdots\xrightarrow[]{\ \ \ } F_{i} \xrightarrow[]{\ \partial_{i}\ } F_{i-1}\xrightarrow[]{\partial_{i-1}}  \cdots \xrightarrow[]{\ \partial_{1}\ } F_{0}\xrightarrow[]{\ \partial_{0}\ } M\xrightarrow[]{\ \ \ } 0.
$$
We set $\Omega^0M=M$ for convention.
The $A$-module $\Omega^iM$ depends, up to isomorphisms, only on $M$.

We note the following due to Okiyama. Here $\pd_A M$ denotes the projective dimension of $M$. 

\begin{Theorem}[{\cite{O}, cf. \cite[Proposition 1.2.8]{Av}}]\label{oki}
Let $A$ be a Noetherian local ring and $M$ a finitely generated $A$-module with $\pd_A M=\infty$. Then the following assertions hold true. 
\begin{enumerate}[\rm (1)]
\item $\depth_A \Omega^iM \ge \depth A$ for every $i>\max\{0,\depth A-\depth_A M\}$.
\item If $\depth_A M>\depth A$, then $\depth_A \Omega^iM=\depth A$ for every $i>0$.
\item Let $n>0$ and assume $\depth_A \Omega^nM>\depth A$. Then $n=(\depth A-\depth_A M)+1$ and $\depth_A \Omega^i M=\depth A$ for every $i\ge0$ with $i\ne n$.
\end{enumerate}
\end{Theorem}
\fi

To show this, we need some auxiliaries. 
The following plays an important role in our argument.

\begin{Lemma}[{\cite[Lemma 4.9]{Dey-Takahashi}}]\label{l1.7}
Let $A$ be a Noetherian local ring and $M$ a nonzero finitely generated $A$-module. Assume that $q \ge \depth A+2$ and $M$ is $q$-syzygy. Then $\depth_A M=\depth A$.
\end{Lemma}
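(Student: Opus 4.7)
The plan is to establish the two inequalities $\depth_A M \ge \depth A$ and $\depth_A M \le \depth A$ separately.

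\textbf{Lower bound.} I would first show by induction on $q$ that any nonzero $q$-syzygy $A$-module $N$ satisfies $\depth_A N \ge \min\{q, \depth A\}$. Given $0 \to N \to F_1 \to F_2 \to \cdots \to F_q$ exact, set $N' = F_1/N$; then $N'$ embeds into $F_2$, so $N'$ is a $(q-1)$-syzygy, and by induction $\depth_A N' \ge \min\{q-1, \depth A\}$. The depth lemma applied to $0 \to N \to F_1 \to N' \to 0$ yields
$$
\depth_A N \ge \min\{\depth_A F_1,\; \depth_A N' + 1\} \ge \min\{q, \depth A\}.
$$
The hypothesis $q \ge \depth A + 2$ then forces $\depth_A M \ge \depth A$.

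\textbf{Reduction to depth zero.} For the reverse inequality, set $r = \depth A$ and choose an $A$-regular sequence $\underline{x} = x_1, \ldots, x_r \in \fkm$; let $\bar A = A/(\underline{x})$, so $\depth \bar A = 0$. Each intermediate kernel $L^i := \ker(F_i \to F_{i+1})$ embeds into the free module $F_i$ (with $L^1 = M$), so $\underline{x}$ is $L^i$-regular for all $i$. Consequently $\mathrm{Tor}_1^A(L^{i+1}, \bar A) = 0$, and tensoring each sequence $0 \to L^i \to F_i \to L^{i+1} \to 0$ with $\bar A$ preserves exactness; splicing these exhibits $\bar M := M/\underline{x}M$ as a nonzero $q$-syzygy over $\bar A$. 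By the depth formula $\depth_A M = \depth_{\bar A} \bar M + r$, it is enough to prove $\depth_{\bar A} \bar M = 0$. Since $q \ge r + 2 \ge 2$, we may assume $\depth A = 0$ and $M$ is a nonzero $2$-syzygy.

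\textbf{Depth-zero case.} Suppose $\depth A = 0$ and $0 \to M \to F_1 \xrightarrow{\partial} F_2$ is exact with $M \ne 0$. By iteratively splitting off rank-one summands—whenever some $f \in F_1$ has $\partial(f) \notin \fkm F_2$, both $f$ and $\partial(f)$ extend to bases of $F_1$ and $F_2$ by Nakayama, and a change of basis makes $\partial$ block-diagonal with an identity block, so the non-identity block gives a new presentation $\partial' \colon F_1' \to F_2'$ of strictly smaller rank with $\ker \partial' = \ker \partial$—I reduce to the case $\Im \partial \subseteq \fkm F_2$. Applying $\Hom_A(k, -)$ with $k = A/\fkm$ yields a left-exact sequence
$$
0 \longrightarrow \Hom_A(k, M) \longrightarrow \operatorname{soc}(F_1) \longrightarrow \operatorname{soc}(F_2),
$$
whose right-hand map is the matrix of $\partial$ acting on socles; every entry lies in $\fkm$ and hence annihilates $\operatorname{soc}(A)$, so this map is zero. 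Therefore $\Hom_A(k, M) \cong \operatorname{soc}(F_1) = \operatorname{soc}(A)^{n_1}$, which is nonzero since $\depth A = 0$ forces $\operatorname{soc}(A) \ne 0$ and $M \ne 0$ forces $n_1 \ge 1$. Hence $\depth_A M = 0$, completing the proof.

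\textbf{Main obstacle.} The principal delicate point is the minimization in the depth-zero case: one must verify that iteratively removing split rank-one summands preserves $\ker \partial$ exactly (not merely up to isomorphism up to projective summands), and that the process terminates. This is a careful but routine matrix manipulation; the remaining ingredients are the depth lemma and $\mathrm{Tor}$-vanishing for regular sequences, both of which are standard.
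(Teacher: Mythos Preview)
Your argument is correct. One minor imprecision: in the reduction step, splicing the tensored short exact sequences $0 \to \bar L^i \to \bar F_i \to \bar L^{i+1} \to 0$ for $1 \le i \le q-1$ exhibits $\bar M$ only as a $(q-1)$-syzygy over $\bar A$; injectivity of $\bar L^q \to \bar F_q$ would require $\mathrm{Tor}_1^A(F_q/L^q,\bar A)=0$, which you have not established. This does no damage, since when $r\ge 1$ one has $q-1 \ge r+1 \ge 2$, and your depth-zero step only needs $\bar M$ to be a $2$-syzygy.

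The paper itself does not prove the lemma; it simply cites \cite[Lemma~4.9]{Dey-Takahashi}. The argument there (also visible as a suppressed draft proof in the paper's source) proceeds quite differently: write $M \cong \Omega^q N \oplus F$ with $F$ free, dispose of the case $\pd_A N<\infty$ by Auslander--Buchsbaum, and for $\pd_A N=\infty$ invoke Okiyama's theorem on depths of syzygies, which says that the strict inequality $\depth_A \Omega^i N > \depth A$ can occur for at most the single value $i=\depth A-\depth_A N+1\le \depth A+1<q$. Your route is more elementary and entirely self-contained, trading that structural black box for an explicit regular-sequence reduction and a socle computation; the Okiyama approach in turn gives finer information, pinning down the whole depth profile along a minimal free resolution in one stroke.
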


\if0
\begin{proof}
Since $M$ is $q$-syzygy, we have $M=\Omega^qN\oplus F$ for some finitely generated $A$-module $N$ and finite free $A$-module $F$. 
If $\pd_A N<\infty$, then $\Omega^q N=0$ because $\pd_A N\le \depth A\le q-2$, and hence $\depth_A M=\depth A$.
We may assume $\pd_A N=\infty$. 
Since $q\ge \depth A\ge \max\{0, \depth A-\depth_A N\}$, we have $\depth_A \Omega^qN\ge \depth A$ (Proposition \ref{oki} (1)).
If $\depth_A \Omega^qN>\depth A$, then $q=\depth A-\depth_A N+1$ (Proposition \ref{oki} (3)), so that 
$\depth A+2\le q=\depth A-\depth_A N+1\le \depth A+1$. This is a contradiction.
Hence $\depth_A \Omega^nN=\depth A$, so $\depth_A M=\depth A$.
\end{proof}
\fi

We apply Lemma \ref{l1.7} to get the following.

\begin{Theorem} \label{t1.8}
Let $R$ be a Noetherian ring and $M$ a finitely generated $R$-module.
If $M$ is $(q+1)$-syzygy, then one has 
$$
\depth R_{\fkp} \ge \min\{q, \depth_{R_\fkp} M_\fkp\} \ \ \ \text{for all} \ \ \fkp\in \Supp_R M.
$$
In particular, if $M$ satisfies $(\mathrm{S}_q)$, then $R$ satisfies $(\mathrm{S}_q)$ on $\Supp_R M$.
\end{Theorem}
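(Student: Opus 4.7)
My plan is to reduce to the local setting and invoke Lemma~\ref{l1.7} directly. First I would fix a prime $\fkp \in \Supp_R M$ and note that localizing the length-$(q+1)$ syzygy sequence for $M$ preserves exactness and sends finite free modules to finite free modules, so $M_\fkp$ is a nonzero $(q+1)$-syzygy over $R_\fkp$.

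The main inequality $\depth R_\fkp \ge \min\{q,\ \depth_{R_\fkp} M_\fkp\}$ then splits into two cases. If $\depth R_\fkp \ge q$, then $\min\{q,\ \depth_{R_\fkp} M_\fkp\} \le q \le \depth R_\fkp$, and there is nothing to prove. If instead $\depth R_\fkp < q$, then $q + 1 \ge \depth R_\fkp + 2$, so Lemma~\ref{l1.7} applies to $M_\fkp$ over $R_\fkp$ with parameter $q+1$ and forces $\depth_{R_\fkp} M_\fkp = \depth R_\fkp$; this makes $\min\{q,\ \depth_{R_\fkp} M_\fkp\}$ equal to $\depth R_\fkp$, and the desired inequality holds with equality.

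For the ``in particular'' assertion I would simply chain two estimates: assuming $M$ satisfies $(\mathrm{S}_q)$, for every $\fkp \in \Supp_R M$ one has
$$
\depth R_\fkp \ \ge\ \min\{q,\ \depth_{R_\fkp} M_\fkp\} \ \ge\ \min\{q,\ \dim R_\fkp\},
$$
the first inequality from the main claim and the second from the definition of $(\mathrm{S}_q)$ for $M$. The only real subtlety, and so the ``hard part'' such as it is, is the bookkeeping around the index shift from $(q+1)$-syzygy to $q$: this shift is precisely engineered so that the threshold $q \ge \depth A + 2$ appearing in Lemma~\ref{l1.7} lines up with the strict inequality $\depth R_\fkp < q$ in the nontrivial case, and nothing else in the proof requires any further input.
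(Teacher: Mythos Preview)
Your proof is correct and follows essentially the same approach as the paper: localize at $\fkp$, note that the case $\depth R_\fkp \ge q$ is trivial, and apply Lemma~\ref{l1.7} with parameter $q+1$ when $\depth R_\fkp < q$. The paper's proof is terser and leaves the ``in particular'' clause implicit, but your added detail on the index shift and the chaining $\min\{q,\depth_{R_\fkp} M_\fkp\}\ge \min\{q,\dim R_\fkp\}$ is correct and helpful.
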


\begin{proof} 
By localizing at $\fkp \in \Supp_R M$, it suffices to show $\depth R \geq \min\{q, \depth_R M \}$. If $ \depth R\ge q$, the assertion is obvious. Otherwise, if $\depth R< q$, the assertion follows from Lemma \ref{l1.7}.
\end{proof}

As consequences of Theorems \ref{3.1}, \ref{t1.8}, we get the following.

\begin{Corollary} \label{c3.41}
Let $A$ be a Noetherian local ring with $d=\dim A$ admitting the canonical module $\rmK_A$. Then the following conditions are equivalent{\rm \,:}
\begin{enumerate}[\rm(1)]
\item $A$ is Gorenstein{\rm \,;}
\item $\rmK_{A}$ is a $(d+1)$-torsionfree maximal Cohen-Macaulay $A$-module{\rm \,;}
\item $\rmK_{A}$ is a $(d+1)$-syzygy maximal Cohen-Macaulay $A$-module.
\end{enumerate}
\end{Corollary}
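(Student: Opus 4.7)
The plan is to prove the corollary by leveraging the two preceding theorems (\ref{3.1} and \ref{t1.8}), treating the cycle $(1)\Rightarrow(2)\Rightarrow(3)\Rightarrow(1)$ in turn. The first two implications are essentially formal, and the hard work is packed into $(3)\Rightarrow(1)$, which itself decomposes cleanly into two applications.

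For $(1)\Rightarrow(2)$, if $A$ is Gorenstein, then $\rmK_A\cong A$, so $\rmK_A$ is a free $A$-module and in particular maximal Cohen-Macaulay; its Auslander transpose is the zero module up to projective equivalence, hence $\Ext_A^i(\rmD(\rmK_A),A)=0$ for all $i$, so $\rmK_A$ is $q$-torsionfree for every $q$, in particular for $q=d+1$. The implication $(2)\Rightarrow(3)$ is precisely the observation, already recorded in the excerpt after Definition \ref{def_q_torsionfree}, that every $q$-torsionfree module is $q$-syzygy.

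The substantive direction is $(3)\Rightarrow(1)$, and here is the outline. First I will show that $A$ is itself Cohen-Macaulay. Apply Theorem \ref{t1.8} to the $(d+1)$-syzygy module $M=\rmK_A$ (with parameter $q=d$ in that statement): for every $\fkp\in\Supp_A\rmK_A$ one has $\depth A_\fkp\ge \min\{d,\depth_{A_\fkp}[\rmK_A]_\fkp\}$. Specializing to $\fkp=\fkm$, which lies in $\Supp_A\rmK_A$ because $\rmK_A\ne 0$, and using that $\rmK_A$ is maximal Cohen-Macaulay, i.e.\ $\depth_A\rmK_A=d$, we obtain
\[
\depth A \;\ge\; \min\{d,\, \depth_A\rmK_A\} \;=\; d,
\]
so $\depth A=d$ and $A$ is Cohen-Macaulay.

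With $A$ now Cohen-Macaulay, Theorem \ref{3.1} applies: since $\rmK_A$ is $(d+1)$-syzygy, the ring $A$ is $(d+1)$-Gorenstein, meaning $A_\fkp$ is Gorenstein for every prime $\fkp$ with $\depth A_\fkp<d+1$. But every $\fkp\in\Spec A$ satisfies $\depth A_\fkp\le \dim A_\fkp\le d<d+1$, so this condition holds vacuously for all primes, and $A$ is Gorenstein. I do not anticipate any technical obstacle beyond correctly invoking the two theorems; the only thing to be careful about is the indexing shift between $(q+1)$-syzygy in Theorem \ref{t1.8} and the $(d+1)$-syzygy hypothesis here, and the fact that the maximal Cohen-Macaulay hypothesis on $\rmK_A$ is exactly what feeds Theorem \ref{t1.8} at $\fkp=\fkm$ to force $A$ itself to be Cohen-Macaulay.
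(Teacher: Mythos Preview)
Your proof is correct and follows essentially the same approach as the paper: the paper's proof of $(3)\Rightarrow(1)$ simply invokes Theorem~\ref{t1.8} to get $\depth A=d$ and then appeals to Theorem~\ref{3.1}, exactly as you do. Your careful unpacking of the indexing ($q=d$ in Theorem~\ref{t1.8}) and the role of the maximal Cohen--Macaulay hypothesis at $\fkp=\fkm$ is precisely the content the paper leaves implicit.
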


\begin{proof} We only need to show (3) $\Rar$ (1). By Theorem \ref{t1.8} we have that $\depth A=d$, so that $A$ is Cohen-Macaulay. Hence the assertion follows from Theorem \ref{3.1}.
\end{proof}

\begin{Corollary}\label{c3.4}
Let $(A, \fkm)$ be a Noetherian local ring with $d = \dim A$ admitting the canonical module $\rmK_A$. 
Furthermore, we assume one of the following conditions {\rm (i)} and {\rm (ii)}.
\begin{enumerate}[{\rm (i)}]
	\item $\rmH^i_\fkm(A)=(0)$ for every integer $i\not=0,1,d$.
	\item $d \le 2$.
\end{enumerate}
Then the following conditions are equivalent{\rm \,:}
\begin{enumerate}[\rm(1)]
\item $A$ is Gorenstein{\rm \,;}
\item $\rmK_{A}$ is $(d+1)$-torsionfree{\rm \,;}
\item $\rmK_{A}$ is $(d+1)$-syzygy.
\end{enumerate}
\end{Corollary}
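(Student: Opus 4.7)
The plan is to derive the corollary from the already-proved Corollary \ref{c3.41} together with the depth lower bound packaged in Theorem \ref{t1.8}. The implications (1)$\Rightarrow$(2)$\Rightarrow$(3) are essentially immediate from Corollary \ref{c3.41}: a Gorenstein ring is Cohen-Macaulay, whence $\rmK_A$ is maximal Cohen-Macaulay and Theorem \ref{3.1} makes it $(d+1)$-torsionfree, which a fortiori implies the $(d+1)$-syzygy condition. The whole content therefore lies in (3)$\Rightarrow$(1), and by Corollary \ref{c3.41} it suffices to upgrade ``$\rmK_A$ is $(d+1)$-syzygy'' to ``$A$ is Cohen-Macaulay'' under either of the hypotheses (i), (ii).

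First I would apply Theorem \ref{t1.8} with $q=d$ to the $(d+1)$-syzygy module $\rmK_A$ at the maximal ideal. This gives
\[
\depth A \;\ge\; \min\{d,\ \depth_A \rmK_A\}.
\]
Next I would feed in Proposition \ref{P1}\,(4), which asserts $\rmK_A$ satisfies $(\rmS_2)$, to conclude $\depth_A \rmK_A \ge \min\{2,d\}$. Combining these gives the unconditional bound
\[
\depth A \;\ge\; \min\{2,d\}.
\]
Under hypothesis (ii) this already reads $\depth A \ge d$, so $A$ is Cohen-Macaulay and Corollary \ref{c3.41} finishes the argument.

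Under hypothesis (i), cases $d\le 2$ are already covered by (ii), so I may assume $d\ge 3$. Then the vanishing $\rmH^i_\fkm(A)=(0)$ for $i \notin \{0,1,d\}$, together with the standard facts $\depth A = \inf\{\,i : \rmH^i_\fkm(A)\ne 0\,\}$ and $\rmH^d_\fkm(A)\ne 0$, forces $\depth A \in \{0,1,d\}$. Combined with the previous bound $\depth A\ge 2$ this leaves only $\depth A = d$, so $A$ is again Cohen-Macaulay and Corollary \ref{c3.41} concludes.

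The only substantive ingredient is the bridge provided by Theorem \ref{t1.8} between the $(d+1)$-syzygy condition on a single module and a genuine depth bound on the ring; I do not expect a real obstacle beyond recognizing that hypothesis (i) is calibrated precisely to exclude $\depth A$ from the dangerous range $\{2,3,\dots,d-1\}$, after which a short inequality chase completes the reduction to Corollary \ref{c3.41}.
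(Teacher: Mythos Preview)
Your proof is correct, but for hypothesis (i) it takes a genuinely different route than the paper. The paper passes to the $\fkm$-adic completion and invokes Schenzel \cite[(2.3) Satz]{Sch} to conclude directly that $\rmK_A$ is maximal Cohen--Macaulay under the local-cohomology vanishing, then appeals to Corollary \ref{c3.41}. You instead stay internal to the paper: Theorem \ref{t1.8} plus $(\rmS_2)$ for $\rmK_A$ yields $\depth A\ge\min\{2,d\}$, and the cohomological characterization $\depth A=\inf\{i:\rmH^i_\fkm(A)\ne 0\}$ together with Grothendieck non-vanishing forces $\depth A\in\{0,1,d\}$, hence $\depth A=d$. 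For hypothesis (ii) the two arguments are essentially the same---the paper observes that $(\rmS_2)$ and $d\le 2$ already make $\rmK_A$ maximal Cohen--Macaulay, while you first extract $\depth A=d$ via Theorem \ref{t1.8}, which is precisely what the proof of Corollary \ref{c3.41} does internally. Your approach buys independence from the external Schenzel reference; the paper's buys the stronger intermediate conclusion that $\rmK_A$ is maximal Cohen--Macaulay under (i) without any syzygy hypothesis. One small point worth making explicit: once you have that $A$ is Cohen--Macaulay, $\rmK_A$ is automatically maximal Cohen--Macaulay, which is what licenses the final appeal to Corollary \ref{c3.41}.
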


\begin{proof}
(i) By passing to the $\fkm$-adic completion, we may assume $A$ is $\fkm$-adically complete. 
In view of \cite[(2.3) Satz]{Sch}, it follows that $\rmK_{A}$ is maximal Cohen-Macaulay. Therefore the assertion follows from Corollary \ref{c3.41}.

(ii) Since $\rmK_{A}$ satisfies $(\mathrm{S}_2)$, the assertion follows from Corollary \ref{c3.41}.
\end{proof}


\begin{Remark}
{\rm Let $A$ be a Noetherian local ring admitting the canonical module $\rmK_A$. We say that $A$ is {\it quasi-Gorenstein} if $\rmK_A \cong A$ as an $A$-module. When $d\ge 3$, there exist non-Gorenstein quasi-Gorenstein local rings of dimension $d$ (see e.g., \cite[Theorem 2.11]{A}). Notice that, in such a ring $A$, $\rmK_A$ is $q$-torsionfree for all $q \ge 1$. So, Corollary \ref{c3.4} fails without the condition {\rm (i)} or {\rm (ii)}.}
\end{Remark}

Based on the above observation, it is natural to raise the following question.

\begin{Question}\label{Q1}
Let $A$ be a Noetherian local ring with $d=\dim A\ge 3$ admitting the canonical module $\rmK_A$. When are the following conditions equivalent?
\begin{enumerate}[(i)]
\item $A$ is a quasi-Gorenstein ring, i.e., $\rmK_A \cong A$.
\item $\rmK_A$ is a $(d+1)$-torsionfree $A$-module.
\end{enumerate}
\end{Question}

In what follows, let $R$ be a Noetherian ring and $M$ a finitely generated $R$-module.
The equivalence of $(1)$ and $(2)$ in the next theorem was essentially proved by Auslander and Bridger \cite{AB}. Notice that this is a $q^{\th}$ version of \cite[Proposition 1.4.1]{BH}.

\begin{Theorem} \label{t1.4} 
The following conditions are equivalent{\rm \,:}
\begin{enumerate}[\rm(1)]
\item $M$ is $q$-torsionfree{\rm \,;}
\item $M$ satisfies the conditions below{\rm \,:}
\begin{enumerate}[\rm(i)]
\item $M_\fkp$ is $q$-torsionfree for every $\fkp\in \Supp_R M$ with $\depth R_\fkp < q${\rm \,;}
\item $M$ satisfies $(\widetilde{\rmS}_{q})${\rm \,;}
\end{enumerate}
\item $M$ satisfies the conditions below{\rm \,:}
\begin{enumerate}[\rm(i)]
\item $M_\fkp$ is $q$-torsionfree for every $\fkp\in \Supp_R M$ with $\depth_{R_\fkp} M_\fkp < q${\rm \,;}
\item $\depth_{R_\fkp} M_\fkp=\depth R_\fkp$ for every $\fkp\in \Supp_R M$ with $\depth R_\fkp < q-1$.
\end{enumerate}
\end{enumerate}
\end{Theorem}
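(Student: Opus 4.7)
The plan is to exploit that $q$-torsionfreeness is a local condition: since $\Ext_R^i(\rmD(M),R)$ is finitely generated, $M$ is $q$-torsionfree iff $M_\fkp$ is $q$-torsionfree over $R_\fkp$ for every $\fkp\in\Supp_RM$. Conditions $(2)$ and $(3)$ are two ways of packaging this local requirement by partitioning $\Supp_RM$ according to a depth threshold, on $R_\fkp$ for $(2)$ and on $M_\fkp$ for $(3)$, so both equivalences reduce to comparing the two partitions with the help of the depth bounds in Lemma~\ref{l1.7} and Theorem~\ref{t1.8}.

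For $(1)\Leftrightarrow(2)$: The implication $(1)\Rightarrow(2)(\mathrm{i})$ is immediate from localization. For $(1)\Rightarrow(2)(\mathrm{ii})$, the plan is to use that a $q$-torsionfree module is a $q$-syzygy, so an iterated depth-lemma argument along a $q$-fold free resolution of $M_\fkp$ yields the Auslander-Bridger bound $\depth_{R_\fkp}M_\fkp\ge\min\{q,\depth R_\fkp\}$, which is $(\widetilde{\rmS}_q)$. The converse $(2)\Rightarrow(1)$ is checked prime by prime: at $\fkp$ with $\depth R_\fkp<q$ apply $(2)(\mathrm{i})$ directly, while at $\fkp$ with $\depth R_\fkp\ge q$ one must promote $\depth M_\fkp\ge q$ to $q$-torsionfreeness of $M_\fkp$. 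This is the Auslander-Bridger induction on $q$: the isomorphism $\Ext^{i+2}_{R_\fkp}(\rmD(M_\fkp),R_\fkp)\cong\Ext^i_{R_\fkp}(M^\ast_\fkp,R_\fkp)$ reduces Ext-vanishing for $\rmD(M)$ in high degrees to Ext-vanishing for $M^\ast$, which is controlled by depth estimates on $M^\ast$ and Ischebeck-type bounds using $\depth R_\fkp\ge q$; the base cases $q=1,2$ are Propositions~\ref{prop2} and~\ref{1.6} applied locally.

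For $(1)\Leftrightarrow(3)$: Given the first equivalence, under either of $(2)$ or $(3)$ the module $M$ is $q$-torsionfree and hence $q$-syzygy, so Lemma~\ref{l1.7} gives $\depth M_\fkp=\depth R_\fkp$ at every $\fkp$ with $\depth R_\fkp\le q-2$, which is $(3)(\mathrm{ii})$; and $(3)(\mathrm{i})$ follows because $\depth M_\fkp<q$ combined with $(\widetilde{\rmS}_q)$ forces $\depth R_\fkp\le\depth M_\fkp<q$, reducing to $(2)(\mathrm{i})$. For $(3)\Rightarrow(1)$ I would induct on $q$: since $(3)_q$ trivially implies $(3)_{q-1}$, the induction yields that $M$ is $(q-1)$-torsionfree, and then $q$-torsionfreeness is verified locally by splitting into $\depth M_\fkp<q$ (where $(3)(\mathrm{i})$ applies) and $\depth M_\fkp\ge q$ (where one chases $\Ext^q_{R_\fkp}(\rmD(M_\fkp),R_\fkp)\cong\Ext^{q-2}_{R_\fkp}(M^\ast_\fkp,R_\fkp)$, using depth bounds on $M^\ast$ together with Theorem~\ref{t1.8}). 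The main obstacle throughout is the Auslander-Bridger induction in $(2)\Rightarrow(1)$ at primes with $\depth R_\fkp\ge q$, which requires propagating depth information into Ext-vanishing via the transpose isomorphisms; a secondary subtlety is the threshold case $\depth R_\fkp=q-1$, which sits just outside the reach of Lemma~\ref{l1.7} and requires its own depth-lemma argument.
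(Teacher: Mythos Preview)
Your attack on $(2)\Rightarrow(1)$ and $(3)\Rightarrow(1)$ has a genuine gap. You propose to check $q$-torsionfreeness of $M_\fkp$ at every prime, and at primes with $\depth R_\fkp\ge q$ you claim one can ``promote $\depth M_\fkp\ge q$ to $q$-torsionfreeness of $M_\fkp$'' via Ischebeck-type bounds. This step fails already for $q=1$: with $R=k\lb x,y\rb$ and $M=R/(x)$ one has $\depth R=2$ and $\depth_R M=1$, yet $M^*=0$, so $M$ is not torsionless. Ischebeck's inequality only yields $\Ext_R^i(\rmD(M),R)=0$ for $i<\depth R-\dim_R\rmD(M)$, and nothing in your hypotheses bounds $\dim_R\rmD(M)$ or $\dim_R M^*$. (Your citations of Propositions~\ref{prop2} and~\ref{1.6} for the base cases are also off target: those statements concern the canonical module, not an arbitrary $M$.) The same problem recurs verbatim in your $(3)\Rightarrow(1)$ at primes with $\depth_{R_\fkp}M_\fkp\ge q$.

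The paper does not attempt to show that $M_\fkp$ is $q$-torsionfree at every $\fkp$. Instead it assumes $E^q=\Ext_R^q(\rmD(M),R)\ne0$ and chooses $\fkp\in\Ass_RE^q$. Then $(3)$(i) forces $\depth_{R_\fkp}M_\fkp\ge q$ (otherwise $E^q_\fkp=0$), and $(3)$(ii) forces $\depth R_\fkp\ge q-1$; the decisive extra datum is that $\fkp$ is an \emph{associated} prime, so $\depth_{R_\fkp}E^q_\fkp=0$. With this in hand the induction on $q$ and the identification $E^q\cong\Ext_R^{q-2}(M^*,R)$ that you describe do go through: for $q\ge3$, dualizing a free resolution of $M^*$ produces an exact sequence $0\to M\to F_0^*\to\cdots\to F_{q-2}^*\to C\to0$ with $E^q\hookrightarrow C$, whence $\depth_{R_\fkp}C_\fkp=0$ and the depth lemma yields $\depth_{R_\fkp}M_\fkp=q-1$, a contradiction. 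The associated-prime choice is precisely the missing ingredient that turns your depth bounds into Ext-vanishing.
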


\begin{proof}
Without loss of generality, we may assume $q \ge 1$.

$(1) \Rightarrow (2)$ This follows from \cite[Proposition 2.1]{F}.

$(2) \Rightarrow (3)$ (i) Let $\fkp\in \Supp_R M$ with $\depth_{R_\fkp} M_\fkp < q$. Since $M$ satisfies $(\widetilde{\rmS}_{q})$, we have $\depth_{R_\fkp} M_\fkp\ge \depth R_\fkp$. This implies $\depth R_\fkp<q$, and hence $M_\fkp$ is $q$-torsionfree. (ii) Let $\fkp\in \Supp_R M$ with $\depth R_\fkp < q-1$. Then $M_\fkp$ is $q$-torsionfree, so that $\depth_{R_\fkp} M_\fkp=\depth R_\fkp$ by Lemma \ref{l1.7}.

$(3) \Rightarrow (1)$ 
For each $i\in\{1, 2,\dots,q\}$, we set $E^i = \Ext^i_R(\rmD(M),R)$.
Suppose $E^q\not=0$ and seek a contradiction. Take $\fkp\in \Ass_R E^q$. Since $\fkp \in \Supp_R M$, by (i) we have $\depth_{R_\fkp} M_\fkp\ge q$.
Then by (ii), $\depth R_\fkp \ge q-1$.
By passing to the localization $R_{\p}$ at $\fkp$, we may assume $R$ is a local ring, $\depth R\ge q-1$, $\depth_R M\ge q$, and $\depth_R E^q=0$.

We proceed by induction on $q$.
First, assume that $q=1$.
Since $E^1$ is isomorphic to a submodule of $M$, it follows that $\depth_R M=0$, a contradiction. Thus $E^1=(0)$.
Next, we assume $q= 2$. 
Applying the depth lemma to the exact sequence $0\to M\to M^{**}\to E^2\to 0$ of $R$-modules, we get $\depth_R M=1$, as $\depth_R M^{**}\ge 1$. This is impossible, whence $E^2=(0)$.
Suppose $q \ge 3$ and the assertion holds for $q-1$, i.e., $M$ is $(q-1)$-torsionfree. 
Hence $E^1=\cdots=E^{q-1}=(0)$.
Consider a free resolution $(F_i, \partial_i)$ of $M^{*}$. 
Applying the $R$-dual functor $(-)^*$, we get the exact sequence 
\[
0 \to M^{**} \to F_0^* \xrightarrow[]{\partial_1^*} F_1^* \to \cdots \to F_{q-3}^* \xrightarrow[]{\partial_{q-2}^*} F_{q-2}^*
\]
of $R$-modules because $E^3=\cdots=E^{q-1}=(0)$.
Let $C$ be the cokernel of $\partial_{q-2}^*$.
Since $M$ is reflexive as an $R$-module, we obtain the exact sequence of the form
\[
0 \to M \to F_0^* \to \cdots \to F_{q-2}^* \to C \to 0.
\]
Since $E^q=\Ext^{q-2}_R(M^*,R)$ may be regarded as a submodule of $C$, we see that $\depth_R C=0$. Hence $\depth_R M=q-1$. This gives a contradiction.
Hence we conclude that $E^q=(0)$, which shows $M$ is $q$-torsionfree.
\end{proof}

\begin{Corollary} \label{t1.5}
Suppose that the following conditions are satisfied{\rm \,:}
\begin{enumerate}[\rm (a)]
\item $M_\fkp$ is $q$-torsionfree for every $\fkp\in\Supp_R M$ with $\dim R_\fkp<q${\rm \,;}
\item $M$ satisfies $(\mathrm{S}_q)${\rm \,;}
\item $\depth R_\fkp \ge \min\{q-1,\dim R_\fkp-1\}$ for every $\fkp\in \Supp_R M$.
\end{enumerate}
Then $M$ is $q$-torsionfree.
\end{Corollary}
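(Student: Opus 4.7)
My plan is to invoke Theorem~\ref{t1.4}, specifically the equivalence (1)$\Leftrightarrow$(3), so that it suffices to verify conditions (3)(i) and (3)(ii) for $M$.

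For (3)(i), I would take $\fkp\in\Supp_R M$ with $\depth_{R_\fkp} M_\fkp<q$. Hypothesis (b) gives $\depth_{R_\fkp} M_\fkp\ge\min\{q,\dim R_\fkp\}$, and since the left side is strictly less than $q$, the minimum must equal $\dim R_\fkp$, forcing $\dim R_\fkp<q$. Then (a) directly yields that $M_\fkp$ is $q$-torsionfree, which is exactly what (3)(i) requires.

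For (3)(ii), I would take $\fkp\in\Supp_R M$ with $\depth R_\fkp<q-1$. Hypothesis (c) reads $\depth R_\fkp\ge\min\{q-1,\dim R_\fkp-1\}$; combined with $\depth R_\fkp<q-1$, the minimum cannot be $q-1$, so it must be $\dim R_\fkp-1$, and in particular $\dim R_\fkp<q$. Hypothesis (a) then forces $M_\fkp$ to be $q$-torsionfree, and hence $q$-syzygy. Since $\depth R_\fkp+2\le q$, I can apply Lemma~\ref{l1.7} to obtain $\depth_{R_\fkp} M_\fkp=\depth R_\fkp$, verifying (3)(ii). Theorem~\ref{t1.4} then concludes that $M$ is $q$-torsionfree.

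The main subtlety lies in (3)(ii): hypotheses (b) and (c) alone only guarantee $\depth_{R_\fkp} M_\fkp\ge\dim R_\fkp\ge\depth R_\fkp$, where the latter inequality can be strict by one when $R_\fkp$ fails to be Cohen--Macaulay, so the equality $\depth_{R_\fkp} M_\fkp=\depth R_\fkp$ is not immediate. It is the strong $q$-syzygy property furnished by (a) that, through Lemma~\ref{l1.7}, collapses this potential depth gap and produces the equality demanded by (3)(ii).
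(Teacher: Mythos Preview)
Your proof is correct and follows essentially the same approach as the paper's: both verify condition~(3) of Theorem~\ref{t1.4}, using (b) and (c) respectively to reduce (3)(i) and (3)(ii) to the case $\dim R_\fkp<q$, and then invoking (a) together with Lemma~\ref{l1.7} for the depth equality in~(ii). Your write-up is slightly more explicit in checking the hypothesis $q\ge\depth R_\fkp+2$ of Lemma~\ref{l1.7}, but otherwise the arguments coincide.
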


\begin{proof}
We will check condition (3) in Theorem \ref{t1.4}. Let $\fkp\in \Supp_R M$. 
(i) Assume that $\depth_{R_\fkp} M_\fkp<q$. By (b), $\depth_{R_\fkp} M_\fkp=\dim R_\fkp$, so that $\dim R_\fkp<q$. Therefore $M_\fkp$ is $q$-torsionfree by (a). (ii) Assume that $\depth R_\fkp<q-1$. By (c), $\depth R_\fkp \ge \dim R_\fkp-1$, so that $\dim R_\fkp<q$. Therefore $M_\fkp$ is $q$-torsionfree by (a), whence $\depth_{R_\fkp} M_\fkp=\depth R_\fkp$ by Lemma \ref{l1.7}.
\end{proof}

\begin{Corollary} \label{c1.6}
Suppose that the following conditions are satisfied{\rm \,:}
\begin{enumerate}[\rm (a)]
\item $M$ satisfies $(\mathrm{S}_q)${\rm \,;}
\item $R$ satisfies both $(\mathrm{G}_{q-1})$ and $(\mathrm{S}_{q-1})$ on $\Supp_R M$.
\end{enumerate}
Then $M$ is $q$-torsionfree.
\end{Corollary}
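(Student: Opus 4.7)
The plan is to deduce Corollary \ref{c1.6} directly from Corollary \ref{t1.5} by verifying its three hypotheses (a), (b), (c) from the assumptions $(\mathrm{S}_q)$ on $M$ and $(\mathrm{G}_{q-1}) + (\mathrm{S}_{q-1})$ on $R$ (restricted to $\Supp_R M$). Condition (b) of Corollary \ref{t1.5} is literally our hypothesis (a). Condition (c) of Corollary \ref{t1.5} asks that $\depth R_\fkp \ge \min\{q-1,\dim R_\fkp-1\}$ for every $\fkp \in \Supp_R M$, which is immediate from $(\mathrm{S}_{q-1})$ on $\Supp_R M$, since $\min\{q-1,\dim R_\fkp\} \ge \min\{q-1,\dim R_\fkp-1\}$.

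The substantive step is verifying condition (a) of Corollary \ref{t1.5}: for every $\fkp \in \Supp_R M$ with $\dim R_\fkp < q$, we need that $M_\fkp$ is $q$-torsionfree as an $R_\fkp$-module. Fix such a $\fkp$. By $(\mathrm{G}_{q-1})$ on $\Supp_R M$, the local ring $R_\fkp$ is Gorenstein. By $(\mathrm{S}_q)$ we have
\[
\depth_{R_\fkp} M_\fkp \ge \min\{q,\dim R_\fkp\} = \dim R_\fkp,
\]
and since $M_\fkp \ne 0$ forces $\depth_{R_\fkp} M_\fkp \le \dim R_\fkp$, we conclude $M_\fkp$ is maximal Cohen--Macaulay over the Gorenstein local ring $R_\fkp$.

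The hard part is then to argue that a maximal Cohen--Macaulay module over a Gorenstein local ring is $q$-torsionfree for every $q$. This is the key input I would invoke: over a Gorenstein local ring $A$, if $N$ is MCM then $\Ext_A^i(N,A)=0$ for all $i>0$ (via local duality or \cite[Theorem 3.3.10]{BH}), and the dual $N^{*}$ is again MCM. Applying this to both $M_\fkp$ and $M_\fkp^{*}$ gives $\Ext_{R_\fkp}^i(M_\fkp^{*},R_\fkp) = 0$ for all $i>0$ together with reflexivity of $M_\fkp$. In view of the Auslander--Bridger identification recalled after Definition \ref{def_q_torsionfree} (namely, $M$ is $q$-torsionfree iff $M$ is reflexive and $\Ext^{i}(M^{*},R)=0$ for $1\le i \le q-2$), this shows $M_\fkp$ is $q$-torsionfree (in fact, $r$-torsionfree for every $r$).

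Having verified all three conditions (a), (b), (c) of Corollary \ref{t1.5}, the conclusion that $M$ is $q$-torsionfree follows at once. I expect no additional obstacles: the only nontrivial move is the reduction at Gorenstein primes in $\Supp_R M$ of small dimension, which is a clean MCM-over-Gorenstein argument.
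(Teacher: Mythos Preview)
Your proof is correct and follows essentially the same approach as the paper: both reduce to Corollary \ref{t1.5} and verify its condition (a) by showing that at primes $\fkp \in \Supp_R M$ with $\dim R_\fkp < q$ the module $M_\fkp$ is maximal Cohen--Macaulay over the Gorenstein local ring $R_\fkp$, hence $q$-torsionfree. You are simply more explicit than the paper in checking conditions (b) and (c) of Corollary \ref{t1.5} and in justifying why MCM over Gorenstein implies $q$-torsionfree.
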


\begin{proof}
By Corollary \ref{t1.5}, it suffices to show that $M_\fkp$ is $q$-torsionfree on $\{\fkp\in\Supp_R M \mid \dim R_\fkp<q\}$.
Let $\fkp\in \Supp_R M$ with $\dim R_\fkp <q$.
Then by (b), $R_\fkp$ is Gorenstein.
As $M$ satisfies $(\mathrm{S}_q)$, $\depth_{R_\fkp} M_\fkp \ge\dim R_\fkp$.
Hence $M_\fkp$ is maximal Cohen-Macaulay as an $R_\fkp$-module.
In particular, $M_\fkp$ is $q$-torsionfree.
\end{proof}

We are now ready to prove Theorem \ref{p3.2}.

\begin{proof}[Proof of Theorem \ref{p3.2}]



$(1) \Rightarrow (2)$ This follows from Corollary \ref{c1.6}.

$(2) \Rightarrow (3)$ This follows from \cite[Proposition 2.1]{F}.

$ (3) \Rightarrow (1)$ By Theorem \ref{t1.8}, the ring $A$ satisfies $(\mathrm{S}_{q-1})$ on $\Supp_A \rmK_A$. Let $\fkp\in \Supp_A \rmK_A$ with $\dim A_\fkp \le q-1$. Then $A_\fkp$ is a Cohen-Macaulay ring of dimension at most $q-1$. Hence Theorem \ref{3.1} implies that $A_\fkp$ is a Gorenstein ring.
\end{proof}

As consequences of Theorem \ref{p3.2}, we get the following corollaries.

\begin{Corollary} \label{p3.5}
Let $A$ be a Noetherian local ring admitting the canonical module $\rmK_{A}$.
Suppose that $q\ge 2$ and $\rmK_A$ satisfies $(\mathrm{S}_q)$. Consider the following conditions{\rm \,:}
\begin{enumerate}[\rm (1)]
\item $\rmK_A$ is $(q+1)$-torsionfree{\rm \,;}
\item $\rmK_A$ is $(q+1)$-syzygy{\rm \,;}
\item $A$ satisfies both $(\mathrm{S}_q)$ and $(\mathrm{G}_{q-1})$ on $\Supp_A \rmK_A${\rm \,;}
\item $A$ satisfies both $(\mathrm{S}_q)$ and $(\mathrm{G}_{q-1})$, that is, $A$ is $q$-Gorenstein{\rm \,;}
\item $A$ satisfies $(\mathrm{S}_q)$ and $\rmK_A$ is $q$-torsionfree{\rm \,;}
\item $A$ satisfies $(\mathrm{S}_q)$ and $\rmK_A$ is $q$-syzygy.
\end{enumerate}
Then the implications {\rm (1)}$\Leftrightarrow${\rm (2)}$\Rightarrow${\rm (3)}$\Leftrightarrow${\rm (4)}$\Leftrightarrow${\rm (5)}$\Leftrightarrow${\rm (6)} hold true.
\end{Corollary}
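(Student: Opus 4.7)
The plan is to prove the chain $(1)\Leftrightarrow(2)\Rightarrow(3)\Leftrightarrow(4)\Leftrightarrow(5)\Leftrightarrow(6)$ by combining the main technical results of this section: Theorem \ref{t1.8}, Theorem \ref{p3.2}, Corollary \ref{c1.6}, Lemma \ref{l1.7}, Theorem \ref{t1.4}, and Foxby's Theorem \ref{3.1}.

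The implication $(1)\Rightarrow(2)$ is immediate since every $q$-torsionfree module is $q$-syzygy. For $(2)\Rightarrow(3)$, I apply Theorem \ref{t1.8} to the $(q+1)$-syzygy module $\rmK_A$, using the hypothesis that $\rmK_A$ satisfies $(\mathrm{S}_q)$, to deduce that $A$ satisfies $(\mathrm{S}_q)$ on $\Supp_A\rmK_A$. Since $(q+1)$-syzygy implies $q$-syzygy, Theorem \ref{p3.2} then gives $(\mathrm{G}_{q-1})$ on $\Supp_A\rmK_A$, yielding (3).

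The crucial implication $(2)\Rightarrow(1)$ is established by verifying condition (3) of Theorem \ref{t1.4} for $\rmK_A$ with $q+1$ in place of $q$. Part (ii) of that condition follows directly from Lemma \ref{l1.7} applied to the $(q+1)$-syzygy module $\rmK_A$. For part (i), at a prime $\fkp\in\Supp_A\rmK_A$ with $\depth_{A_\fkp}[\rmK_A]_\fkp<q+1$, the combination of $(\mathrm{S}_q)$ on $\rmK_A$, the $(q+1)$-syzygy property, and Theorem \ref{t1.8} constrains $A_\fkp$ so that (after a case analysis on $\dim A_\fkp$) it is Cohen-Macaulay of dimension at most $q$; Theorem \ref{3.1} then yields that $A_\fkp$ is $(q+1)$-Gorenstein. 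Since every localization of such $A_\fkp$ has depth at most $q<q+1$, $A_\fkp$ is itself Gorenstein, so $[\rmK_A]_\fkp\cong A_\fkp$ is free and in particular $(q+1)$-torsionfree.

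For the four-way equivalence $(3)\Leftrightarrow(4)\Leftrightarrow(5)\Leftrightarrow(6)$, I would use Corollary \ref{c1.6} to pass from the Gorenstein and Serre conditions on $\Supp_A\rmK_A$ to the $q$-torsionfreeness of $\rmK_A$, and Theorem \ref{p3.2} for the reverse direction, together with the trivial conversions between the $q$-torsionfree and $q$-syzygy properties of $\rmK_A$. The principal obstacle I anticipate is the globalization needed to move between $(3)$ and $(4)$, i.e.\ extending $(\mathrm{S}_q)$ and $(\mathrm{G}_{q-1})$ from $\Supp_A\rmK_A$ to all of $\Spec A$, as well as the handling in $(2)\Rightarrow(1)$ of any prime $\fkp$ with $\dim A_\fkp>q$. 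My plan for the globalization is to exploit the primary decomposition $(0)=U\cap L$ of $(0)$ in $A$, where $L$ is the intersection of the primary components corresponding to primes in $\Ass A\setminus\Assh A$ as in the proof of Lemma \ref{lem1}: for $\fkp\notin\Supp_A\rmK_A$ one has $L_\fkp=(0)$, hence $A_\fkp\cong(A/L)_\fkp$, reducing the verification of the Serre and Gorenstein conditions at such primes to a quotient of $A$ where the non-$\Assh$ components have been removed.
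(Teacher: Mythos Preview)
Your arguments for $(1)\Rightarrow(2)$, $(2)\Rightarrow(3)$, and the equivalence $(3)\Leftrightarrow(5)\Leftrightarrow(6)$ are essentially the paper's. The genuine gap is in your plan for $(3)\Rightarrow(4)$. The reduction $A_\fkp\cong(A/L)_\fkp$ for $\fkp\notin\Supp_A\rmK_A$ is correct, but it buys you nothing: hypothesis~(3) gives no information whatsoever about $A$ (or about $A/L$) at such primes, so there is no way to verify $(\mathrm{S}_q)$ or $(\mathrm{G}_{q-1})$ there by this route. The paper instead invokes \cite[Lemma~1.1]{AG}, which shows that once $A$ satisfies $(\mathrm{S}_2)$ on $\Supp_A\rmK_A$ (guaranteed here since $q\ge 2$), one already has $\Supp_A\rmK_A=\Spec A$. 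Thus there \emph{are} no primes outside $\Supp_A\rmK_A$, and (3) and (4) coincide trivially. This single external lemma is precisely the missing ingredient that dissolves your anticipated ``globalization'' obstacle.

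Your route to $(2)\Rightarrow(1)$ via condition~(3) of Theorem~\ref{t1.4} is more hands-on than the paper's, and the obstacle you flag at primes $\fkp$ with $\dim A_\fkp>q$ is real: combining $(\mathrm{S}_q)$ for $\rmK_A$, the $(\widetilde{\rmS}_{q+1})$ condition coming from $(q+1)$-syzygy, and $(\mathrm{S}_q)$ for $A$ on $\Supp_A\rmK_A$, one finds $\depth A_\fkp=q<\dim A_\fkp$, so $A_\fkp$ is \emph{not} Cohen-Macaulay and your appeal to Theorem~\ref{3.1} breaks down there. The paper sidesteps this case analysis entirely: having established (4) via the Aoyama--Goto lemma, it invokes \cite[Proposition~4.21]{AB} to pass directly from $(q+1)$-syzygy to $(q+1)$-torsionfree over the $q$-Gorenstein ring $A$.
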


\begin{proof}
The implications $(1) \Rightarrow (2)$ and $(4) \Rightarrow (3)$ are clear.
The equivalence of $(3)$, $(5)$, and $(6)$ immediately follows from Theorem \ref{p3.2}. Thus it suffices to check the implications $(2) \Rightarrow (3)$, $(3) \Rightarrow (4)$, and $(2) \Rightarrow (1)$.

$(2) \Rightarrow (3)$ Theorem \ref{p3.2} shows the ring $A$ satisfies $(\mathrm{G}_{q-1})$ on $\Supp_A \rmK_A$. On the other hand, by Theorem \ref{t1.8}, we deduce that $A$ satisfies $(\mathrm{S}_q)$ on $\Supp_A \rmK_A$.

$(3) \Rightarrow (4)$ Since $q\ge 2$, by \cite[Lemma 1.1]{AG} we have $\Supp_A \rmK_A=\Spec A$. 

$(2) \Rightarrow (1)$ The implication $(2) \Rightarrow (4)$ guarantees that $A$ is $q$-Gorenstein. Hence $\rmK_A$ is $q$-torsionfree by \cite[Proposition 4.21]{AB}.
\end{proof}

Since $\rmK_A$ satisfies $(\mathrm{S}_2)$, from Corollary \ref{p3.5} we have the following. 


\begin{Corollary}
Let $A$ be a Noetherian local ring admitting the canonical module $\rmK_{A}$.
Consider the following conditions{\rm \,:}
\begin{enumerate}[\rm (1)]
\item $\rmK_A$ is $3$-torsionfree{\rm \,;}
\item $\rmK_A$ is $3$-syzygy{\rm \,;}
\item $A$ satisfies both $(\mathrm{S}_2)$ and $(\mathrm{G}_1)$, that is, $A$ is quasi-normal{\rm \,;}
\item $A$ satisfies $(\mathrm{S}_2)$ and $\rmK_A$ is $2$-torsionfree{\rm \,;}
\item $A$ satisfies $(\mathrm{S}_2)$ and $\rmK_A$ is $2$-syzygy.
\end{enumerate}
Then the implications {\rm (1)}$\Leftrightarrow${\rm (2)}$\Rightarrow${\rm (3)}$\Leftrightarrow${\rm (4)}$\Leftrightarrow${\rm (5)} hold true.
\end{Corollary}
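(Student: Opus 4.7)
The plan is to deduce this corollary as a direct specialization of the preceding Corollary \ref{p3.5} to the case $q=2$. The key input is that the canonical module $\rmK_A$ automatically satisfies Serre's condition $(\rmS_2)$ by Proposition \ref{P1}(4). Therefore the standing hypothesis ``$\rmK_A$ satisfies $(\rmS_q)$'' required by Corollary \ref{p3.5} is free when $q=2$, and Corollary \ref{p3.5} applies to every Noetherian local ring admitting a canonical module.

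After making this substitution, the six conditions listed in Corollary \ref{p3.5} reduce to the five conditions here. Explicitly, conditions (1), (2), (5), and (6) of Corollary \ref{p3.5} match conditions (1), (2), (4), and (5) of the present corollary, respectively, while conditions (3) and (4) of Corollary \ref{p3.5} both correspond to our condition (3). The identification of these last two is the only step that demands a short argument, but it is already carried out inside the proof of Corollary \ref{p3.5}: once $A$ is assumed to satisfy $(\rmS_2)$, an application of \cite[Lemma 1.1]{AG} forces $\Supp_A \rmK_A = \Spec A$, so that the qualification ``on $\Supp_A \rmK_A$'' becomes vacuous and $(\rmG_1)$ on $\Supp_A \rmK_A$ coincides with $(\rmG_1)$.

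With these identifications in place, the chain (1)$\Leftrightarrow$(2)$\Rightarrow$(3)$\Leftrightarrow$(4)$\Leftrightarrow$(5)$\Leftrightarrow$(6) established by Corollary \ref{p3.5} transcribes verbatim to the asserted chain (1)$\Leftrightarrow$(2)$\Rightarrow$(3)$\Leftrightarrow$(4)$\Leftrightarrow$(5) of the present corollary. Since the substantive work has already been done in Corollary \ref{p3.5}, the argument here is essentially bookkeeping, and there is no real obstacle: the only care needed is to verify that the $(\rmS_2)$ hypothesis sitting inside our conditions (3)--(5) correctly absorbs the ``on $\Supp_A \rmK_A$'' qualifier that appears in one of the equivalent formulations of Corollary \ref{p3.5}.
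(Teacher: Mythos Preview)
Your proposal is correct and follows exactly the paper's approach: the paper's proof is the single sentence ``Since $\rmK_A$ satisfies $(\mathrm{S}_2)$, from Corollary \ref{p3.5} we have the following,'' and you have simply unpacked this by noting that $\rmK_A$ satisfies $(\rmS_2)$ (Proposition \ref{P1}(4)) so that Corollary \ref{p3.5} applies with $q=2$, and then carefully matching the conditions. The extra bookkeeping you provide---in particular the observation that conditions (3) and (4) of Corollary \ref{p3.5} coalesce into the present condition (3) via \cite[Lemma 1.1]{AG}---is already handled inside the proof of Corollary \ref{p3.5} itself, so your write-up is a faithful expansion of the paper's one-line deduction.
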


\begin{Corollary}
Let $A$ be a Noetherian local ring admitting the canonical module $\rmK_{A}$.
Suppose that $q\ge 2$ and $\rmK_A$ satisfies $(\mathrm{S}_{q+1})$.
Then the following conditions are equivalent{\rm \,:}
\begin{enumerate}[\rm (1)]
\item $\rmK_A$ is $(q+1)$-torsionfree{\rm \,;}
\item $\rmK_A$ is $(q+1)$-syzygy{\rm \,;}
\item $A$ satisfies both $(\mathrm{S}_q)$ and $(\mathrm{G}_q)$ on $\Supp_A \rmK_A${\rm \,;}
\item $A$ satisfies both $(\mathrm{S}_q)$ and $(\mathrm{G}_q)$.
\end{enumerate}
\end{Corollary}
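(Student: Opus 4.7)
My plan is to reduce the corollary directly to Theorem~\ref{p3.2} applied with the parameter $q+1$ in place of $q$. Since the standing hypothesis here is that $\rmK_A$ satisfies $(\mathrm{S}_{q+1})$, Theorem~\ref{p3.2} immediately yields the equivalence of (1), (2), and the statement that $A$ satisfies both $(\mathrm{G}_q)$ and $(\mathrm{S}_q)$ on $\Supp_A \rmK_A$, which is exactly condition (3). Thus (1)$\iff$(2)$\iff$(3) comes for free.

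For the remaining equivalence (3)$\iff$(4), I would first observe that under the standing hypothesis, the locus $\Supp_A \rmK_A$ is already all of $\Spec A$. Indeed, since $q+1 \ge 3$, the module $\rmK_A$ satisfies $(\mathrm{S}_{q+1})$ and in particular $(\mathrm{S}_2)$, so the same globalization step used in the proof of Corollary~\ref{p3.5} applies: by \cite[Lemma 1.1]{AG} one obtains $\Supp_A \rmK_A = \Spec A$ (equivalently, $\Min A = \Assh A$ via Proposition~\ref{P1}(6)). Consequently ``on $\Supp_A \rmK_A$'' and ``globally'' mean the same thing, so (3)$\iff$(4) is automatic.

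Neither step presents any genuine obstacle: the heavy lifting is entirely contained in Theorem~\ref{p3.2} and in \cite[Lemma 1.1]{AG}. The only things to verify are that the $(\mathrm{S}_{q+1})$-assumption on $\rmK_A$ lines up correctly with the hypothesis of Theorem~\ref{p3.2} at parameter $q+1$, and that the bound $q \ge 2$ (hence $q+1 \ge 3 \ge 2$) is enough to invoke \cite[Lemma 1.1]{AG}; both checks are immediate.
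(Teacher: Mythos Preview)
Your reduction of (1)$\iff$(2)$\iff$(3) to Theorem~\ref{p3.2} with parameter $q+1$ is exactly the paper's argument, and the identification of \cite[Lemma 1.1]{AG} as the tool for (3)$\iff$(4) is also correct. However, there is a genuine gap in how you invoke that lemma.

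You claim that $\Supp_A\rmK_A=\Spec A$ follows from the \emph{standing hypothesis} that $\rmK_A$ satisfies $(\mathrm{S}_{q+1})$, hence $(\mathrm{S}_2)$. But $\rmK_A$ \emph{always} satisfies $(\mathrm{S}_2)$ (Proposition~\ref{P1}(4)), so this cannot possibly force $\Supp_A\rmK_A=\Spec A$. Concretely, in Example~\ref{ex1} (or Example~\ref{ex3}) one has $\rmK_A\cong A/(x)\cong k\lb Y,Z\rb$, which satisfies $(\mathrm{S}_n)$ for every $n$, yet $(y,z)\in\Spec A\setminus\Supp_A\rmK_A$. So the standing hypothesis alone does not give the equality, and the ``globalization step'' from Corollary~\ref{p3.5} does not apply in the way you describe: there the input to \cite[Lemma 1.1]{AG} is that \emph{$A$} satisfies $(\mathrm{S}_2)$ on $\Supp_A\rmK_A$, not that $\rmK_A$ does.

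The fix is easy and is what the paper (tersely) intends: the implication (4)$\Rightarrow$(3) is trivial, and for (3)$\Rightarrow$(4) you use condition (3) itself. Since $q\ge 2$, condition (3) says in particular that $A$ satisfies $(\mathrm{S}_2)$ on $\Supp_A\rmK_A$; now \cite[Lemma 1.1]{AG} yields $\Supp_A\rmK_A=\Spec A$, and (4) follows. So the equality $\Supp_A\rmK_A=\Spec A$ is a consequence of the conditions being compared, not of the ambient hypothesis.
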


\begin{proof}
This follows from Theorem \ref{p3.2} and the fact that $\Supp_A \rmK_A=\Spec A$ (\cite[Lemma 1.1]{AG}). 
\end{proof}

\begin{Corollary}
Let $A$ be a Noetherian local ring with $d = \dim A$ which is a homomorphic image of a Gorenstein ring. Suppose that $q\ge \frac{d}{2}+1$ and $\rmK_A$ is $(q+1)$-syzygy satisfying $(\mathrm{S}_q)$. Then $A$ is a Cohen-Macaulay ring.
\end{Corollary}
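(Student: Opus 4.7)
The plan is to combine Corollary \ref{p3.5} with a dimension induction. First, since $\rmK_A$ is $(q+1)$-syzygy and satisfies $(\mathrm{S}_q)$ with $q \geq d/2 + 1 \geq 2$, the implication (2) $\Rightarrow$ (4) of Corollary \ref{p3.5} yields that $A$ is $q$-Gorenstein; in particular $A$ satisfies $(\mathrm{S}_q)$, so $\depth A \geq \min\{q, d\}$. If $q \geq d$ this already gives $\depth A \geq d$, and $A$ is Cohen-Macaulay. Hence we may assume $q < d$, which combined with $q \geq d/2+1$ forces $d \leq 2q-2$.

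I would then induct on $d$. For any prime $\fkp \neq \fkm$, localization preserves the entire setup: $\rmK_{A_\fkp} = (\rmK_A)_\fkp$ is $(q+1)$-syzygy over $A_\fkp$ and satisfies $(\mathrm{S}_q)$, the ring $A_\fkp$ is still a homomorphic image of a Gorenstein ring, and $\dim A_\fkp \leq d - 1 \leq 2q - 3$ still satisfies $q \geq (\dim A_\fkp)/2 + 1$. By the inductive hypothesis, $A_\fkp$ is Cohen-Macaulay for every such $\fkp$. Thus $A$ has Cohen-Macaulay punctured spectrum, so $A$ is a generalized Cohen-Macaulay local ring and $\rmH^i_\fkm(A)$ has finite length for all $i < d$.

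The final step is to show $A$ is Cohen-Macaulay at $\fkm$. I would exploit the fact that for a generalized Cohen-Macaulay quotient of a Gorenstein ring, Schenzel's depth formula gives $\depth \rmK_A = \min\{d, \depth A + 2\}$, together with the criterion (for quotients of Gorenstein rings) that $A$ is Cohen-Macaulay can be detected by sufficient depth and syzygy conditions on $\rmK_A$. Applying local cohomology to the exact sequence $0 \to \rmK_A \to F_0 \to F_1 \to \cdots \to F_q$ and iterating the resulting long exact sequences, the vanishing $\rmH^i_\fkm(F_j) = 0$ for $i < \depth A$ propagates: one obtains $\rmH^j_\fkm(\rmK_A) \cong \rmH^{j-k-1}_\fkm(C_k)$ (or quotients thereof) for appropriate ranges, where $C_k$ are the cokernels of the syzygy filtration. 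Using $\depth A \geq q$ and $d \leq 2q-2$, the index $j-q-1$ can be driven low enough that the resulting module is forced to vanish.

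The main obstacle is the very last step: closing the gap between $\depth A + 2 \geq q + 2$ (Schenzel's formula) and $d$, which can be positive when $d > q+2$. This is where the precise numerical hypothesis $q \geq d/2 + 1$ becomes essential, since it ensures $q+1$ steps of local-cohomology vanishing in the syzygy sequence reach into the range needed to annihilate all lower local cohomology of $A$. Once the resulting bound forces $\rmH^i_\fkm(A) = 0$ for every $i < d$, $A$ is Cohen-Macaulay, completing the induction.
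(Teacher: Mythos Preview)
Your first two paragraphs are sound: invoking Corollary~\ref{p3.5} to obtain that $A$ satisfies $(\mathrm S_q)$ is correct (modulo the trivial cases $d\le 1$), and the induction on $d$ legitimately shows that $A_\fkp$ is Cohen--Macaulay for every $\fkp\ne\fkm$, so that $A$ is generalized Cohen--Macaulay with $\depth A\ge q$.

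The genuine gap lies entirely in the final step. First, the formula $\depth\rmK_A=\min\{d,\depth A+2\}$ that you attribute to Schenzel is false: for a generalized Cohen--Macaulay ring with $\depth A\ge 2$ and $\rmH^{d-1}_\fkm(A)\ne 0$ one has $\depth\rmK_A=2$, not $\depth A+2$. Second, the local-cohomology chase through the syzygy filtration of $\rmK_A$ yields relations among the modules $\rmH^i_\fkm(N_k)$ and $\rmH^i_\fkm(\rmK_A)$, but gives no mechanism for forcing $\rmH^i_\fkm(A)=0$ in the range $q\le i<d$: the free modules in the filtration contribute copies of $\rmH^i_\fkm(A)$, which is precisely what you are trying to show vanishes. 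You flag this yourself as ``the main obstacle'', and the sketch does not resolve it.

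The paper's proof is entirely different and needs neither the induction nor the generalized Cohen--Macaulay reduction. From $(\mathrm S_q)$ one has $\depth A\ge q$; since $\rmK_A$ satisfies $(\mathrm S_q)$, every $A$-regular sequence of length at most $q$ is also $\rmK_A$-regular. The conclusion is then immediate from \cite[Corollary~(2.6)]{FFGR} (equivalently \cite[Proposition~4.2]{F1}): for a homomorphic image of a Gorenstein ring, the existence of such a common regular sequence of length exceeding $d/2$ forces $A$ to be Cohen--Macaulay. This nontrivial external input is exactly the missing ingredient in your argument. Your route can in fact be salvaged---the \emph{correct} Schenzel duality $\rmH^i_\fkm(\rmK_A)\cong\rmH^{d+1-i}_\fkm(A)^\vee$ for $2\le i\le d-1$ in the generalized Cohen--Macaulay case, combined with $\depth\rmK_A\ge q$ (from $(\widetilde{\rmS}_{q+1})$), yields $2q\le d+1$ whenever $A$ is not Cohen--Macaulay, contradicting $q\ge d/2+1$---but this amounts to reproving the FFGR result and is not what you wrote.
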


\begin{proof}
By Theorem \ref{t1.8}, we see that $A$ satisfies $(\mathrm{S}_q)$. We may assume $d>0$. Then $q\ge 2$, so that $A$ is equidimensional by \cite[Lemma 1.1]{AG}. Furthermore, either $A$ is Cohen-Macaulay or $\depth A\ge q$.
We assume $\depth A\ge q$.
Since $\rmK_A$ satisfies $(\mathrm{S}_q)$, 
every $A$-regular sequence of length at most $q$ is $\rmK_A$-regular (\cite[Proposition 2.1]{F}).
The assertion follows from \cite[Corollary (2.6)]{FFGR} (see also \cite[Proposition 4.2]{F1}).
\end{proof}


\section{Examples of $q$-Gorenstein rings}

Closing this paper, in order to illustrate our theorems, we provide additional examples of Cohen-Macaulay and $q$-Gorenstein rings, i.e., rings with $(\rmS_q)$ and $(\rmG_{q-1})$ conditions, or equivalently, rings with $(\widetilde{\rmG}_{q-1})$ condition. 

\begin{Theorem}\label{5.1}
Let $A$ be a Gorenstein local ring with $d= \dim A \ge 3$ and let $a_1, a_2, \ldots, a_d$ be a system of parameters of $A$. Let $\fka = (a_1, a_2, \ldots, a_\ell)$~$(3 \le \ell \le d)$ and let $$\calR = A[a_1t, a_2t, \ldots, a_\ell t] \  \subseteq \ A[t]$$ be the Rees algebra of $\fka$, where $t$ denotes an indeterminate. Then,  $\calR$ is not a Gorenstein ring, but it is a Cohen-Macaulay $(\ell + 1)$-Gorenstein ring of dimension $d+1$.
\end{Theorem}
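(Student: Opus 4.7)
The plan is to establish three assertions: (i) $\calR$ is Cohen--Macaulay of dimension $d+1$; (ii) $\calR$ is not Gorenstein; and (iii) $\calR$ satisfies $(\rmG_\ell)$, which combined with Cohen--Macaulayness gives $(\widetilde{\rmG}_\ell)$, that is, the $(\ell+1)$-Gorenstein property.

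For (i), since $a_1,\ldots,a_d$ is a system of parameters in the Cohen--Macaulay ring $A$, the truncation $a_1,\ldots,a_\ell$ is $A$-regular, so $\fka$ is generated by a regular sequence, and the Rees algebra of such an ideal in a Cohen--Macaulay ring is well known to be Cohen--Macaulay, of dimension $d+1$ because $\h\fka=\ell\ge 1$. For (ii), I present $\calR\cong T/\mathbf{I}_2(\mathbb{M})$ with $T=A[X_1,\ldots,X_\ell]$ and $\mathbb{M}=\left(\begin{smallmatrix} a_1 & \cdots & a_\ell \\ X_1 & \cdots & X_\ell\end{smallmatrix}\right)$; the Eagon--Northcott complex resolves $\calR$ over $T$ with length $\ell-1$ and top free module of rank $\ell-1$. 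Localized at the irrelevant maximal ideal $P:=\fkm\calR+(a_1t,\ldots,a_\ell t)\calR$, the resolution remains minimal, so $\calR_P$ has Cohen--Macaulay type $\ell-1\ge 2$ and is not Gorenstein.

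For (iii), the key claim is that the non-Gorenstein locus of $\calR$ is contained in $V(\fka\calR+\calR_+)$, where $\calR_+:=(a_1t,\ldots,a_\ell t)\calR$. If $P\in\Spec\calR$ does not contain $\fka\calR+\calR_+$, then either (a) some $a_i\notin P\cap A=:\fkp$, whereby $\fka A_\fkp=A_\fkp$ forces $\calR\otimes_A A_\fkp=A_\fkp[t]$ to be a polynomial ring over the Gorenstein $A_\fkp$, hence $\calR_P$ is Gorenstein; or (b) some $a_jt\notin P$, say $j=1$, and by a direct $\mathbb{Z}$-graded computation on the homogeneous localization one obtains $\calR[1/(a_1t)]=B[a_1t,(a_1t)^{-1}]$ with $B=A[a_j/a_1:j=2,\ldots,\ell]\cong A[y_2,\ldots,y_\ell]/(a_j-a_1y_j:j=2,\ldots,\ell)$. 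The pivotal lemma is that $(a_j-a_1y_j)_{j=2}^\ell$ forms a regular sequence in the Cohen--Macaulay polynomial ring $A[y_2,\ldots,y_\ell]$: augmenting by $a_1$ and reducing modulo $a_1$ yields $(a_2,\ldots,a_\ell)$, which is regular since $a_1,\ldots,a_\ell$ is, and permutation (valid in Cohen--Macaulay rings) gives the desired regularity. Hence $B$ is Gorenstein (complete intersection in a Gorenstein polynomial ring), so is the Laurent polynomial ring $B[a_1t,(a_1t)^{-1}]$, and therefore $\calR_P$ is Gorenstein. To finish, the graded computation $\calR/(\fka\calR+\calR_+)=A/\fka$ (positive graded pieces vanish since $\fka^n+\fka^n=\fka^n$) shows this locus has dimension $d-\ell$; since $\calR$ is equidimensional Cohen--Macaulay of dimension $d+1$ (as $\fka$ avoids the minimal primes of $A$), every $P\supseteq\fka\calR+\calR_+$ satisfies $\h P\ge(d+1)-(d-\ell)=\ell+1$, establishing $(\rmG_\ell)$. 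The principal technical hurdle is the regular-sequence argument for $(a_j-a_1y_j)$ and the explicit identification $\calR[1/(a_1t)]=B[a_1t,(a_1t)^{-1}]$ via unpacking the graded structure of the homogeneous localization.
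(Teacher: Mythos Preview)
Your proof is correct and follows essentially the same strategy as the paper: both identify the non-Gorenstein locus of $\calR$ with $V\big((a_1,\ldots,a_\ell)+(a_1t,\ldots,a_\ell t)\big)$, verify Gorensteinness outside this locus by inverting a generator and reducing the defining ideal to the regular sequence $(a_j-a_1y_j)_{j\ge 2}$, and finish with a height count. The paper carries this out upstairs in $S=A[X_1,\ldots,X_\ell]$ (its Claim~2 is exactly your case~(b), and your case~(a) is what the paper's ``we may assume $X_1\notin P$'' sweeps under the rug), while you work directly in $\calR$ and supply the Eagon--Northcott computation of the type, which the paper omits entirely. One small point: your phrase ``permutation (valid in Cohen--Macaulay rings)'' should really be invoked after localizing at a prime, since regular sequences need not permute in non-local rings; but since Gorensteinness is local this is harmless.
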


\begin{proof}
Recall that $\calR$ is a Cohen-Macaulay ring of dimension $d+1$. Let $S = A[X_1, X_2, \ldots, X_\ell]$ be the polynomial ring over $A$ and let $\varphi : S \to \calR$ denote the surjective homomorphism of $A$-algebras defined by $\varphi(X_i) = a_it$ for each $1 \le i \le \ell$. The homomorphism $\varphi$ preserves the grading and ${\rm Ker}( \varphi) = \mathbf{I}_2\left(\begin{smallmatrix}
X_1&X_2&\ldots&X_\ell\\
a_1&a_2&\ldots&a_\ell\\
\end{smallmatrix}\right)$
is the perfect ideal of $S$ of grade $\ell - 1$ generated by the $2 \times 2$ minors of the matrix $\left(\begin{smallmatrix}
X_1&X_2&\ldots&X_\ell\\
a_1&a_2&\ldots&a_\ell\\
\end{smallmatrix}\right)$. We set $I ={\rm Ker}(\varphi)$. We then have the following.

\begin{claim}
Let $P \in \Spec S$ such that $I \subseteq P$ but $(X_1, X_2, \ldots, X_\ell)+(a_1, a_2, \ldots, a_\ell) \not\subseteq P$. Then, $S_P/IS_P$ is a Gorenstein ring.
\end{claim}

\begin{proof}[Proof of Claim 2]
We may assume that $X_1 \not\in P$. Let $\widetilde{S}=S[\frac{1}{X_1}]$, $\widetilde{A}=A[X_1, \frac{1}{X_1}]$, and $Y_i= \frac{X_i}{X_1}$ for $2 \le i \le \ell$. Then, $\widetilde{S} = \widetilde{A}[Y_2, Y_3, \ldots, Y_\ell]$ and $I\widetilde{S}= (a_i - a_1Y_i \mid 2 \le i \le \ell)\widetilde{S}$. Because $a_1 \widetilde{S}+(a_i - a_1Y_i \mid 2 \le i \le \ell)\widetilde{S} =(a_i \mid 1 \le i \le \ell)\widetilde{S}$ and $a_1, a_2, \ldots, a_\ell$ is an $\widetilde{S}$-regular sequence, the sequence $a_2-a_1Y_2, a_3 - a_1Y_3, \ldots, a_\ell - a_1Y_\ell$ is $\widetilde{S}_P$-regular, so that $S_P/IS_P$ is a Gorenstein ring.
\end{proof}

Let $P \in \Spec S$ and suppose that $I \subseteq P$. We set $\fkp = \varphi(P) \in \Spec \calR$. Then, $(X_1, X_2, \ldots, X_\ell) + (a_1, a_2, \ldots, a_\ell) \not\subseteq P$ if $\h_SP < 2\ell$, while 
$$\h_\calR \fkp = \h_{S/I}P/I= \h_SP - (\ell - 1).$$ Therefore, if $\h_\calR\fkp < \ell +1$, then $\h_SP - (\ell - 1) < \ell + 1$, that is $\h_SP < 2\ell$, so that $(X_1, X_2, \ldots, X_\ell) +(a_1, a_2, \ldots, a_\ell) \not\subseteq P$, whence $\calR_\fkp = S_P/IS_P$ is a Gorenstein ring by Claim 2. Thus, $\calR$ is an $(\ell+1)$-Gorenstein ring.
\end{proof}

Since the proofs of the following assertions are standard, we left them to the interested readers.

\begin{Lemma}
Let $\varphi : A \to B$ be a flat local homomorphism of Noetherian local rings and $q \ge 1$ be an integer. Then the following conditions are equivalent{\rm \,:}
\begin{enumerate}[$(1)$]
\item $B$ is a $q$-Gorenstein ring{\rm \,;}
\item $A$ is a $q$-Gorenstein ring and $B_P/\fkp B_P$ is a Gorenstein ring for every $P \in \Spec B$ with $\depth B_P < q$, where $\fkp = \varphi^{-1}(P)$.
\end{enumerate}
\end{Lemma}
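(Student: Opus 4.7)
The plan is to invoke the two standard ascent/descent properties of flat local homomorphisms $\varphi : A \to B$. Namely, for every $P \in \Spec B$ with $\fkp = \varphi^{-1}(P)$, one has
\[
\depth B_P = \depth A_{\fkp} + \depth (B_P/\fkp B_P),
\]
and moreover $B_P$ is Gorenstein if and only if $A_{\fkp}$ is Gorenstein and the fiber $B_P/\fkp B_P$ is Gorenstein (cf.\ \cite{BH}). I will invoke both freely.

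For $(1) \Rightarrow (2)$, let $P \in \Spec B$ with $\depth B_P < q$. Since $B$ is $q$-Gorenstein, $B_P$ is Gorenstein, and the fiber criterion yields that both $A_{\fkp}$ and $B_P/\fkp B_P$ are Gorenstein. This handles the fiber condition in $(2)$. To see that $A$ itself is $q$-Gorenstein, pick any $\fkp \in \Spec A$ with $\depth A_{\fkp} < q$ and choose a prime $P$ of $B$ minimal over $\fkp B$. Since $\varphi$ is flat, $\varphi^{-1}(P) = \fkp$, and the fiber $B_P/\fkp B_P$ is Artinian, hence has depth zero. The depth formula then gives $\depth B_P = \depth A_{\fkp} < q$, so $B_P$ is Gorenstein by hypothesis, and therefore $A_{\fkp}$ is Gorenstein.

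For $(2) \Rightarrow (1)$, let $P \in \Spec B$ with $\depth B_P < q$ and set $\fkp = \varphi^{-1}(P)$. The depth formula gives $\depth A_{\fkp} \le \depth B_P < q$, so $A_{\fkp}$ is Gorenstein because $A$ is $q$-Gorenstein. By hypothesis the fiber $B_P/\fkp B_P$ is also Gorenstein, and the fiber criterion then forces $B_P$ to be Gorenstein. Hence $B$ is $q$-Gorenstein.

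The only subtle point is the choice of $P$ in $(1) \Rightarrow (2)$: using the going-down property of flat morphisms, a prime $P$ of $B$ minimal over $\fkp B$ satisfies $\varphi^{-1}(P) = \fkp$, and this is what allows us to translate the depth condition on $A_{\fkp}$ into a depth condition on some $B_P$ at which the hypothesis of $(1)$ applies. Everything else amounts to bookkeeping with the two displayed formulas.
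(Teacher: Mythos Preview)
Your argument is correct and is exactly the standard one the paper has in mind: the authors explicitly omit the proof, writing that it is ``standard'' and left to the reader, so there is nothing further to compare. The two ingredients you invoke (the depth additivity formula and the ascent/descent of the Gorenstein property along flat local maps) are precisely what is needed, and your use of going-down to contract a minimal prime over $\fkp B$ back to $\fkp$ is the right way to pass the depth bound from $A$ to $B$ in the direction $(1)\Rightarrow(2)$.
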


\begin{Proposition}\label{5.4} 
Let $R$ be a Noetherian ring. Then the following assertions hold true.
\begin{enumerate}[$(1)$]
\item Let $q \ge 1$ be an integer. Then $R[t]$ is a $q$-Gorenstein ring if and only if $R$ is a $q$-Gorenstein ring, where $t$ is an indeterminate.
\item Let $H$ be a symmetric numerical semigroup. If $R$ is a $q$-Gorenstein ring, then the semigroup ring $R[H]$ of $H$ over $R$ is a $q$-Gorenstein ring. 
\item Let $X = \{X_{ij}\}_{1 \le i \le \ell, 1 \le j \le m}$ be indeterminates where $\ell, m \ge 2$, and set $T=R[X]$.  Let $t$ be an integer such that $2 \le t \le \min~\{\ell, m \}$ and let $I =\mathbf{I}_t(X)$ denote the ideal of $S$ generated by the $t \times t$ minors of the matrix $X$. We set $S =T/I$.
\begin{enumerate}
\item[$(\rm{a})$] Let $\ell = m$. If $R$ is a $q$-Gorenstein ring, then $S$ is a $q$-Gorenstein ring. 
\item[$(\rm{b})$] Suppose that $R$ is a field and let $t=2$. Then $S$ is a $d$-Gorenstein ring, where $d =\ell + m -1$.
\end{enumerate}
\end{enumerate}
\end{Proposition}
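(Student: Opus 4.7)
The common tool across all four statements is the preceding Lemma, which for a flat local map $A \to B$ of Noetherian local rings equates $q$-Gorensteinness of $B$ with $q$-Gorensteinness of $A$ plus Gorensteinness of the fibers $B_P / \fkp B_P$ at primes $P$ with $\depth B_P < q$. In every part, I would exhibit the relevant extension as flat, identify the fibers, verify they are Gorenstein, and then apply the Lemma one prime at a time (equivalently, one maximal ideal of the larger ring at a time); since the defining conditions $(\mathrm{S}_q)$ and $(\mathrm{G}_{q-1})$ are local, this suffices to descend and ascend the global $q$-Gorenstein property.

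For (1), the inclusion $R \hookrightarrow R[t]$ is flat with fibers that are either a residue field $\kappa(\fkp)$ (when $P = \fkp R[t]$) or a localization of $\kappa(\fkp)[t]$ at a maximal ideal (a DVR), hence Gorenstein; combined with the flat base-change identity $\depth R[t]_P = \depth R_\fkp + \depth(R[t]_P / \fkp R[t]_P)$, both implications drop out of the Lemma, with the converse using further that any $\fkp \in \Spec R$ lifts to $\fkp R[t] \in \Spec R[t]$ with the same depth. For (2), $R[H] = \bigoplus_{h \in H} R t^h$ is $R$-free, hence flat, and the fiber $\kappa(\fkp)[H]$ is a one-dimensional Gorenstein ring by Kunz's classical characterization: $k[H]$ is Gorenstein iff $H$ is symmetric. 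Since localizations preserve the Gorenstein property, the Lemma applies at each maximal ideal of $R[H]$ and yields the conclusion.

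For (3a), flatness of $R \to R[X]/I_t(X)$ comes from the standard-monomial basis theorem (see Bruns--Vetter, \emph{Determinantal Rings}), which exhibits $\mathbb{Z}[X]/I_t(X)$ as a free $\mathbb{Z}$-module so that $R[X]/I_t(X) \cong R \otimes_{\mathbb{Z}} \mathbb{Z}[X]/I_t(X)$ is $R$-free; each fiber $\kappa(\fkp)[X]/I_t(X)$ for $\ell = m$ is Gorenstein by a classical theorem (Svanes). Part (3b) is genuinely different: with $R$ already a field no flat descent is needed, and one must verify directly that $S = k[X]/I_2(X)$ is Gorenstein at every $\fkp \in \Spec S$ with $\depth S_\fkp < d = \ell + m - 1$. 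Since $S$ is a Cohen--Macaulay standard graded $k$-algebra of dimension $d$, this condition picks out exactly the non-irrelevant primes; the Jacobian ideal generated by the partials of the $2 \times 2$ minors of $X$ reduces to the irrelevant ideal (equivalently, the Segre embedding of $\mathbb{P}^{\ell-1} \times \mathbb{P}^{m-1}$ is smooth), so $S_\fkp$ is in fact regular, hence Gorenstein, for every such $\fkp$.

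The main obstacle is (3a), where both flatness (via the standard-monomial basis theorem) and Gorensteinness of the fiber $\kappa(\fkp)[X]/I_t(X)$ in the square case are classical but nontrivial inputs — one either cites them or sketches them. By contrast, (1), (2), and (3b) reduce, once the Lemma is accepted, to routine base-change depth arithmetic, Kunz's symmetry characterization, and smoothness of the Segre embedding respectively.
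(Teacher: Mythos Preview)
The paper does not prove this proposition: it states that the arguments are standard and leaves them to the reader, placing the flat-local Lemma immediately beforehand as the intended tool. Your proposal uses exactly that Lemma in the way the authors signal and is correct in every part; the only slip is that in (1) the fiber at the prime $P=\fkp R[t]$ is the field $\kappa(\fkp)(t)$ rather than $\kappa(\fkp)$, which is harmless.
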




\begin{thebibliography}{99}

\bibitem{AB}{M. Auslander and M. Bridger, Stable module theory, Amer. Math. Soc., Memoirs, {\bf 94}, 1969.}



\bibitem{A}{Y. Aoyama, Some basic results on canonical modules, J. Math. Kyoto Univ, {\bf 23} (1983), no. 1, 85--94.}

\bibitem{AG}
{Y. Aoyama and S. Goto}, On the endomorphism ring of the canonical module. J. Math. Kyoto Univ., {\bf 25} (1985), no. 1, 21--30. 




\bibitem{BGHV}{J. Brennan, L. Ghezzi, J. Hong and W. V. Vasconcelos, Generalization of bi-canonical degrees, S${\tilde a}$o Paulo J. Math. Sci. (2022), https://doi.org/10.1007/s40863-022-00333-9.}


\bibitem{BS}{M. P. Brodmann and R. Y. Sharp, Local cohomology, An algebraic introduction with geometric applications, Cambridge Studies in Advanced Mathematics {\bf 136}, 2nd edition,  Cambridge University Press, 1993.}

\bibitem{BH}{W. Bruns and J. Herzog, Cohen-Macaulay Rings, Cambridge University Press, 2013.}


\bibitem{Dey-Takahashi}{S. Dey and R. Takahashi}, On the subcategories of $n$-torsionfree modules and related modules, Collect. Math., {\bf 74} (2023), no. 1, 113--132.

\bibitem{EG85}{E. G. Evans and P. Griffith, Syzygies, London Mathematical Society Lecture Note Series, {\bf 106}, Cambridge University Press, Cambridge, 1985.}


\bibitem{FFGR}{R. Fossum, H.-B. Foxby, P. Griffith, I. Reiten, Minimal injective resolutions with applications to dualizing modules and Gorenstein modules, Inst. Hautes \'{E}tudes Sci. Publ. Math., {\bf 45} (1975), 193--215.}

\bibitem{F1}{H.-B. Foxby, Gorenstein modules and related modules, Math. Scand., {\bf 31} (1972), 267--284. }

\bibitem{F}{H.-B. Foxby, $n$-Gorenstein rings, Proc. Amer. Math. Soc., {\bf 42} (1974), 67--72. }





\bibitem{GMT}{S. Goto, N. Matsuoka, and T. T. Phuong, Almost Gorenstein rings, J. Algebra, {\bf 379} (2013), 355--381.}

\bibitem{GW}{S. Goto and K.-i. Watanabe, On graded rings I, J. Math. Soc. Japan, {\bf 30} (1978), no. 2, 179--213.}


\bibitem{HK}{J. Herzog and E. Kunz, Der kanonische Modul eines Cohen-Macaulay-Rings, Lecture Notes in Mathematics, 238, Springer-Verlag, Berlin-New York, 1971.}

\bibitem{K}{E. Kunz, The value-semigroup of a one-dimensional Gorenstein ring, Proc. Amer. Math. Soc., {\bf 25} (1970), 748--751.}



\bibitem{Re}{I. Reiten, The converse to a theorem of Sharp on Gorenstein modules, Proc. Amer. Math. Soc., {\bf 32} (1972), 417--420.}


\bibitem{Sch}
{P. Schenzel, Zur lokalen Kohomologie des kanonischen Moduls. \textit{Math. Z.} \textbf{165} (1979), no. 3, 223--230.}


\bibitem{S2}{R. Y. Sharp, On Gorenstein modules over a complete Cohen-Macaulay ring, Quart. J. Math., {\bf 22}, no. 3 (1971), 425--434.}

\bibitem{St}{R. Stanely, Hilbert functions of graded algebras, Adv. Math., {\bf 28} (1978), no. 1, 57--83.}

\bibitem{St2}{R. Stanley, Combinatorics and commutative algebra, Second Edition, Birkh{\" a}user, Boston, 1996.}

\bibitem{V1}{W. V. Vasconcelos, Reflexive modules over Gorenstein rings, Proc. Amer. Math. Soc.,  {\bf 19} (1968), 1349--1355.}

\bibitem{V2}{W. V. Vasconcelos, Integral Closure. Rees algebras, multiplicities, algorithms. Springer Monographs in Mathematics. Berlin, Springer-Verlag, 2005.}

\end{thebibliography}
\end{document}